\documentclass[a4paper,twoside,11pt]{article}

\usepackage{BeaCedArticle}

\title{Principal minors Pfaffian half-tree theorem}
\author{B\'eatrice de Tili\`ere
\thanks{{\small
Laboratoire de Probabilit\'es et Mod\`eles Al\'eatoires, UMR 7599, Universit\'e
Pierre et Marie Curie, 4 place Jussieu, 
F-75005 Paris.}
{\small\texttt{beatrice.de\_tiliere@upmc.fr.}}
{\small Supported by the ANR Grant 2010-BLAN-0123-02.}
}}
\date{}

\begin{document}

\maketitle

\begin{abstract}
A half-tree is an edge configuration whose superimposition with a perfect
matching is a tree. In this paper, we prove a half-tree theorem for the Pfaffian
principal minors of a skew-symmetric matrix whose column sum is zero;
introducing an explicit algorithm, we fully characterize half-trees
involved. This question naturally arose in the context of
statistical mechanics where we aimed at relating perfect matchings and
trees on the same graph.
As a consequence of the Pfaffian half-tree theorem, we obtain a refined version
of the matrix-tree theorem in the case of skew-symmetric matrices, as well as a
line-bundle version of this result.\\

\textbf{Keywords}: Pfaffian, half-trees, perfect matchings, Matrix-tree
theorem.
\end{abstract}

\section{Introduction}

We prove a half-tree theorem for the Pfaffian principal minors of
a skew-symmetric matrix whose column sum is zero. This is a
Pfaffian version of the classical matrix-tree theorem
\cite{Kirchhoff}, see also \cite{Chaiken} and references therein. Introducing
an explicit algorithm, we give a constructive proof of our result
and a full characterization of half-trees
involved. A precise
statement of our main theorem, as well as consequences for the determinant, are
given in Section \ref{sec:11} of the introduction. An outline of the paper is
provided in Section
\ref{sec:13}. Motivations for proving such a result come from statistical
mechanics and are
exposed in Section \ref{sec:12}.

\subsection{Statement of main result}\label{sec:11}

Let $V^R=V\cup R$, where $V=\{1,\dots,n\}$,
$R=\{n+1,\dots,n+r\}$ and $n$ is even.
Let $A^R=(a_{ij})_{\{i,j\,\in V^R\}}$ be a
skew-symmetric matrix whose column sum is zero, \emph{i.e.} satisfying
$
\forall\,i\in V^R,\; \sum_{j\in V^R}a_{ij}=0.
$
Denote by $A=(a_{ij})_{\{i,j\,\in V\}}$ the matrix obtained from $A^R$ by
removing the
$r$ last lines and columns. The matrix $A$ is also skew-symmetric and the
\emph{Pfaffian} of $A$, denoted $\Pf(A)$, is defined as:
\begin{equation*}
\Pf(A)=\frac{1}{2^{\frac{n}{2}}\bigl(\frac{n}{2}\bigr)!} 
\sum_{\sigma\in\Sn}\sgn(\sigma)a_{\sigma(1)\sigma(2)}\dots
a_{\sigma(n-1)\sigma(n)},
\end{equation*}
where $\Sn$ is the set of permutations of $\{1,\dots,n\}$. Using the
skew-symmetry of the matrix $A$ it is possible to avoid summing over all
permutations. Let $\Pn$ be the set of partitions of
$\{1,\dots,n\}$ into $n/2$ unordered pairs, also known as the set
of \emph{pairings}. A permutation $\sigma\in\Sn$ is a \emph{description} of a
pairing $\pi\in\Pn$ if $\{\sigma(1)\sigma(2),\dots,\sigma(n-1)\sigma(n)\}$
represents the pairing. A pairing $\pi$ is described by
$2^{\frac{n}{2}}\bigl(\frac{n}{2}\bigr)!$ permutations: there are
$2^{\frac{n}{2}}$ ways of ordering elements of the pairs and
$\bigl(\frac{n}{2}\bigr)!$ ways of ordering pairs among themselves. Because of
the skew-symmetry of the matrix $A$, the quantity:
$$
\sgn(\sigma)a_{\sigma(1)\sigma(2)}\dots
a_{\sigma(n-1)\sigma(n)},
$$
is independent of the choice of permutation $\sigma$ describing a given
pairing $\pi$. Indeed, choosing another permutation amounts to exchanging
elements of a pair or exchanging pairs. The first operation changes the
sign of the permutation, which is compensated by the change of sign in the
corresponding matrix element. The second operation does not change the
sign, and only changes the order of the matrix elements. As a consequence, the
Pfaffian can be rewritten as:
\begin{align*}
\Pf(A)=\sum_{\pi\in\Pn}\sgn(\sigma_\pi)a_{\sigma_\pi(1)\sigma_\pi(2)}\dots
a_{\sigma_\pi(n-1)\sigma_\pi(n)},
\end{align*} 
where $\sigma_\pi$ is any of the $2^{\frac{n}{2}}\bigl(\frac{n}{2}\bigr)!$
permutations describing the pairing $\pi$. If $n$ is odd, then by convention
$\Pf(A)=0$.

To the matrix $A^R$ one associates the graph $G^R=(V^R,E^R)$, 
where $E^R=\{ij:\,i,j\in V^R,\,a_{ij}\neq
0\}$. Every oriented edge
$(i,j)$ of $G^R$ is assigned a weight $a_{ij}$, thus defining a
skew-symmetric weight function on oriented edges. The matrix $A^R$ is
the \emph{weighted adjacency matrix} of the graph $G^R$. 

A \emph{spanning forest} of $G^R$
is an oriented edge configuration of $G^R$, spanning vertices of
~$V$, such that each connected component is a tree containing
exactly one vertex of
$R$. This vertex is taken to be the root and edges of the component are oriented
towards it. Equivalently, a spanning forest of $G^R$ is an oriented edge
configuration containing no cycle, such 
that each vertex of $V$ has exactly one outgoing edge of
the configuration. A \emph{leaf} of a spanning forest is a vertex with no
incoming edge.

Let $G=(V,E)$ be the graph naturally associated to the matrix $A$.
A \emph{perfect
matching} $M_0$ of $G$ is a subset of edges such that each vertex of $V$ is
incident
to exactly one edge of $M_0$. Note that a perfect
matching of $G$ contains exactly $|V|/2$ edges. In the whole of this paper, we
suppose that $G$ has at least one perfect matching; if this is not the case,
then $\Pf(A)=0$ (see also Section \ref{sec:sec21}). We let $\M$ denote the
set of perfect matchings. 

Let $F$ be a spanning forest of $G^R$, then $F$ is said to be
\emph{compatible} with $M_0$ if it consists of the $|V|/2$ edges of $M_0$ and
of $|V|/2$ edges of $E^R\setminus M_0$. The oriented edge configuration
$F\setminus M_0$ is referred to as a \emph{half-spanning forest}. In the
specific case
where $R$ is reduced to a point, then $F$ is a tree and $F\setminus M_0$ is
referred to as a \emph{half-tree}.
 
\textsc{Example}. Let $V^R=\{1,2,3,4,5\}$, $V=\{1,2,3,4\}$, $R=\{5\}$. Consider
the graphs $G^R$ and $G$ pictured in Figure \ref{fig:fig6}
below. A choice of perfect matching $M_0$ of $G$ is pictured in white, and
$F_1,F_2,F_3$
are examples of spanning trees of $G^R$ compatible with~$M_0$.

\begin{figure}[ht]
\begin{center}
\includegraphics[height=2.5cm]{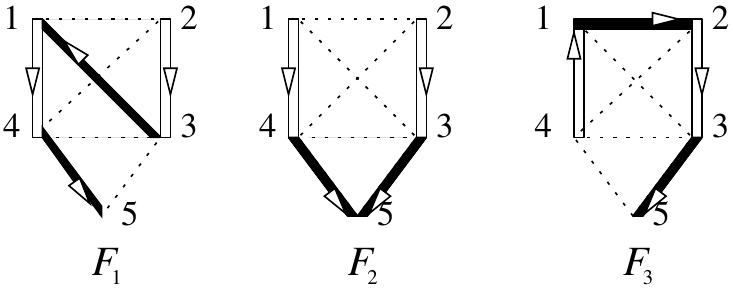} 
\caption{Spanning trees compatible with $M_0$.}\label{fig:fig6}
\end{center}
\end{figure}

Here is the statement of our main theorem.
\begin{thm}[Principal minors Pfaffian half-tree theorem]\label{thm:main}
Let $A^R$ be a skew-symmetric matrix of size $(n+r)\times (n+r)$, whose column
sum is zero, such that $n$ is even; and let $A$ be the matrix obtained from
$A^R$ by removing the $r$ last lines and columns. Let $G^R$ and $G$ be the
graphs naturally constructed from the matrices $A^R$ and $A$, respectively.

For every perfect matching $M_0$ of $G$, the Pfaffian of $A$ is equal to:
$$
\Pf(A)=\sum_{F\in \F(M_0)}\sgn(\sigma_{M_0(F\setminus M_0)})\prod_{e\in
F\setminus M_0}a_{e},
$$ 
where $a_e$ is the coefficient of the matrix $A^R$ corresponding to the
oriented edge $e$; 
$\sgn(\sigma_{M_0(F\setminus M_0)})$ is the sign of the
permutation $\sigma_{M_0(F\setminus M_0)}$ of Definition \ref{def:def2} below;
$\F(M_0)$ is the set of
spanning forests of $G^R$
compatible with $M_0$, satisfying Condition \emph{(C)} of Definition
\ref{def:def0} below.
\end{thm}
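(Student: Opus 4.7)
My plan is to start from the pairing expansion
$\Pf(A)=\sum_{\pi\in\Pn}\sgn(\sigma_\pi)a_{\sigma_\pi(1)\sigma_\pi(2)}\dots a_{\sigma_\pi(n-1)\sigma_\pi(n)}$,
fix the perfect matching $M_0$, and study the superposition of $\pi$ and $M_0$.
For each pairing $\pi$, this superposition decomposes into doubled $M_0$-edges
(when $\pi$ contains an $M_0$-edge) and $M_0$-alternating even cycles; this
groups the Pfaffian terms according to their cycle structure relative to $M_0$
and allows me to isolate the ``tree-like'' contributions from the
``cycle-like'' ones.

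I would next exploit the column-sum-zero hypothesis via the key identity
$a_{ij}=-\sum_{k\in V^R\setminus\{j\}}a_{ik}$, valid for every $i\in V$.
Applied at a canonical edge of each alternating cycle, this identity opens
the cycle into a path, which either closes back (producing a new, smaller
cycle) or terminates at a vertex of $R$. Iterating this expansion across all
cycles rewrites $\Pf(A)$ as a signed weighted sum over oriented edge
configurations in $G^R$ whose non-$M_0$ part is either a half-spanning
forest or a half-spanning forest together with residual alternating cycles.
The algorithm announced in Definition \ref{def:def2} is what formalizes
this expansion process and records the resulting sign as
$\sgn(\sigma_{M_0(F\setminus M_0)})$.

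To conclude, I would construct a sign-reversing involution on those
configurations that still contain an alternating cycle after expansion:
for each such configuration, I locate the lex-minimal cycle with respect
to the order used by the algorithm and perform a canonical local edge swap
along it. This swap reverses the sign of the Pfaffian contribution while
remaining inside the same ``cycle-containing'' subset, so these
contributions cancel pairwise. The surviving terms correspond to spanning
forests of $G^R$ compatible with $M_0$, and the ones actually singled out
by Condition~(C) of Definition \ref{def:def0} are precisely those produced
canonically by the algorithm, yielding the claimed formula.

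The main obstacle will be the sign bookkeeping: tracking how
$\sgn(\sigma_\pi)$ transforms under each column-sum substitution and under
the local swap defining the involution, and checking that the cumulative
effect exactly matches $\sgn(\sigma_{M_0(F\setminus M_0)})$. The
combinatorial skeleton --- that every forest satisfying~(C) is produced
exactly once and that each cycle-containing configuration is paired with a
unique partner of opposite sign --- becomes routine once the involution is
correctly set up, but the delicate work lies in verifying that the signs
produced by the permutation describing $\pi$, the sequence of column-sum
substitutions, and the local swap all fit together consistently.
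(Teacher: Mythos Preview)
Your high-level architecture matches the paper's: expand $\Pf(A)$ over matchings $M$, superpose with $M_0$, use the column-sum identity to rewrite, then cancel the cycle-carrying configurations. But you have the central operation reversed. In the paper the column-sum identity is applied to the \emph{doubled edges} of $M_0\cup M$ (the length-$2$ components where $M$ and $M_0$ agree), not to the alternating cycles of length $\geq 4$. Opening a doubled edge $\{\ell,\ell'\}$ replaces the $M$-factor $a_{\ell'\ell}$ by $-\sum_{\ell_2}a_{\ell'\ell_2}$, producing the length-$2$ path $\ell\to\ell'\to\ell_2$; iterating this grows the doubled edges into the \emph{branches} of an $RC$-rooted spanning forest, while the length-$\geq 4$ cycles of $M_0\cup M$ are left untouched and become the cycles of that forest (together with new cycles that may be created when an opened path closes back on itself). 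Your plan to open the alternating cycles and leave the doubled edges alone cannot produce half-spanning forests compatible with $M_0$: the extreme case $M=M_0$ (all doubled edges, no cycles) would yield a single unexpanded term under your scheme, whereas in the actual proof this is the input generating the richest family of half-forests.

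Your cancellation intuition is sound but the mechanism is not a local edge swap on the forest. The paper shows that each $RC$-rooted spanning forest $F$ with $k_F\geq 1$ cycles satisfying Condition~(C) arises from exactly $2^{k_F}$ distinct initial matchings $M^{(\eps_1,\dots,\eps_{k_F})}$, one for each choice of which cycles of $F$ were already present in $M_0\cup M$ versus created by the opening procedure; the corresponding weights differ only by $(-1)^{\sum_i\eps_i}$, so they sum to $(1-1)^{k_F}=0$. This can be recast as a sign-reversing involution (flip one $\eps_i$), but the swap lives on the preimage matching $M$, not on $F$. The sign bookkeeping you flag as the main obstacle is handled in the paper by tracking $\sgn(\sigma_{M_0(\cdot)})$ together with the parity $(-1)^{|\D|}(-1)^{|\C|}$ through each opening step, and it is routine once the identity is applied at the correct (doubled) edges.
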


\begin{defi}\label{def:def2}
Let $F$ be a spanning forest of $G^R$ compatible with $M_0$. The orientation of
$F$
induces an orientation of edges of the perfect matching $M_0$, and we let
$(i_1,i_2),\dots,(i_{n-1},i_n)$ be a description of the oriented matching.
Then, $\sigma_{M_0(F\setminus M_0)}$ is the permutation:
$$
\sigma_{M_0(F\setminus M_0)}=
\begin{pmatrix}
1&2&\dots&n\\ 
i_1&i_2&\dots&i_n
\end{pmatrix}.
$$
Note that the interchange of two pairs does not change the sign of the
permutation.
\end{defi}

Here is the algorithm used to characterize half-spanning forests which
contribute to $\Pf(A)$.

\textbf{Trimming algorithm}

\underline{Input}: a spanning forest $F$ of $G^R$ compatible with $M_0$.

\underline{Initialization}: $F_1=F$.

\underline{Step $i$, $i\geq 1$}

Since vertices of the graph $G^R$ are labeled by $\{1,\dots,n+r\}$,
there is a natural order on vertices of any subset of $V^R$. We let
${\ell}_1^i$ be the leaf of
$F_i$ with the largest label, and consider the connected component containing
${\ell}_1^i$.
 Start from ${\ell}_1^i$ along the unique path joining ${\ell}_1^i$ to the root
of this component, until the first time one of the following vertices is
reached:
\begin{itemize}
\item[-] the root vertex,
\item[-] a fork, that is a vertex with more than one
incoming
edge,
\item[-] a vertex which is smaller than the leaf ${\ell}_1^i$.
\end{itemize}
This yields a loopless path $\lambda_{{\ell}_1^i}$ starting from ${\ell}_1^i$,
of length
$\geq 1$. Let $F_{i+1}=F_i\setminus \lambda_{{\ell}_1^i}$. If $F_{i+1}$ is
empty,
stop; else go to Step $i+1$.

\underline{End}: since edges are removed at every step, and since $F$ contains a
finite
number of edges, the trimming algorithm ends in finite time $N$.

\begin{defi}\label{def:def0}
A spanning forest $F$ compatible with $M_0$ is said to satisfy
\emph{Condition}~(C) if each of the paths
$\lambda_{{\ell}_1^1},\dots,\lambda_{{\ell}_1^N}$ obtained
from the trimming algorithm, starts from an edge of $M_0$ and has even
length. We let $\F(M_0)$ be the set of spanning forests compatible with $M_0$,
satisfying
Condition (C). 
\end{defi}

\textsc{Example}. Applying the trimming algorithm to each of the spanning
forests
$F_1,F_2,F_3$ of
Figure~\ref{fig:fig6} yields:
\begin{align*}
F_1:&\quad\text{Step 1: }{\ell}_1^1=2,\lambda_{2}=2,3,1.
\quad\text{Step 2: }{\ell}_1^2=1,\lambda_1=1,4,5.\\
F_2:&\quad\text{Step 1: }{\ell}_1^1=2,\lambda_2=2,3,5.
\quad\text{Step 2: }{\ell}_1^2=1,\lambda_1=1,4,5.\\
F_3:&\quad\text{Step 1: }{\ell}_1^1=4,\lambda_4=4,1.
\quad\quad\text{Step 2: }{\ell}_1^2=1,\lambda_1=1,2,3,5.
\end{align*}
The spanning trees $F_1$ and $F_2$ satisfy Condition (C) but not
$F_3$.\\

\begin{rem}$\,$
\begin{itemize}
\item It is interesting to note that taking different perfect matchings $M_0$
yields different families of half-spanning forests. It is not clear a priori,
without using the Pfaffian half-tree theorem, that these families should have
the same total weight.
\item Suppose that we change the labeling of the vertices. Let
$\tilde{A^R}$ be the corresponding skew-symmetric adjacency matrix, and
$\tilde{A}$ be the matrix obtained by removing the $r$ last lines and columns.
As long as the re-labeling does not affect vertices of $R$, the matrix
$\tilde{A}$ is obtained from the matrix $A$ by exchanging lines and columns, so
that $\Pf(\tilde{A})=\pm\Pf(A)$. Applying the Pfaffian half-tree theorem to
the matrices $A$ and $\tilde{A}$ nevertheless yields a different
set of half-spanning forests and again, it is
not clear a priori that they should have the same total weight in absolute
value. Note that taking other principal minors amounts to changing the labeling
of the vertices.
\item In the paper \cite{MasbaumVaintrob}, Masbaum and Vaintrob assign to a
weighted 3-uniform hypergraph a skew-symmetric matrix whose column sum
is zero, and prove that the Pfaffian of any principal minor of this matrix
enumerates signed spanning trees of the 3-uniform hypergraph. The matrix
considered by Masbaum and Vaintrob satisfies the assumptions of Theorem
\ref{thm:main}, implying that the Pfaffian half-tree theorem can also be used.
This naturally raises
the question of possible connections between spanning trees of 3-graphs and
half-spanning trees of Theorem \ref{thm:main}. A detailed account of this
question, illustrated by examples, is provided in Appendix \ref{App:AppendixA}.
Our conclusion is that both theorems can be seen as related to half-spanning
trees, but the latter are of a very different nature. The Pfaffian half-tree
theorem takes its full meaning for (regular) graphs. It can also
be applied for 3-graphs, but the result obtained in that case is rather
different from the one of Masbaum and Vaintrob, and not naturally connected to
spanning trees of 3-graphs.
\end{itemize}
\end{rem}

Using the fact that the determinant of a skew-symmetric matrix is the square of
the Pfaffian, we obtain the following corollary.

\begin{cor}\label{cor:main}
Let $A^R$ be a skew-symmetric matrix of size $(n+r)\times (n+r)$, whose column
sum is zero, such that $n$ is even;  and let $A$ be the matrix obtained from
$A^R$ by removing the $r$ last lines and columns. Let $G^R$ and $G$ be the
graphs naturally constructed from the matrices $A^R$ and $A$ respectively.
 
The determinant of the matrix $A$ is equal to:
\begin{equation*}
\det(A)=\sum_{M_0\in\M}\sum_{F\in\F(M_0)}\prod_{e\in F}a_e,
\end{equation*}
where $a_e$ is the coefficient of the matrix $A^R$ corresponding to the
oriented edge $e$, and $\F(M_0)$ is the set of spanning forests compatible with
$M_0$, satisfying Condition \emph{(C)}.
\end{cor}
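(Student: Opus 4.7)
The plan is to square the identity provided by Theorem \ref{thm:main}, exploiting the classical fact that $\det(A) = \Pf(A)^2$ for any skew-symmetric matrix $A$. The idea is to expand one copy of $\Pf(A)$ using the Pfaffian definition as a signed sum over perfect matchings of $G$, and the second copy using Theorem \ref{thm:main} applied, for each matching $M_0$ appearing in the first expansion, to that specific $M_0$.

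The first expansion follows from the Pfaffian formula recalled in Section \ref{sec:11}, once one observes that any pairing containing a pair $ij\notin E$ contributes~$0$, so that the sum over $\Pn$ may be restricted to perfect matchings of $G$:
$$\Pf(A) \;=\; \sum_{M_0 \in \M} \sgn(\sigma_{M_0}) \prod_{e \in M_0} a_e.$$
Substituting this into $\det(A) = \Pf(A)^2$ and using Theorem \ref{thm:main} for the remaining factor of $\Pf(A)$ with the same matching $M_0$ yields
$$\det(A) \;=\; \sum_{M_0 \in \M}\, \sum_{F \in \F(M_0)} \sgn(\sigma_{M_0})\, \sgn(\sigma_{M_0(F\setminus M_0)}) \prod_{e \in M_0} a_e \prod_{e \in F\setminus M_0} a_e.$$

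The heart of the argument is then to verify, for each pair $(M_0,F)$, that the two sign factors multiply to $+1$ and the two edge-weight products combine into $\prod_{e\in F} a_e$ where $F$ carries its canonical orientation towards the roots. I would exploit the fact, recalled in Section \ref{sec:11}, that the quantity $\sgn(\sigma_{M_0}) \prod_{e \in M_0} a_e$ is independent of the orientation chosen for the pairs of $M_0$. This freedom lets me orient the pairs of $M_0$ in the first factor exactly as they are oriented by $F$; with this common orientation, $\sigma_{M_0}$ and $\sigma_{M_0(F\setminus M_0)}$ are literally the same permutation, so their signs square to $+1$, while the product of matrix entries along $M_0$ is computed with the orientation induced by $F$, turning $\prod_{e \in M_0} a_e\,\prod_{e \in F\setminus M_0} a_e$ into $\prod_{e \in F} a_e$.

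The only delicate step is this final sign-and-orientation bookkeeping; everything else is a direct substitution, since Theorem \ref{thm:main} has already carried out the essential combinatorial work of characterizing the contributing forests via Condition~(C).
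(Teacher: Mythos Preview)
Your proposal is correct and follows essentially the same route as the paper: square the Pfaffian, expand one factor as a signed sum over matchings $M_0$, apply Theorem~\ref{thm:main} to the other factor with that same $M_0$, and then use the freedom in orienting the pairs of $M_0$ to align $\sigma_{M_0}$ with $\sigma_{M_0(F\setminus M_0)}$ so the signs square to~$1$ and the edge-products merge into $\prod_{e\in F}a_e$. The paper's only cosmetic difference is that it passes through the intermediate expression of Equation~\eqref{equ:pfaffian} before invoking Theorem~\ref{thm:main}.
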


\begin{rem}$\,$
\begin{itemize}
\item The fact that principal minors of a skew-symmetric matrix whose column
sum is zero, count spanning forests is also a consequence of the more
general all-minors
matrix-tree theorem (which holds for any matrix whose column sum is zero). A
combinatorial way of proving this result is 
to use the explicit expansion of
configurations due to Chaiken \cite{Chaiken}. This method is not 
satisfactory in our context, since it does not shed a light on how spanning
forests are
obtained from double perfect matchings, which is what we aim for, see Section
\ref{sec:12}. Indeed, the idea of
Chaiken's proof is to expand terms on the diagonal of the matrix and show that
only spanning forests remain. In the case of skew-symmetric matrices, since
diagonal terms are $0$ this amounts to
`artificially' creating configurations which do not exist. As a result of our
proof, we explicitly construct spanning forests
from
double perfect matchings, and identify a specific family of spanning forests
counted by principal minors. In particular, this implies that in the case of
skew-symmetric matrices, specific cancellations occur within spanning forests of
the general matrix-tree theorem, a fact hard to establish without using
Corollary \ref{cor:main}. 
\item An intrinsic definition of $\cup_{M_0\in\M}\F(M_0)$, not using reference
perfect matchings, is given in Remark \ref{rem:main} of Section \ref{sec:sec32}.
\item A line bundle version of this result, in the spirit of \cite{Forman} and
\cite{KenyonVectorBundle}, is proved in Section~\ref{sec:sec33}, see Corollary
\ref{thm:linebundle}.
\end{itemize}
\end{rem}

\subsection{Outline of the paper}\label{sec:13}

\begin{itemize}
\item \underline{Section \ref{sec:sec2}}. In Section \ref{sec:sec21}, we state
the interpretation of the Pfaffian as counting signed perfect matchings of the
graph $G$. Fixing a reference perfect matching $M_0$, we then introduce an
explicit algorithm, which constructs from the superimposition of $M_0$ and a
generic perfect matching $M$ counted by the Pfaffian, a family of
half RC-spanning
forests whose connected components are trees rooted on vertices of $R$, or on
cycles
of even length $\geq 4$; and whose total weight is equal to the contribution of
$M$ to the Pfaffian. The main tool of the algorithm is the `opening' of doubled
edges procedure, described in Section \ref{sec:sec23}. Step 1 of the algorithm
is exposed in Section \ref{sec:sec24}, and the complete algorithm is the subject of
Section \ref{sec:sec25}. A characterization of configurations obtained is given
in Section \ref{sec:sec26}. 
\item \underline{Section 3}. Section \ref{sec:sec31} consists in
the proof of Theorem
\ref{thm:main}. The idea is to show that the contribution of half RC-spanning
forests constructed above, having connected components rooted on cycles of
length $\geq 4$ cancel, and that only the contribution of
spanning forests (rooted on vertices of $R$) remains. The characterization of
configurations obtained from the algorithm is also simplified in the case of
spanning forests, yielding the trimming algorithm of Section \ref{sec:11} of the
introduction. The proof of Corollary~\ref{cor:main} is the subject of 
Section \ref{sec:sec32}. Finally, in Section \ref{sec:sec33}, Corollary
\ref{thm:linebundle} proves a line bundle version of the matrix-tree theorem for
skew-symmetric matrices of Corollary \ref{cor:main}.
\end{itemize}

\subsection{A question from statistical mechanics}\label{sec:12}

As stated in the introduction, the Pfaffian half-tree theorem \ref{thm:main} is
a Pfaffian version of the classical matrix-tree theorem of Kirchhoff. One of its
interesting features is that half-trees involved satisfy specific conditions
characterized by the trimming algorithm, allowing for a refinement of the
matrix-tree theorem in the case of skew-symmetric matrices. As such, the
Pfaffian half-tree theorem is a standalone result. It nevertheless answers a
question raised when working on the paper~\cite{deTiliereCRSF} in
the field of statistical mechanics. In the paper~\cite{deTiliereCRSF} we prove
an explicit
relation, on the level of configurations, between two models of statistical
mechanics: the dimer model on the Fisher graph corresponding to the low
temperature expansion of the critical Ising model (through
Fisher's correspondence \cite{Fisher}), and spanning forests. The
question raised does not rely on the Fisher graph and
can be rephrased in the following, more general framework.

In the setting of statistical mechanics, a perfect matching of a graph is known
as a \emph{dimer configuration}. Assigning non-negative weights to edges of
the graph naturally defines a weight for each dimer configuration (by taking the
product of the edge-weights present in the configuration) and a probability
measure on all dimer configurations of $G$, thus yielding a statistical
mechanics model. The dimer model on planar graphs has been the subject of
extensive studies in the last 50 years, and of huge progresses in the last 15
years, see \cite{KenyonLectures} for an overview. 

A \emph{double dimer configuration} is the superimposition of two dimer
configurations. It consists of a collection of disjoint cycles covering all
vertices of the graph. This is because, by definition of a dimer configuration,
each vertex is incident to exactly one edge of each of the two dimer
configurations, so that in the superimposition, each vertex has degree exactly
two. Our goal is to explicitly construct spanning forests from double
dimer configurations when the model is critical, and to do so on the same
graph, thus proving an unexpected relation, on the level of configurations,
between two models of statistical mechanics. This relation is unexpected because
configurations of the first model consist of cycles, and those of the second
contain no cycle, so that they appear to be of a very different
nature.

When the graph is planar, dimer configurations are counted by the Pfaffian of
the \emph{Kasteleyn matrix}
\cite{Kasteleyn,TemperleyFisher}, which is a weighted adjacency matrix of an
oriented version of the graph; this matrix is skew-symmetric by construction.
It is a general fact that the Pfaffian of an adjacency matrix counts signed
perfect matchings. Signs of perfect matchings come from coefficients of the
matrix and from the signs of permutations naturally assigned to matchings,
see Section \ref{sec:sec21}. The contribution of
\cite{Kasteleyn,TemperleyFisher} is to prove that the orientation of the graph
can be chosen so that signs cancel, implying that all perfect matchings
appear with the same sign. The square of the Pfaffian of the Kasteleyn matrix,
which is the determinant of the matrix, counts double dimer configurations: when
expanding the product, each term consists of two dimer configurations, their
superimposition is a double dimer configuration. 

In the case of the dimer model corresponding to the
critical Ising model, the column sum of the Kasteleyn matrix is zero (when the
graph is embedded on the torus), a fact related to the model being
\emph{critical}. Let us give a little hint at what criticality is.
The Ising model is a model of ferro-magnetism: a magnet is
represented by a graph, vertices of the graph can take
two possible values $\pm1$, and an external temperature influences the system.
When the temperature is zero, all spins are equal to $+1$ or
$-1$; and when the temperature is very high, the configuration is completely
random. At a specific temperature, referred to as the \emph{critical} one, the
system undergoes a phase transition and has a very interesting and rich
behavior, see \cite{ChelkakSmirnov:ising}. In the dimer
interpretation of the Ising model \cite{Fisher}, being critical is 
related to the fact that a certain polynomial in two complex variables has
zeros on the unit torus \cite{Li:critical,CimasoniDuminil}. This polynomial is
the determinant of a modified weight Kasteleyn matrix, and it has zeros on the
unit torus precisely when the column sum of the matrix is zero. This motivates
our choice of taking column-sum equal to zero. 

Our initial question which was constructing spanning forests from double dimer
configurations when the model is critical thus translates into: given a
Kasteleyn matrix whose column sum is zero, how are spanning forests obtained
from double dimer configurations counted by the determinant of the matrix. It
turned out that the only feature required of the Kasteleyn matrix is that of
being skew-symmetric, the specific orientation of the graph did not play a role,
thus taking us away from the setting of statistical mechanics. The question
thus transformed into: how are spanning forests obtained from the signed
superimpositions of perfect matchings counted by the determinant of a
skew-symmetric matrix whose column sum is zero. We obtained more than what we
expected, since we have a result on the Pfaffian. Theorem \ref{thm:main} proves
that principal minors of the Pfaffian of a general skew-symmetric matrix whose column
sum is zero
count a specific family of half-spanning forests, and half-spanning forests
are explicitly constructed from perfect matchings.
 Corollary \ref{cor:main} proves that principal minors of the determinant of
such a matrix count a family of spanning forests, and the latter are explicitly
constructed from superimposition of perfect matchings. Specifying this
result to the case of planar graphs or graphs embedded on the torus, and
Kasteleyn matrices, answers
our initial question. The main drawback
of our result in the context of statistical mechanics is that, even when the
matrix is Kasteleyn and perfect matchings all have positive weights,
corresponding spanning forests might have negative weights.

To close this section on statistical mechanics, let us also mention the work of 
Temperley \cite{Temperley}, Kenyon, Propp and Wilson \cite{KPW} proving that
spanning trees of planar graphs are in bijection with dimer configurations of a
related bipartite graph. The proof consists in a one-to-one correspondence
between configurations. Although their result involves the same kind of objects,
the two are quite different in spirit. In our case, perfect matchings and trees
live on the same graph, the graph must not be bipartite nor even planar, the
weight function on edges of the graph must not be positive, but the column sum
must be zero.

\section{From matchings to half $RC$-rooted spanning forests}\label{sec:sec2}

Let us recall the setting: $A^R$ is a skew-symmetric matrix of size
$(n+r)\times (n+r)$, whose column sum is zero, such that $n$ is even; and $A$ is
the matrix obtained from
$A^R$ by removing the $r$ last lines and columns; $G^R$ and $G$ are the graphs
naturally constructed from the matrices $A^R$ and $A$ in Section \ref{sec:11}
of the introduction. 

\begin{defi}\label{def:def1}
An \emph{$RC$-rooted spanning forest}, referred to as an RCRSF is
an oriented edge
configuration of $G^R$
spanning vertices of $G$, such that each connected component is, either a tree
rooted on a vertex of $R$, or a tree rooted on a cycle of $G$, which we refer to
as a \emph{unicycle}. Edges of each of the components are oriented towards its
root, and edges of the cycles are oriented in one of the two possible
directions.
\end{defi}

\begin{defi}
Let $M_0$ be a reference perfect matching of $G$. An RCRSF $F$ is said to be
\emph{compatible with $M_0$}, 
if it consists of
the $|V|/2$ edges of $M_0$, and of $|V|/2$ edges of
$E^R\setminus M_0$. Moreover cycles of uni-cycles have even length
$\geq 4$, and alternate between edges of $M_0$ and $F\setminus M_0$. The
oriented edge configuration $F\setminus M_0$ is referred to as a
\emph{half-RCRSF}.
\end{defi}

In Section \ref{sec:sec21}, we give the graphical interpretation of the
Pfaffian of the matrix $A$ as counting signed perfect
matchings of $G$. Let $M$ be a generic perfect matching
counted  by the Pfaffian and $M_0$ be a fixed
reference perfect matching of $G$. In Sections
\ref{sec:sec24} and \ref{sec:sec25}, we introduce an explicit algorithm which
constructs, from the superimposition of $M_0$ and $M$, a family of
half $RC$-rooted spanning forests
compatible with $M_0$, whose total weight is equal to the contribution
of $M$ to the Pfaffian. In Section \ref{sec:sec26}, we characterize
$RC$-spanning forests obtained.
Notations used are given in
Section~\ref{sec:sec22}. The main graphical idea of the algorithm is the subject
of Section \ref{sec:sec23}.

\subsection{Graphical interpretation of the Pfaffian}\label{sec:sec21}

Recall that $\Pn$ denotes the set of pairings of
$\{1,\dots,n\}$, and let $\M$ be the set of perfect matchings of $G$. Observing
that
every perfect
matching of $G$ corresponds to a pairing of $\Pn$, and that pairings of $\Pn$
which do not correspond to perfect matchings of $G$ contribute $0$ to the
Pfaffian, we can rewrite $\Pf(A)$ as:
$$
\Pf(A)=\sum_{M\in \M}\sgn(\sigma_M)a_{\sigma_M(1)\sigma_M(2)}\dots
a_{\sigma_M(n-1)\sigma_M(n)},
$$
where $\sigma_{M}$ is a permutation such that
$\{\sigma_M(1)\sigma_M(2),\dots,\sigma_M(n-1)\sigma_M(n)\}$ is a description of
the perfect matching $M$.

Choosing the permutation $\sigma_M$ amounts to choosing an order for the $n/2$
pairs and an order for the two elements of each of the pairs, meaning that
there are $(\frac{n}{2})! 2^{\frac{n}{2}}$ choices. Exchanging two pairs does
not change
the sign nor the corresponding coefficients of the matrix, whereas changing
two elements of a pair changes the sign of the permutation and the sign of
the corresponding element of the matrix. As a consequence, the global sign is
unchanged, and fixing the sign of the permutation amounts to choosing an orientation of edges
of the perfect matching. 

We now specify the choice of sign of the permutation $\sigma_M$ by choosing
an orientation of edges of $M$. Let
$M_0$ be a fixed reference matching of $G$. The
superimposition of $M_0$ and $M$, denoted by $M_0\cup M$, consists of disjoint
doubled edges (covered by both $M_0$ and $M$) and alternating cycles of even
length $\geq 4$, covering all vertices of $G$. Let us, for the moment, consider
doubled edges as cycles of length 2. The orientation of the superimposition
$M_0\cup M$ is fixed by the following rule: for each cycle, the orientation is
compatible with that of the edge $({\ell}_1,{\ell}_1')$, where ${\ell}_1$ is the
smallest
vertex of the cycle, and ${\ell}_1'$ is its partner in $M_0$. This yields an
orientation of edges of $M$, and thus a choice of $\sigma_M$. This procedure
also gives an orientation of edges of $M_0$, and thus a choice of
$\sigma_{M_0}$. Let us also denote by $M_0\cup M$ the oriented superimposition.

\textsc{Example (Figure \ref{fig:fig8}}). In white is a choice of reference
perfect matching $M_0$ and in black are the three possible matchings
$M_1,M_2,M_3$ of the
graph $G$. Edges of the respective superimpositions are oriented according
to the rule described above.

\begin{figure}[ht]
\begin{center}
\includegraphics[height=2.5cm]{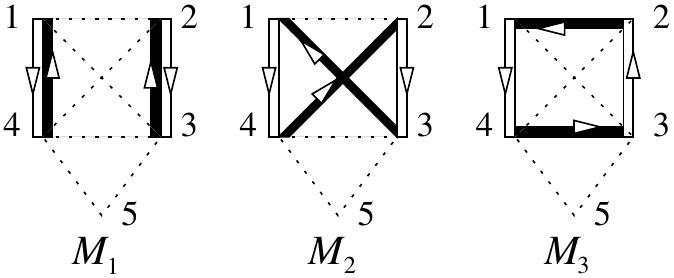}
\caption{Oriented superimposition.\label{fig:fig8}}
\end{center}
\end{figure}

Let $\sigma_{M_0\cup M}$ be the
permutation whose cyclic decomposition corresponds to cycles of the
superimposition $M_0\cup M$. Then,
$$
\sgn(\sigma_{M_0\cup M})=(-1)^{|\D(M_0\cup M)|}(-1)^
{|\C(M_0\cup M)|},
$$
where $\D(M_0\cup M)$ is the set of doubled edges of $M_0\cup M$
and $\C(M_0\cup M)$ is the set of alternating cycles of length $\geq
4$ of $M_0\cup M$. Note that this sign does not depend on the
orientation of the cycles. Following Kasteleyn \cite{Kasteleyn}, the signs of
the permutations $\sigma_{M_0}$ and $\sigma_M$ are related as follows:
\begin{align*}
\sgn(\sigma_M)&=\sgn(\sigma_{M_0})\sgn(\sigma_{M_0\cup M})\\
&=\sgn(\sigma_{M_0})\,(-1)^{|\D(M_0\cup M)|}(-1)^{|\C(M_0\cup M)|}.
\end{align*}
Writing $\sigma_{M_0}$ as $\sigma_{M_0(M)}$ to remember that our
choice of orientation of edges of $M_0$ depends on $M$, the Pfaffian of $A$
can be expressed as:
\begin{equation}\label{equ:pfaffian}
\Pf(A)=\sum_{M\in\M} w_{M_0}(M),
\end{equation}
where
\begin{align*}
w_{M_0}(M)&=\sgn(\sigma_{M_0(M)})(-1)^{|\D(M_0\cup
M)|}(-1)^{|\C(M_0\cup M)|}
\prod_{e\in M}a_{e},
\end{align*}
and $a_e$ is the coefficient of the matrix $A$ corresponding to the oriented
edge $e$ of $M$.

%

\subsection{Notations}\label{sec:sec22}

Let $M_0$ be a fixed reference perfect matching of the graph $G$, $M$ be
a generic perfect matching, and $M_0\cup M$ be the oriented
superimposition of $M_0$ and $M$
constructed in Section \ref{sec:sec21}. In order to shorten notations, we
write $\D$ instead of $\D(M_0\cup M)$ for the set of doubled edges of
the superimposition, $\C$ instead of $\C(M_0\cup M)$ for the set of cycles of
length $\geq 4$, and $w(M)$ instead of $w_{M_0}(M)$.

We now introduce definitions and notations used in the algorithm of Sections
\ref{sec:sec24} and~\ref{sec:sec25}. Let
$V^{\C}$ denote the set of vertices of $V$ which belong to a cycle of $\C$.
For every subset $\D'$ of doubled edges of $\D$, let $V^{\D'}$ denote the set of
vertices of $V$ which belong to doubled edges of $\D'$.

Every vertex $i\in V^{\D}$ belongs to a doubled edge covering vertices
$i$ and $i'$ of $\D$. We denote by $\eb_i$ (or $\eb_i'$) this doubled edge and
define $V_{i}$ to be the set of vertices in the full graph $G^R$, adjacent  to
$i'$ other than $i$.

For every subset $\D'$ of $\D$, denote by
$V_{i}^{\D'}$ the set of vertices of $V_{i}$ which 
belong to doubled edges of $\D'$, and by $(V_i^{\D'})^c$ those which don't. 
Then $V_{i}$ can be
partitioned as: $V_{i}=V_{i}^{\D'}\cup (V_i^{\D'})^c.$

\subsection{Idea of the algorithm}\label{sec:sec23}

The idea of the algorithm is to use the reference configuration $M_0$ as a 
skeleton for opening up doubled edges of the superimposition
~$M_0~\cup~M$. Indeed, because of the condition $\sum_{j\in V^R} a_{ij}=0$, 
configurations of Figure \ref{fig:fig7} have opposite weights.

\begin{figure}[ht]
\begin{center}
\includegraphics[width=10.5cm]{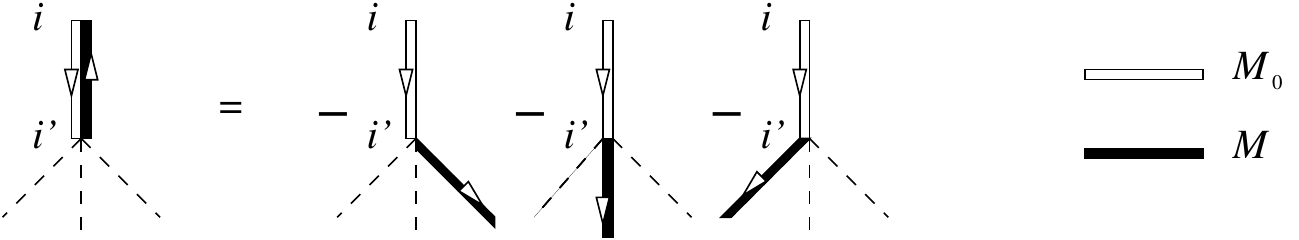}
\caption{`Opening' of doubled edges procedure: $a_{i'i}=-\sum\limits_{j\in
V_i}a_{i'j}$.\label{fig:fig7}}
\end{center}
\end{figure}

There are two main difficulties in realizing this procedure: the first is that
there is, a priori, no natural way of deciding whether to `open' up the
doubled edge at the
vertex $i$ or at the vertex~$i'$. The second is that we want to keep track of
configurations constructed, show that we obtain $RC$-rooted spanning forests,
characterize them and
prove that only spanning forests remain. It turns out that the `opening'
procedure depends strongly on the labeling of vertices.

\subsection{Algorithm: Step 1}\label{sec:sec24}

Recall that the goal of the algorithm is to construct, from the superimposition
$M_0\cup M$ of a reference perfect matching $M_0$ and a generic perfect
matching $M$, a family of half-$RC$-rooted spanning forests of $G^R$ compatible
with $M_0$, whose total weight is equal to the contribution of $M$ to the
Pfaffian. In this section, we introduce the first step of the algorithm,
setting rules for the opening up of doubled edges of $M_0\cup M$. The
complete algorithm, which in essence consists of iterations of Step 1, is the
subject of Section~\ref{sec:sec25}.

\underline{Input}: oriented superimposition $M_0\cup M$.

\underline{Initialization}: if the superimposition $M_0\cup M$ consists of
cycles only, that is if the
set $\D$ is empty, let
${\cal O}_0=\{M\}$ and stop. Else, let ${\cal O}_0=\{\emptyset\}$ and go to the
first
iteration.

\textsc{Example (Figure \ref{fig:fig1}).} Consider Figure \ref{fig:fig1} as
input of the algorithm. The algorithm will be explicitly performed on this
example, throughout Sections \ref{sec:sec24} and \ref{sec:sec25}.

\begin{figure}[ht]
\begin{center}
\includegraphics[height=2cm]{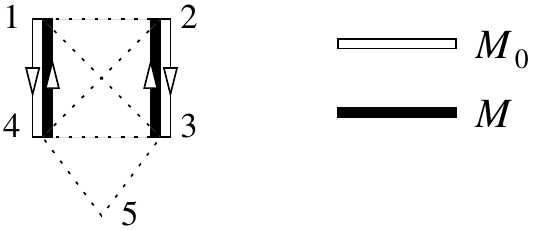} 
\caption{Input $M_0\cup M$ of Step 1.\label{fig:fig1}}
\end{center}
\end{figure}

Since $M_0\cup M$ contains doubled edges, the output ${\cal O}_0$ is
$\{\emptyset\}$.

\underline{Iteration 1}
\begin{itemize}
\item Define ${\ell}_1=\min\{i\in V:\text{ $i$ belongs to a doubled edge of
$\D$}
\}$. Then ${\ell}_1$ is the
partner of a
vertex ${\ell}_1'$ in $M_0$ and $M$. By our choice of orientation for $M_0$ and
$M$, the edge ${\ell}_1{\ell}_1'$ is oriented from ${\ell}_1$ to ${\ell}_1'$ in
$M_0$ and
from ${\ell}_1'$ to ${\ell}_1$ in $M$. For every ${\ell}_2\in V_{{\ell}_1}$,
define:
\begin{align*}
M_{{\ell}_1,{\ell}_2}&=\{M\setminus ({\ell}_1',{\ell}_1)\}\cup
\{({\ell}_1',{\ell}_2)\}\\
w(M_{{\ell}_1,{\ell}_2})&=\sgn(\sigma_{M_0(M)})(-1)^{|\D|-1}(-1)^{|\C|}\prod_{e
\in M_{{\ell}_1,{\ell}_2}}a_{e}.
\end{align*}

\textsc{Example (Figure \ref{fig:fig2})}: ${\ell}_1=1$, $\ell_1'=4$. By
definition, see Section \ref{sec:sec22}, $V_{\ell_1}$ consists of vertices
incident to $\ell_1'=4$ other than $\ell_1=1$, that is,
$V_{{\ell}_1}=V_1=\{2,3,5\}$. This yields configurations $M_{1,2}$,
$M_{1,3}$, $M_{1,5}$.
\begin{figure}[ht]
\begin{center}
\includegraphics[height=2.5cm]{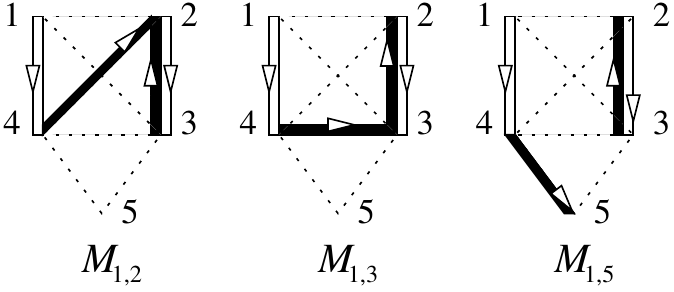} 
\caption{From left to right: black edges are the oriented edge configurations
$M_{1,2}$,
$M_{1,3}$, $M_{1,5}$.\label{fig:fig2}}
\end{center}
\end{figure}

\item Let $D_{{\ell}_1}$ be the set of doubled edges
$\D\setminus\{\eb_{{\ell}_1}\}$.
Then, the set $V_{{\ell}_1}$ can be partitioned as the set of
vertices of $V_{{\ell}_1}$ which belong to a doubled edge of
$\D_{{\ell}_1}$ and the set of those which don't. Using notations of Section
\ref{sec:sec22}, this can be rewritten as:
$V_{{\ell}_1}=V_{{\ell}_1}^{\D_{{\ell}_1}}\cup
(V_{{\ell}_1}^{\D_{{\ell}_1}})^c$.

The output of Iteration 1 is the set of configurations
$M_{{\ell}_1,{\ell}_2}$ such that ${\ell}_2$ does not belong to a doubled edge
of
$\D_{{\ell}_1}$: 
\begin{align*}
{\cal O}_1&=\bigcup_{{\ell}_{2}\in (V_{{\ell}_{1}}^{\D_{{\ell}_1}})^c}
M_{{\ell}_1,{\ell}_2},\\
w({\cal O}_1)&=\sum_{M_{{\ell}_1,{\ell}_2}\in {\cal
O}_1}w(M_{{\ell}_1,{\ell}_2}).
\end{align*}
where by convention, if
$(V_{{\ell}_{1}}^{\D_{{\ell}_1}})^c
=\emptyset$, then ${\cal O}_1=\emptyset$ and $w({\cal O}_1)=0$.

\item The algorithm continues with configurations $M_{{\ell}_1,{\ell}_2}$
where ${\ell}_2$ belongs to a doubled edge of $\D_{{\ell}_1}$. Formally we have:
if
$V_{{\ell}_{1}}^{\D_{{\ell}_1}}=\emptyset$, then stop; else, go to Iteration
$2$.

\textsc{Example}: the set $\D_{{\ell}_1}=\D_{1}$ consists of the doubled edge
$23$.
As a consequence, the set $V_{{\ell}_1}=V_1=\{2,3,5\}$ is partitioned as
$V_1=\{2,3\}\cup\{5\}$, and the output of Iteration $1$ is
${\cal O}_1=\{M_{1,5}\}$. The algorithm continues with $M_{1,2}$ and
$M_{1,3}$.

\end{itemize}

\underline{Iteration $k$, $(k\geq 2)$}

For every ${\ell}_2\in V_{{\ell}_1}^{\D_{{\ell}_1}},\dots, {\ell}_{k}\in
V_{{\ell}_{k-1}}^{\D_{{\ell}_1,\dots,{\ell}_{k-1}}}$, do the following.
\begin{itemize}
\item 
The vertex
${\ell}_k$ is the partner
of a vertex ${\ell}_{k}'$ in $M_0$ and $M$ (since
$\D_{{\ell}_1,\dots,{\ell}_{k-1}}$ is a subset of $\D$).
If ${\ell}_k<{\ell}_k'$, then by our choice of
orientation, the edge ${\ell}_k{\ell}_k'$ is oriented from ${\ell}_k$ to
${\ell}_k'$ in
$M_0$ and from ${\ell}_k'$ to ${\ell}_k$ in $M_{{\ell}_1,\dots,{\ell}_{k}}$. If
${\ell}_k>{\ell}_k'$, then we change the orientation of this edge in $M_0$ and
in
$M_{{\ell}_1,\dots,{\ell}_{k}}$. Let us also denote by
$M_{{\ell}_1,\dots,{\ell}_{k}}$
this new configuration. This change of orientation has the effect of changing
the permutation assigned to $M_0$, and we denote
by $\sigma_{M_0(M_{{\ell}_1,\dots,{\ell}_k})}$ this new permutation. It also
negates
the contribution of $M_{{\ell}_1,\dots,{\ell}_k}$ so that the global
contribution
remains unchanged. For every ${\ell}_{k+1}\in V_{{\ell}_k}$, define:
\begin{align}
M_{{\ell}_1,\dots,{\ell}_{k+1}}&=(M_{{\ell}_1,\dots,{\ell}_k}\setminus
({\ell}_{k}',{\ell}_{k}))\cup
({\ell}_k',{\ell}_{k+1})\nonumber\\
w(M_{{\ell}_1,\dots,{\ell}_{k+1}})&=\sgn(\sigma_{M_0(M_{{\ell}_1,\dots,{\ell}
_k})})(-1)^
{ |\D|-k } (-1)^ { |\C| }
\prod_{e\in M_{{\ell}_1,\dots,{\ell}_{k+1}}}a_{e} \label{equ:weight}
\end{align}

\textsc{Example (Figure \ref{fig:fig3})}. Recall that
$V_{{\ell}_1}^{\D_{{\ell}_1}}=V_1^{\{23\}}=\{2,3\}$, so that
${\ell}_2\in\{2,3\}$. If
${\ell}_2=2$, then $V_{{\ell}_2}=V_2=\{1,4,5\}$, yielding configurations
$M_{1,2,1}$, $M_{1,2,4}$,
$M_{1,2,5}$. If ${\ell}_2=3$, then $V_{{\ell}_2}=V_3=\{1,4\}$, yielding
configurations $M_{1,3,1}$,
$M_{1,3,4}$. 

\begin{figure}[ht]
\begin{center}
\includegraphics[height=5cm]{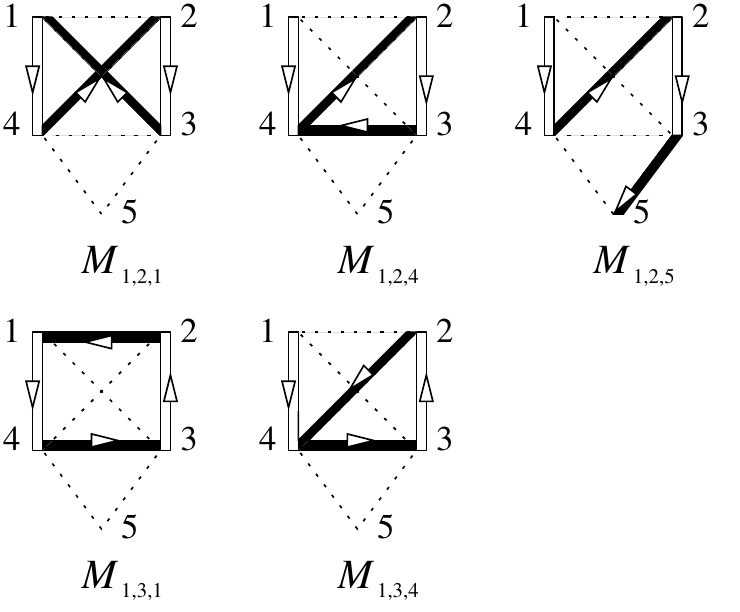} 
\caption{First line, from left
to right, black edges consists of the configurations $M_{1,2,1}$,
$M_{1,2,4}$,
$M_{1,2,5}$. Second line,
from left to right, black edges consists of the configurations
$M_{1,3,1}$,
$M_{1,3,4}$.\label{fig:fig3}}
\end{center}
\end{figure}

\item Let $\D_{{\ell}_1,\dots,{\ell}_k}$ be the set of doubled edges
$\D_{{\ell}_1,\dots,{\ell}_{k-1}}\setminus\{\eb_{{\ell}_k}\}$. Then, the set
$V_{{\ell}_{k}}$
can be partitioned as:
$
V_{{\ell}_{k}}=V_{{\ell}_{k}}^{\D_{{\ell}_1,\dots,{\ell}_k}}
\bigcup (V_{{\ell}_{k}}^{\D_{{\ell}_1,\dots,{\ell}_k}})^c,
$
and the output of Iteration $k$ is the set of configurations
$M_{{\ell}_1,\dots,{\ell}_{k+1}}$ such that ${\ell}_{k+1}$ does not belong to a
doubled edge of $\D_{{\ell}_1,\dots,{\ell}_k}$.

\begin{align*}
{\cal O}_k&=\bigcup_{{\ell}_2\in V_{{\ell}_1}^{\D_{{\ell}_1}}}\dots
\bigcup_{{\ell}_{k}\in V_{{\ell}_{k-1}}^{\D_{{\ell}_1,\dots,{\ell}_{k-1}}}}
\bigcup_{{\ell}_{k+1}\in (V_{{\ell}_{k}}^{\D_{{\ell}_1,\dots,{\ell}_k}})^c}
M_{{\ell}_1,\dots,{\ell}_{k+1}},\\
w({\cal O}_k)&=\sum_{M_{{\ell}_1,\dots,{\ell}_{k+1}}\in
{\cal O}_k}w(M_{{\ell}_1,\dots,{\ell}_{k+1}}).
\end{align*}

\item 
If $V_{{\ell}_{k}}^{\D_{{\ell}_1,\dots,{\ell}_k}}=\emptyset$, then
stop. Else, go to Step $k+1$.

\textsc{Example}: when ${\ell}_2=2$, the set $\D_{1,2}$ is empty
so that $V_{2}$ is partitioned as $\{\emptyset\}\cup\{1,4,5\}$ and the
contribution to the output ${\cal O}_2$ of Iteration 2 is $M_{1,2,1}$,
$M_{1,2,4}$, $M_{1,2,5}$. When ${\ell}_2=3$, the set $\D_{1,3}$
is also empty, implying that $V_{3}$ is partitioned as
$\{\emptyset\}\cup\{1,4\}$ and the contribution to the output ${\cal O}_2$ of
Iteration 2 is $M_{1,3,1}$, $M_{1,3,4}$. After Iteration 2, for every
${\ell}_2\in
V_{{\ell}_1}^{\D_{{\ell}_1}}$,
the set $V_{{\ell}_2}^{\D_{{\ell}_1,{\ell}_2}}$ is empty, so that
the algorithm stops.
\end{itemize}

\underline{End}

Step 1 of the algorithm stops 
at time $m$ for the first time, if it hasn't stopped at time $m-1$, and if
for
every ${\ell}_{2}\in
V_{{\ell}_{1}}^{\D_{{\ell}_1}},\dots,
{\ell}_{m}\in
V_{{\ell}_{m-1}}^{\D_{{\ell}_1,\dots,{\ell}_{m-1}}}$; 
$V_{{\ell}_{m}}^{\D_{{\ell}_1,\dots,{\ell}_m}}=\emptyset$. This
implies in particular that
$(V_{{\ell}_{m}}^{\D_{{\ell}_1,\dots,{\ell}_m}})^c=V_{{\ell}_m}$.
Since the number of doubled edges decreases by $1$ every time an iteration of
the algorithm occurs, and since the number of doubled edges in $\D$ is
finite, we are sure that Step 1 of the algorithm stops in finite time.

\subsubsection{Output of Step 1, geometric properties of
configurations}\label{sec:output}

The output of Step 1 of the algorithm is the set of configurations
$\S_1=\bigcup_{k=0}^{m} {\cal O}_k$. The weight of this set is defined to be
$w(\S_1)=\sum_{k=0}^m w({\cal O}_k)$. 

If the initial superimposition $M_0\cup M$ consists of cycles only, \emph{i.e.}
if the set $\D$ is empty, then $m=0$ and $\S_1=\{M\}$. In all other cases,
the set $\S_1$ can be rewritten in a
more compact way as: 
$$\displaystyle \S_1=
\bigcup_{\gamma_{{\ell}_1}\in\Gamma_{{\ell}_1}}M_{\gamma_{{\ell}_1}},$$ 
where: 
\begin{align*}
\Gamma_{{\ell}_1}=\Big\{&\gamma_{{\ell}_1}:\,\gamma_{{\ell}_1} \text{ is a path
of length $2k$ for some $k\in\{1,\dots,m\}$}:\,
\gamma_{{\ell}_1}={\ell}_1,{\ell}_1',\dots,{\ell}_k,{\ell}_k',{\ell}_{k+1},\\
&{\ell}_1=\min\{i\in V:\,i\text{ belongs to a doubled edge of $\D$}
\}, \\
&\forall j\in\{2,\dots,k\},\,{\ell}_j\in
V_{{\ell}_{j-1}}^{\D_{{\ell}_1,\dots,{\ell}_{j-1}}}
\text{ and }
{\ell}_j'\text{ is the partner of ${\ell}_j$ in $M_0$ and $M$},\\
&{\ell}_{k+1}\in (V_{{\ell}_{k}}^{\D_{{\ell}_1,\dots,{\ell}_k}})^c\Big\}.\\
M_{\gamma_{{\ell}_1}}=\,&M_{{\ell}_1,\dots,{\ell}_{k+1}}.
\end{align*}

Let
$\gamma_{{\ell}_1}={\ell}_1,{\ell}_1',\dots,{\ell}_k,{\ell}_k',{\ell}_{k+1}$ be
a generic path of
$\Gamma_{{\ell}_1}$ for some $k\in\{1,\dots,m\}$, and let
$F_{\gamma_{{\ell}_1}}$
denote the superimposition $M_0\cup M_{\gamma_{{\ell}_1}}$. The
configuration $F_{\gamma_{{\ell}_1}}$ and the path $\gamma_{{\ell}_1^1}$
satisfy the following properties.

$\bullet$ The oriented edge configuration $F_{\gamma_{{\ell}_1}}$:
\begin{enumerate}
\item[(I)] has one outgoing edge at every vertex of $V$, and contains the
path $\gamma_{{\ell}_1}$.
\item[(II)] has $k$ doubled edges less than $M_0\cup M$.
\end{enumerate}
$\bullet$ The oriented path $\gamma_{{\ell}_1}$:
\begin{enumerate}
\item[(III)] has even length $2k$, is alternating (meaning that edges
alternate between $M_0$ and $M_{\gamma_{{\ell}_1}}$). It starts from the
vertex ${\ell}_1$ followed by an edge of $M_0$. 
\item[(IV)] The vertex ${\ell}_1$ is the smallest vertex belonging to a doubled
edge
of $\D$. The $2k$ first vertices of $\gamma_{{\ell}_1}$ are all distinct and the
last vertex
${\ell}_{k+1}$ belongs to
$(V_{{\ell}_k}^{\D_{{\ell}_1,\dots,{\ell}_k}})^c$. Observing that:
$$
(V_{{\ell}_k}^{\D_{{\ell}_1,\dots,{\ell}_k}})^c=
V_{{\ell}_k}\cap(R\cup
V^{\C}\cup\{{\ell}_1,{\ell}_1',\dots,{\ell}_k,{\ell}_k'\}),
$$ we deduce that one of the following holds.
\item[(IV)(1)] If ${\ell}_{k+1}\in R$, then $\gamma_{{\ell}_1}$ is a loopless
oriented
path from ${\ell}_1$ to one of the root vertices of
$R$, and ${\ell}_1$ is a leaf of $F_{\gamma_{{\ell}_1}}$. Since
$R=\{n+1,\dots,n+r\}$,
${\ell}_1$ is smaller than all vertices of
$\gamma_{{\ell}_1}$.
\item[(IV)(2)] If ${\ell}_{k+1}\in V^{\C}$, then $\gamma_{{\ell}_1}$ is a
loopless
oriented path ending at a vertex of one of the cycles of~$\C$ that is, the
connected component containing ${\ell}_1$ is a unicycle with a unique branch.
The vertex ${\ell}_1$ is a leaf of $F_{\gamma_{{\ell}_1}}$ and
is smaller than the $2k$ first vertices of the path, but cannot be a priori
compared to vertices of the cycle of~$\C$. By construction of the orientation of
$M_0\cup M$, see Section \ref{sec:sec21}, the orientation of the
cycle is compatible with that of the edge $(i_1,i_2)$, where $i_1$ is the
smallest vertex of the cycle and $i_2$ is its partner in $M_0$.
\item[(IV)(3)] If ${\ell}_{k+1}\in
\{{\ell}_1,{\ell}_1',\dots,{\ell}_k,{\ell}_k'\}$, then
$\gamma_{{\ell}_1}$ contains a loop of length $\geq 3$. If
${\ell}_{k+1}={\ell}_i$ for some
$i\in\{1,\dots,k\}$, then the
loop has even length and is alternating and the part of $\gamma_{{\ell}_1}$ to
the loop also has even length, is alternating and starts with an edge of $M_0$. 
Moreover, the orientation of the loop is compatible
with the orientation of the edge $({\ell}_i,{\ell}_i')$, and the vertex
${\ell}_1$ is
smaller than all vertices of the path to the cycle and smaller than all vertices
of the cycle.

Note that if ${\ell}_{k+1}\neq {\ell}_1$, then ${\ell}_1$ is a leaf and the
connected component containing ${\ell}_1$ is a unicycle with a unique branch.
Else,
if ${\ell}_{k+1}={\ell}_1$, the connected component is a cycle.

If ${\ell}_{k+1}={\ell}_i'$ for some $i\in\{1,\dots,k\}$, then the loop has odd
length
with two edges of $M$ incident to the vertex ${\ell}_i'$. Observing that the
loop
in both directions is obtained from Step 1 of the algorithm, and using the fact
that the matrix $A$ is skew-symmetric, we
deduce that the contributions of these configurations cancel and we remove them
from the output of Step 1.
Thus we only consider configurations such that ${\ell}_{k+1}={\ell}_i$ for some
$i\in\{1,\dots,k\}$.
\end{enumerate}

\textsc{Example}. The output $\S_1$ of Step 1 is:
$
\S_1=\{M_{1,5},M_{1,2,1},
M_{1,2,4},M_{1,2,5},M_{1,3,1},M_{1,3,4}\}.
$
Configurations $M_{1,5},\,M_{1,2,5}$ are in Case (IV)(1),
configurations $M_{1,2,1},\,M_{1,3,1}$ are in Case (IV)(3) with even cycles
created, and configurations $M_{1,2,4},\,M_{1,3,4}$ are in Case (IV)(3) with odd
cycles created. Contributions of $M_{1,2,4}$ and $M_{1,3,4}$ cancel so that
they are removed from the output. As a consequence the final output of Step 1
is, see also Figure~\ref{fig:fig9}:
$$
\S_1=\{M_{1,5},M_{1,2,1},M_{1,2,5},M_{1,3,1}\},
$$

\begin{figure}[ht]
\begin{center}
\includegraphics[width=8cm]{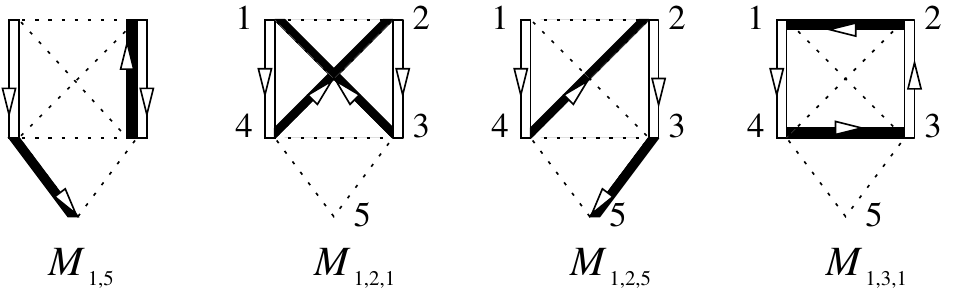} 
\caption{Output of Step $1$ of the algorithm.}\label{fig:fig9}
\end{center}
\end{figure}

\subsubsection{Weight of configurations}\label{sec:weight}

As a consequence of the next two lemmas, we obtain that Step 1 of the algorithm
is weight
preserving \emph{i.e.} $w(\S_1)=w(M)$, see Corollary \ref{cor:1}.

\begin{lem}\label{lem:1} We have:
\begin{itemize}
\item $\displaystyle w(M)=\sum_{{\ell}_2\in
V_{{\ell}_1}}w(M_{{\ell}_1,{\ell}_2})$.\\
\item If $m\geq 2$, then
for every $k\in\{2,\dots,m\}$ and every ${\ell}_2\in
V_{{\ell}_1}^{\D_{{\ell}_1}},\dots,
{\ell}_{k}\in V_{{\ell}_{k-1}}^{\D_{{\ell}_1,\dots,{\ell}_{k-1}}}$:
\begin{equation*}
w(M_{{\ell}_1,\dots,{\ell}_{k}})=\sum_{{\ell}_{k+1}\in
V_{{\ell}_{k}}}w(M_{{\ell}_1,\dots,{\ell}_{k+1}}). 
\end{equation*}
\end{itemize}
\end{lem}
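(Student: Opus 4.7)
My plan is to reduce both assertions to a single application of the column-sum identity. Since $\sum_{j \in V^R} a_{\ell' j} = 0$ and $a_{\ell' \ell'} = 0$ by skew-symmetry, the set $V_\ell$ of vertices of $V^R$ adjacent to $\ell'$ other than $\ell$ carries exactly the remaining non-zero entries of row $\ell'$, so
$$a_{\ell' \ell} = -\sum_{j \in V_\ell} a_{\ell' j}.$$
This is precisely the ``opening'' rule depicted in Figure~\ref{fig:fig7}. Both equalities will follow by substituting this expansion into the single matrix entry involving $\ell_k'$ (respectively $\ell_1'$) present in the weight of the ``incoming'' configuration.

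For the first assertion I would note that $\ell_1 < \ell_1'$ (both vertices lie in $V$ and $\ell_1$ is defined as the smallest such), so by the orientation convention of Section~\ref{sec:sec21} the $M$-edge incident to $\ell_1$ is oriented $(\ell_1',\ell_1)$ and contributes the factor $a_{\ell_1' \ell_1}$ to $\prod_{e \in M} a_e$. All remaining $M$-edges agree between $M$ and $M_{\ell_1,\ell_2}$, and no $M_0$-edge is reoriented during Iteration~1, so $\sgn(\sigma_{M_0(M)})$ and $|\C|$ are untouched while $|\D|$ drops by one. Substituting the identity into the product and absorbing the resulting minus sign into $(-1)^{|\D|-1}$ to restore $(-1)^{|\D|}$ yields $w(M) = \sum_{\ell_2 \in V_{\ell_1}} w(M_{\ell_1,\ell_2})$.

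For the second assertion the only additional subtlety is the conditional reorientation performed when $\ell_k > \ell_k'$. I would first record that this surgery is weight-neutral: reversing the $M_0$-edge flips $\sgn(\sigma_{M_0})$, and the simultaneous reversal of the corresponding $M$-edge replaces $a_{\ell_k \ell_k'}$ by $a_{\ell_k' \ell_k} = -a_{\ell_k \ell_k'}$, the two sign changes cancelling. Hence one may work in the normalization in which the $M$-edge is oriented $(\ell_k',\ell_k)$ with matrix coefficient $a_{\ell_k' \ell_k}$ in $w(M_{\ell_1,\dots,\ell_k})$. Applying the column-sum identity to row $\ell_k'$, and noting that passing from $M_{\ell_1,\dots,\ell_k}$ to $M_{\ell_1,\dots,\ell_{k+1}}$ only reattaches the unique $M$-edge at $\ell_k'$---so that $\sgn(\sigma_{M_0(M_{\ell_1,\dots,\ell_k})})$, $|\C|$, and every other edge weight are unchanged, while $|\D|-k+1$ decreases by one---the same bookkeeping closes the identity.

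The hard part is not the algebra, which amounts to a one-line column-sum expansion, but the careful verification that $V_{\ell_k}$ is exactly the correct index set for that expansion, and that none of the auxiliary data---the $M_0$-permutation sign, the cycle count $|\C|$, and the product of the other edge weights---shifts under the local, vertex-$\ell_k'$-centered surgery that replaces one $M$-edge at $\ell_k'$ by another. Both points are immediate from the definitions, and once recorded the lemma follows.
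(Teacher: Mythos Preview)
Your proof is correct and follows essentially the same route as the paper: both arguments isolate the single edge $(\ell_k',\ell_k)$ in the weight, apply the row-sum identity $a_{\ell_k'\ell_k}=-\sum_{j\in V_{\ell_k}}a_{\ell_k' j}$, and absorb the resulting minus sign into the doubled-edge count $(-1)^{|\D|-k}$. Your discussion of the weight-neutrality of the reorientation at Iteration~$k$ is slightly more explicit than the paper's, which simply works in the already-normalized orientation, but the substance is identical.
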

\begin{proof}
Suppose that $m\geq 2$, the proof in the other case being similar. 
For every ${\ell}_{k+1}\in V_{{\ell}_k}$, 
$M_{{\ell}_1,\dots,{\ell}_{k+1}}=\{M_{{\ell}_1,\dots,{\ell}_k}\setminus({\ell}
_k',
{\ell}_k)\}\cup\{({\ell}_k',{\ell}_{k+1})\}$, thus:
\begin{equation*}
\prod_{e\in M_{{\ell}_1,\dots,{\ell}_{k+1}}}a_{e}=
\frac{a_{{\ell}_k',{\ell}_{k+1}}}{a_{{\ell}_k',{\ell}_k}}\prod_{e\in
M_{{\ell}_1,\dots,
{\ell}_k}}a_{e}.
\end{equation*}
By assumption, coefficients of each line of the matrix $A^R$ sum to $0$.
Returning
to the definition of
$V_{{\ell}_k}$, this implies
that $\displaystyle\sum_{{\ell}_{k+1}\in
V_{{\ell}_k}}a_{{\ell}_k',{\ell}_{k+1}}=-a_{{\ell}_k',{\ell}_k}$. Thus,
\begin{equation}\label{equation:1}
\sum_{{\ell}_{k+1}\in
V_{{\ell}_k}}\prod_{e\in M_{{\ell}_1,\dots,{\ell}_{k+1}}}a_{e}
=-\prod_{e\in M_{{\ell}_1,\dots,
{\ell}_k}}a_{e}.
\end{equation}

Combining Equation \eqref{equation:1} with the definition of the weight of
configurations given in Equation \eqref{equ:weight} yields:
\begin{align*}
\sum_{{\ell}_{k+1}\in V_{{\ell}_k}}w(M_{{\ell}_1,\dots,{\ell}_{k+1}}) &=
\sgn(\sigma_{M_0(M_{{\ell}_1,\dots,{\ell}_k})})(-1)^{|\C|}(-1)^{|\D|-k}\sum_{{
\ell}_{k+1}
\in
V_{{\ell}_k}}\prod_{e \in M_ { {\ell}_1 , \dots,{\ell}_{k+1}}}a_{e}\\
&=\sgn(\sigma_{M_0(M_{{\ell}_1,\dots,{\ell}_k})})(-1)^{|\C|}(-1)^{|\D|-(k-1)}
\prod_{e\in M_{{\ell}_1,\dots,{\ell}_k}}a_{e}\\
&=w(M_{{\ell}_1,\dots,{\ell}_k}).
\qedhere
\end{align*}
\end{proof}

\begin{lem}\label{lem:2}
Suppose $m\geq 2$. Then for every $k\in\{2,\dots,m\}$,
$$
\sum_{i=k}^{m}w({\cal O}_i)=\sum_{{\ell}_2\in
V_{{\ell}_1}^{\D_{{\ell}_1}}}\dots 
\sum_{{\ell}_{k}\in
V_{{\ell}_{k-1}}^{\D_{{\ell}_1,\dots,{\ell}_{k-1}}}} w(M_{{\ell}_1,
\dots , {\ell}_ { k} }) .
$$
\end{lem}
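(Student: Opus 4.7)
The plan is to prove Lemma \ref{lem:2} by downward induction on $k$, starting from $k=m$ and working down to $k=2$, at each step using Lemma \ref{lem:1} to contract an innermost sum.

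For the base case $k=m$, I would unfold the definition of ${\cal O}_m$. Since Step 1 terminates at iteration $m$, we have $V_{{\ell}_m}^{\D_{{\ell}_1,\dots,{\ell}_m}}=\emptyset$ for every admissible sequence ${\ell}_2,\dots,{\ell}_m$, so $(V_{{\ell}_m}^{\D_{{\ell}_1,\dots,{\ell}_m}})^c = V_{{\ell}_m}$. The definition of $w({\cal O}_m)$ then reads
\[
w({\cal O}_m)=\sum_{{\ell}_2\in V_{{\ell}_1}^{\D_{{\ell}_1}}}\dots\sum_{{\ell}_m\in V_{{\ell}_{m-1}}^{\D_{{\ell}_1,\dots,{\ell}_{m-1}}}}\sum_{{\ell}_{m+1}\in V_{{\ell}_m}}w(M_{{\ell}_1,\dots,{\ell}_{m+1}}),
\]
and applying the second bullet of Lemma \ref{lem:1} with index $m$ collapses the innermost sum into $w(M_{{\ell}_1,\dots,{\ell}_m})$, yielding the statement for $k=m$.

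For the inductive step, assuming the identity for $k+1$, I would split $\sum_{i=k}^m w({\cal O}_i) = w({\cal O}_k) + \sum_{i=k+1}^m w({\cal O}_i)$. Writing out $w({\cal O}_k)$ from its definition gives an innermost sum over ${\ell}_{k+1}\in (V_{{\ell}_k}^{\D_{{\ell}_1,\dots,{\ell}_k}})^c$, while the inductive hypothesis applied to $\sum_{i=k+1}^m w({\cal O}_i)$ produces an innermost sum over ${\ell}_{k+1}\in V_{{\ell}_k}^{\D_{{\ell}_1,\dots,{\ell}_k}}$, with the outer prefixes of sums matching. The partition $V_{{\ell}_k}=V_{{\ell}_k}^{\D_{{\ell}_1,\dots,{\ell}_k}}\cup (V_{{\ell}_k}^{\D_{{\ell}_1,\dots,{\ell}_k}})^c$ from Section \ref{sec:sec22} lets me merge these into a single sum over ${\ell}_{k+1}\in V_{{\ell}_k}$. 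A final application of Lemma \ref{lem:1} then contracts that innermost sum to $w(M_{{\ell}_1,\dots,{\ell}_k})$, giving the desired identity at level $k$.

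The only real obstacle is keeping track of the nested index sets so that the outer prefixes of sums in $w({\cal O}_k)$ and in the inductive hypothesis genuinely coincide; once that is verified, everything reduces to the set-partition $V_{{\ell}_k}=V_{{\ell}_k}^{\D_{{\ell}_1,\dots,{\ell}_k}}\cup (V_{{\ell}_k}^{\D_{{\ell}_1,\dots,{\ell}_k}})^c$ and to Lemma \ref{lem:1}. No further algebraic computation with signs or matrix entries is needed, since the sign and combinatorial weight bookkeeping is already packaged inside Lemma \ref{lem:1}.
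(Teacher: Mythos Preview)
Your proposal is correct and follows essentially the same approach as the paper: backward induction on $k$, with the base case $k=m$ using that $(V_{{\ell}_m}^{\D_{{\ell}_1,\dots,{\ell}_m}})^c=V_{{\ell}_m}$ together with Lemma~\ref{lem:1}, and the inductive step combining the complementary innermost sums via the partition $V_{{\ell}_k}=V_{{\ell}_k}^{\D_{{\ell}_1,\dots,{\ell}_k}}\cup (V_{{\ell}_k}^{\D_{{\ell}_1,\dots,{\ell}_k}})^c$ before applying Lemma~\ref{lem:1} again. The only difference is cosmetic: the paper indexes the induction step as passing from $k$ to $k-1$, while you pass from $k+1$ to $k$.
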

\begin{proof}
In order to simplify notations let us write, only in this proof,
$V_{{\ell}_k}^{\D}$ instead of
$V_{{\ell}_k}^{\D_{{\ell}_1,\dots,{\ell}_{k}}}$. Lemma \ref{lem:2} is
proved by backward induction on $k$.

Suppose $k=m$. By definition of the last step of the algorithm,
$V_{{\ell}_{m}}^{\D}=\emptyset$, so that $(V_{{\ell}_m}^{\D})^c=V_{{\ell}_m}$
and:
\begin{align*}
w({\cal O}_m)&=\sum_{{\ell}_2\in V_{{\ell}_1}^\D}\dots 
\sum_{{\ell}_{m}\in V_{{\ell}_{m-1}}^\D}
\sum_{{\ell}_{m+1}\in V_{{\ell}_{m}}}w(M_{{\ell}_1,\dots,{\ell}_{m+1}})\\
&=\sum_{{\ell}_2\in V_{{\ell}_1}^\D}\dots 
\sum_{{\ell}_{m}\in V_{{\ell}_{m-1}}^\D}w(M_{{\ell}_1,\dots,{\ell}_{m}}),\text{
(by
Lemma \ref{lem:1})},
\end{align*}
thus proving the case $k=m$. Suppose that the statement is true for some
$k\in\{3,\dots,m\}$. By Iteration $k-1$ of Step 1 of the algorithm, we know
that:
$$
w({\cal O}_{k-1})=\sum_{{\ell}_2\in V_{{\ell}_1}^\D}\dots 
\sum_{{\ell}_{k-1}\in V_{{\ell}_{k-2}}^\D}
\sum_{{\ell}_{k}\in (V_{{\ell}_{k-1}}^{\D})^c}w(M_{{\ell}_1,\dots,{\ell}_{k}})
$$
Combining this with the induction hypothesis yields:
\begin{align*}
\sum_{i=k-1}^{m}w({\cal O}_i)&=w({\cal O}_{k-1})+\sum_{i=k}^{m}w({\cal O}_i)\\
&=\sum_{{\ell}_2\in V_{{\ell}_1}^\D}\dots 
\sum_{{\ell}_{k-1}\in V_{{\ell}_{k-2}}^\D}
\Bigl(
\sum_{{\ell}_{k}\in(V_{{\ell}_{k-1}}^{\D})^c}+
\sum_{{\ell}_{k}\in V_{{\ell}_{k-1}}^{\D}}\Bigr)
w(M_{{\ell}_1,\dots,{\ell}_{k}})\\
&= \sum_{{\ell}_2\in V_{{\ell}_1}^\D}\dots 
\sum_{{\ell}_{k-1}\in V_{{\ell}_{k-2}}^\D}
\sum_{{\ell}_{k}\in V_{{\ell}_{k-1}}}w(M_{{\ell}_1,\dots,{\ell}_{k}})\\
&=\sum_{{\ell}_2\in V_{{\ell}_1}^\D}\dots 
\sum_{{\ell}_{k-1}\in V_{{\ell}_{k-2}}^\D}w(M_{{\ell}_1,\dots,{\ell}_{k-1}})
\text{ (by
Lemma \ref{lem:1})},
\end{align*}
proving the statement for $k-1$ and ending the proof of Lemma \ref{lem:2}.
\end{proof}

\begin{cor}\label{cor:1}
$$
w(\S_1)=w(M).
$$ 
\end{cor}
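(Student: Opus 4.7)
The plan is to combine Lemma \ref{lem:1} and Lemma \ref{lem:2} so that the sum defining $w(\S_1)=\sum_{k=0}^m w(\mathcal{O}_k)$ telescopes into $w(M)$. The only nontrivial content to be packaged is the interaction between the ``output'' terms of each iteration (indexed by $(V_{\ell_k}^{\D_{\ell_1,\dots,\ell_k}})^c$) and the ``continuing'' terms (indexed by $V_{\ell_k}^{\D_{\ell_1,\dots,\ell_k}}$), whose union is precisely $V_{\ell_k}$.

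First I would dispose of the degenerate case where $\D=\emptyset$: here $m=0$, $\mathcal{O}_0=\{M\}$, so $w(\S_1)=w(M)$ directly from the definitions. I would then assume $\D\neq\emptyset$, so that $\mathcal{O}_0=\{\emptyset\}$ contributes $0$ and $m\geq 1$. If $m=1$, the stopping criterion gives $V_{\ell_1}^{\D_{\ell_1}}=\emptyset$, hence $(V_{\ell_1}^{\D_{\ell_1}})^c=V_{\ell_1}$, and the first part of Lemma \ref{lem:1} yields
\begin{equation*}
w(\S_1)=w(\mathcal{O}_1)=\sum_{\ell_2\in V_{\ell_1}}w(M_{\ell_1,\ell_2})=w(M).
\end{equation*}

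For the main case $m\geq 2$, I would apply Lemma \ref{lem:2} with $k=2$ to rewrite the tail of the sum:
\begin{equation*}
\sum_{i=2}^{m}w(\mathcal{O}_i)=\sum_{\ell_2\in V_{\ell_1}^{\D_{\ell_1}}}w(M_{\ell_1,\ell_2}).
\end{equation*}
On the other hand, the definition of $\mathcal{O}_1$ in Iteration 1 of Step 1 gives
\begin{equation*}
w(\mathcal{O}_1)=\sum_{\ell_2\in (V_{\ell_1}^{\D_{\ell_1}})^c}w(M_{\ell_1,\ell_2}).
\end{equation*}
Adding these two identities and using the partition $V_{\ell_1}=V_{\ell_1}^{\D_{\ell_1}}\cup (V_{\ell_1}^{\D_{\ell_1}})^c$ produces
\begin{equation*}
w(\S_1)=\sum_{i=1}^{m}w(\mathcal{O}_i)=\sum_{\ell_2\in V_{\ell_1}}w(M_{\ell_1,\ell_2}).
\end{equation*}
A final application of the first identity of Lemma \ref{lem:1} then collapses this to $w(M)$, finishing the proof.

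I expect no real obstacle here: the hard analytic content (the column–sum zero condition, the correct placement of the signs $(-1)^{|\D|-k}$, and the interaction with $\mathrm{sgn}(\sigma_{M_0(M_{\ell_1,\dots,\ell_k})})$) has already been absorbed into Lemmas \ref{lem:1} and \ref{lem:2}. The only care required is the bookkeeping of the two cases $m=1$ and $m\geq 2$, and confirming that the set $(V_{\ell_1}^{\D_{\ell_1}})^c$ used in Iteration 1 matches the complement appearing when one combines the two lemmas. Since Lemma \ref{lem:2} is stated only for $k\geq 2$, the case $m=1$ must be handled directly from Lemma \ref{lem:1}, but this is immediate.
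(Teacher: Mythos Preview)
Your proof is correct and follows essentially the same route as the paper: apply Lemma~\ref{lem:2} with $k=2$ to collapse $\sum_{i=2}^m w(\mathcal{O}_i)$ into $\sum_{\ell_2\in V_{\ell_1}^{\D_{\ell_1}}}w(M_{\ell_1,\ell_2})$, add the complementary sum $w(\mathcal{O}_1)$, and finish with Lemma~\ref{lem:1}. Your explicit separation of the case $m=1$ (where Lemma~\ref{lem:2} does not formally apply) is a mild refinement over the paper, which tacitly treats the resulting sums as empty.
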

\begin{proof}
Suppose $m\geq 1$. Then, 
\begin{align*}
w(\S_1)&=w({\cal O}_1)+\sum_{k=2}^m w({\cal O}_k)\\
 &=w({\cal O}_1)+\sum_{{\ell}_2\in
V_{{\ell}_1}^{\D_{{\ell}_1}}}w(M_{{\ell}_1,{\ell}_2}),\text{ (by Lemma
\ref{lem:2})}\\
&=\sum_{{\ell}_2\in
(V_{{\ell}_1}^{\D_{{\ell}_1}})^c}w(M_{{\ell}_1,{\ell}_2})+\sum_{{\ell}_2\in
V_{{\ell}_1}^{\D_{{\ell}_1}}}
w(M_{{\ell}_1,{\ell}_2}), \text{ (by definition of ${\cal O}_1$)}\\
&=\sum_{{\ell}_2\in V_{{\ell}_1}}w(M_{{\ell}_1,{\ell}_2})\\
&=w(M), \text{ (by Lemma \ref{lem:1})}.
\end{align*}
When $m=0$, $\S_1=\{M\}$, and the conclusion is immediate.
\end{proof}

\subsection{Complete algorithm}\label{sec:sec25}

Let $M_0$ be a reference perfect matching of the graph $G$ and let $M$ be a
generic one. Recall that $\C$ denotes the set of cycles of length $\geq 4$ of
the superimposition $M_0\cup M$, and $\D$ denotes its set of doubled edges.

In Section \ref{sec:sec24}, we established Step 1 of the
algorithm, starting from the superimposition $M_0\cup M$, yielding a set of
oriented edge configurations $\S_1$ through the opening of doubled edges
procedure, whose total weight is equal to the contribution of $M$ to the
Pfaffian. In this section, we introduce the complete algorithm, which in essence
consists of iterations of Step~1 performed until no doubled edges of $\D$
remain.

Let us directly handle the following trivial case. If $M_0\cup M$ consists of
cycles only, that is, if the set $\D$ is the empty, then the opening
of edges procedure does not start, and recall that the output of Step 1 is
$\S_1=\{M\}$. The same holds for the complete algorithm and its
output is $\T=\{M\}$. 

\subsubsection{Step 1 of the complete algorithm}

Assume that the initial superimposition contains at least one doubled edge,
\emph{i.e.} $\D\neq\emptyset$.

Notations are complicated by the fact that the
algorithm depends on the
labeling of the vertices and that iterations of Step 1 depend on previous
steps. We thus need many indices to keep track of everything rigorously, but
one should keep in mind that, in essence, we are iterating Step
1. Let us add sub/superscripts
to Step 1 of Section~\ref{sec:sec24}. That is, ${\ell}_1$ becomes
${\ell}_1^1$, Iteration $k$ becomes
$k_1$ and Step 1 ends at time $m_1$. The set of configurations obtained
from Step 1 is
$
\S_{1}=\bigcup_{\gamma_{{\ell}_1^1}\in
\Gamma_{{\ell}_1^1}}M_{\gamma_{{\ell}_1^1}}$,
and its weight is
$w(\S_1)=\sum_{\gamma_{{\ell}_1^1}\in\Gamma_{{\ell}_1^1}}w(M_{\gamma_{{\ell}_1^1
}})$.

For every $\gamma_{{\ell}_1^1}\in\Gamma_{{\ell}_1^1}$, let
$\D_{\gamma_{{\ell}_1^1}}$ be
the set of doubled edges of the superimposition $M_0~\cup~
M_{\gamma_{{\ell}_1^1}}$. If $\D_{\gamma_{{\ell}_1^1}}=\emptyset$, then stop;
else
go to Step $2$. 

\underline{Output of Step 1 of the complete algorithm}. It is the subset
$\T_1$ of $\S_1$, consisting of configurations $M_{\gamma_{{\ell}_1^1}}$
where $\gamma_{{\ell}_1^1}\in\Gamma_{{\ell}_1^1}$, and
$\D_{\gamma_{{\ell}_1^1}}$ is empty. Formally,
$$
\T_1=\bigcup_{\gamma_{{\ell}_1^1}\in(\widetilde{\Gamma}_{{\ell}_1^1})^c}
M_{\gamma_{{\ell}_1^1}},
$$
where $\widetilde{\Gamma}_{{\ell}_1^1}$ is the set of paths
$\gamma_{{\ell}_1^1}$ of
$\Gamma_{{\ell}_1^1}$ such that $\D_{\gamma_{{\ell}_1^1}}$ is non-empty. If for
all
$\gamma_{{\ell}_1^1}\in \Gamma_{{\ell}_1^1}$, the set $\D_{\gamma_{{\ell}_1^1}}$
is non-empty, then $\T_1=\emptyset$.

\textsc{Example}. Recall that the output of Step 1 is 
$\S_1=\{M_{1,5},M_{1,2,1},M_{1,2,5},M_{1,3,1}\}$. The
set of doubled edges of the superimposition of $M_0$ and
$M_{1,2,1},M_{1,2,5},M_{1,3,1}$ is empty, so that the output of
Step 1 of the complete algorithm is
$\T_1=\{M_{1,2,1},M_{1,2,5},M_{1,3,1}\}$, and the algorithm
continues with the configuration $M_{1,5}$.

\subsubsection{Step $j$ of the complete algorithm, $j\geq 2$}

For every $\gamma_{{\ell}_1^1}\in\widetilde{\Gamma}_{{\ell}_1^1},\dots,
\gamma_{{\ell}_1^{j-1}}\in\widetilde{\Gamma}_{{\ell}_1^{j-1}}(\gamma_{{\ell}_1^1
},\dots,
\gamma_{{\ell}_1^{j-2}})$, perform Step~1
of the algorithm with the initial
superimposition
$M_0\cup M_{\gamma_{{\ell}_1^1},\dots,\gamma_{{\ell}_1^{j-1}}}$. That is,
define ${\ell}_1^j=\min\{i\in V:\,i \text{ belongs to a doubled edge of }
\D_{\gamma_{{\ell}_1^1},\dots,\gamma_{{\ell}_1^{j-1}}}\}$, and iterate until
the
algorithm ends at some time $m_j$. Everything works out in the same way because
$\D_{\gamma_{{\ell}_1^1},\dots,\gamma_{{\ell}_1^{j-1}}}$ is a subset of $\D$.
The
output is the set of oriented edge configurations: 
\begin{equation*}
\S_j(\gamma_{{\ell}_1^1},\dots,\gamma_{{\ell}_1^{j-1}})
=
\bigcup_{\gamma_{{\ell}_1^j}\in
\Gamma_{{\ell}_1^{j}}(\gamma_{{\ell}_1^1},
\dots,\gamma_{{\ell}_1^{j-1}})}M_{\gamma_{{\ell}_1^1},\dots,\gamma_{{\ell}_1^{
j}}},
\end{equation*}
where 
\begin{align}\label{equ:gamma1}
\Gamma_{{\ell}_1^j}(\gamma_{{\ell}_1^1},\dots,\gamma_{{\ell}_1^{j-1}}) 
&=\Bigg\{\gamma_{{\ell}_1^j}:\,\gamma_{{\ell}_1^j}\text{ is a path
}{\ell}_1^j,{\ell}_1^{j'},\dots,{\ell}_{k_j}^j,{\ell}_{k_j}^{j'},{\ell}_{k_j+1}
^j,
\text{ for some $k_j\in\{1,\dots,m_j\}$},\nonumber\\
&\text{such that }
{\ell}_1^j=\min\{i\in V:\,i \text{ belongs to a doubled edge of }
\D_{\gamma_{{\ell}_1^1},\dots,\gamma_{{\ell}_1^{j-1}}}\}\nonumber\\
&\forall i\in\{2,\dots,k_j\},\,{\ell}_i^j\in
V_{{\ell}_{i-1}^j}^{
\D_{\gamma_{{\ell}_1^1},\dots,\gamma_{{\ell}_1^{j-1}},{\ell}_1^j,\dots,{\ell}_
{i-1}^j}
},\nonumber\\
&\text{ and }{\ell}_i^{j'}\text{ is the
partner of ${\ell}_i^j$ in $M_0$ and $M$},\nonumber\\
&
{\ell}_{k_j+1}^j\in 
\Bigl(V_{{\ell}_{k_j}^j}^{
\D_{\gamma_{{\ell}_1^1},\dots,\gamma_{{\ell}_1^{j-1}},{\ell}_1^j,\dots,{\ell}_
{k_j}^j}}
\Bigr)^c
\Bigg\}.\\
M_{\gamma_{{\ell}_1^1},\dots,\gamma_{{\ell}_1^{j}}}
=\,&M_{\gamma_{{\ell}_1^1},\dots,\gamma_{{\ell}_1^{j-1}},{\ell}_1^j,
\dots,{\ell}_{k_j+1}^j},\nonumber\\
w(M_{\gamma_{{\ell}_1^1},\dots,\gamma_{{\ell}_1^{j}}})&=
\sgn(\sigma_{M_0(M_{\gamma_{{\ell}_1^1},\dots,\gamma_{{\ell}_1^{j-1}},{\ell}
_1^j,\dots,{\ell}_
{ k_j}^j})
})(-1)^{|\D|-(k_1+\dots+k_j)}(-1)^{|\C|}
\prod_{e\in
M_{\gamma_{{\ell}_1^1},\dots,\gamma_{{\ell}_1^{j}}}}a_{e}.\nonumber
\end{align}

For every $\gamma_{{\ell}_1^j}\in
\Gamma_{{\ell}_1^j}(\gamma_{{\ell}_1^1},\dots,\gamma_{{\ell}_1^{j-1}})$, do the
following:
if $\D_{\gamma_{{\ell}_1^1},\dots,\gamma_{{\ell}_1^j}}$ is empty, stop; else go
to Step $j+1$.

\underline{Output of Step $j$ of the complete algorithm}.

Let $\T_j(\gamma_{{\ell}_1^1},\dots,\gamma_{{\ell}_1^{j-1}})$ be the subset of
$\S_j(\gamma_{{\ell}_1^1},\dots,\gamma_{{\ell}_1^{j-1}})$, consisting of
configurations $M_{\gamma_{{\ell}_1^1},\dots,\gamma_{{\ell}_1^j}}$ such that
$\D_{\gamma_{{\ell}_1^1},\dots,\gamma_{{\ell}_1^j}}$ is empty. Formally,
$$
\T_j(\gamma_{{\ell}_1^1},\dots,\gamma_{{\ell}_1^{j-1}})=
\bigcup_{
\gamma_{{\ell}_1^j}\in(\widetilde{\Gamma}_{{\ell}_1^j}(\gamma_{{\ell}_1^1},
\dots,
\gamma_{{\ell}_1^{j-1}}))^c
}M_{\gamma_{{\ell}_1^1},\dots,\gamma_{{\ell}_1^j}},$$
where
$\widetilde{\Gamma}_{{\ell}_1^j}(\gamma_{{\ell}_1^1},\dots,
\gamma_{{\ell}_1^{j-1}})$ is the subset of paths $\gamma_{{\ell}_1^j}$ of
$\Gamma_{{\ell}_1^j}(\gamma_{{\ell}_1^1},\dots,
\gamma_{{\ell}_1^{j-1}})$ such that
$\D_{\gamma_{{\ell}_1^1},\dots,\gamma_{{\ell}_1^j}}$
is non-empty. If $\D_{\gamma_{{\ell}_1^1},\dots,\gamma_{{\ell}_1^j}}$ is non-
empty,
then $\T_j(\gamma_{{\ell}_1^1},\dots,\gamma_{{\ell}_1^{j-1}})=\emptyset$.

Then, the output $\T_j$ of Step $j$ of the complete algorithm is:
\begin{equation*}
\T_j=\bigcup_{\gamma_{{\ell}_1^1}\in \widetilde{\Gamma}_{{\ell}_1^1}}
\dots
\bigcup_{\gamma_{{\ell}_1^{j-1}}\in\widetilde{\Gamma}_{{\ell}_1^{j-1}}(\gamma_{{
\ell}_1^1},
\dots,
\gamma_{{\ell}_1^{j-2}} )}
\bigcup_{\gamma_{{\ell}_1^j}\in
(\widetilde{\Gamma}_{{\ell}_1^j}(\gamma_{{\ell}_1^1},\dots,\gamma_{{\ell}_1^{
j-1}}))^c}
M_{\gamma_{{\ell}_1^1},\dots,\gamma_{{\ell}_1^{j}}}.
\end{equation*}
For convenience, we shall
also use the notation $\Gamma_j$ for the
paths $(\gamma_{{\ell}_1^1},\dots,\gamma_{{\ell}_1^j})$ involved in $\T_j$,
\emph{i.e}:
$$
\T_j=
\bigcup_{(\gamma_{{\ell}_1^1},\dots,\gamma_{{\ell}_1^j})\in\Gamma_j}M_{\gamma_{
{\ell}_1^1 } ,\dots,\gamma_{{\ell}_1^j}}.
$$
The weight of $\T_j$ is the sum of the weights of the
configurations it contains.\\

\textsc{Example}. In Step 2, we perform Step $1$ of the algorithm starting from
the initial superimposition $M_0\cup M_{1,5}$. The
latter contains one doubled edge $23$, thus the vertex ${\ell}_1^2$ is the
smallest
of 2 and 3, that is 2. The output $\S_2$ consists of the configurations
$M_{1,5;2,1}, M_{1,5;2,4},M_{1,5;2,5}$, 
depicted in Figure \ref{fig:fig4} below. 

\begin{figure}[ht]
\begin{center}
\includegraphics[height=2.5cm]{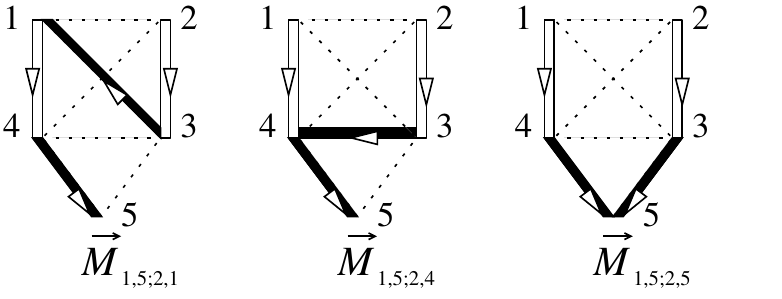} 
\caption{Output of Step $2$ of the algorithm.}\label{fig:fig4}
\end{center}
\end{figure}

The superimposition of $M_0$ and the above three configurations contains
no doubled edges. As a consequence, the complete algorithm stops and the output
$\T_2$ of Step $2$ is:
$$\T_2=\{M_{1,5;2,1}, M_{1,5;2,4},M_{1,5;2,5}\}.$$

\subsubsection{End and output of the complete algorithm}

The algorithm stops at Step $T$ for the first time, if it hasn't stopped at
time $T-1$, and if for every
$\gamma_{{\ell}_1^1}\in\widetilde{\Gamma}_{{\ell}_1^1},\dots,
\gamma_{{\ell}_1^{T-1}}\in\widetilde{\Gamma}_{{\ell}_1^{T-1}}(\gamma_{{\ell}_1^1
},\dots,
\gamma_ { {\ell}_1^ { T-2 } } )$ ,
$\gamma_{{\ell}_1^T}\in\Gamma_{{\ell}_1^{T}}(\gamma_{{\ell}_1^1},\dots,\gamma_
{{\ell}_1^{T-1}})$, the superimposition
$M_0\cup M_{\gamma_{{\ell}_1^1},\dots,\gamma_{{\ell}_1^T}}$ contains no
doubled edge. This implies in particular that 
$(\widetilde{\Gamma}_{{\ell}_1^{T}}(\gamma_{{\ell}_1^1},\dots,\gamma_
{{\ell}_1^{T-1}}))^c=\Gamma_{{\ell}_1^{T}}(\gamma_{{\ell}_1^1},\dots,\gamma_
{{\ell}_1^{T-1}})$. Since the number of doubled edges decreases at every step
and
since $\D$ is finite, we are sure that this happens in finite time.

The output $\T$ of the complete algorithm is :
$$
\T=\bigcup\limits_{j=1}^T \T_j.
$$


\textsc{Example}. The output of the complete algorithm is:
$$
\T=\T_1\cup\T_2=\{M_{1,2,1},M_{1,2,5},M_{1,3,1},
M_{1,5;2,1}, M_{1,5;2,4},M_{1,5;2,5}\},$$ summarized in Figure
\ref{fig:fig5} below.

\begin{figure}[ht]
\begin{center}
\includegraphics[width=11cm]{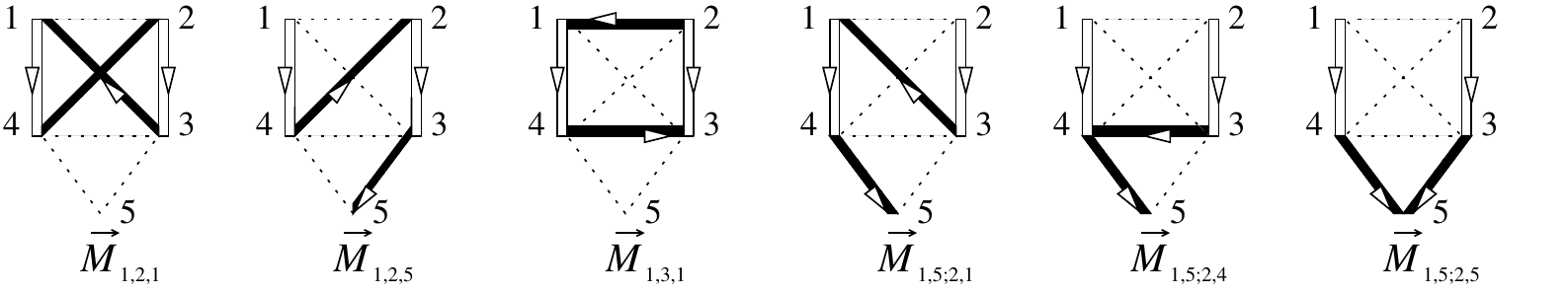} 
\caption{Output of the complete algorithm.}\label{fig:fig5}
\end{center}
\end{figure}

The \emph{weight} of $\T$ is the sum of the weights of the configurations it
contains. If the initial superimposition $M_0\cup M$ consists of cycles
only, \emph{i.e.} the set $\D$ is empty, then $\T=\{M\}$, and 
\begin{equation*}
w(\T)=w(M)=\sgn(\sigma_{M_0(M)})(-1)^{|\C|}\prod_{e\in M}a_e.
\end{equation*}
In all other cases:
\begin{equation*}
w(\T)=\sum_{j=1}^T
\sum_{
(\gamma_{{\ell}_1^1},\dots,\gamma_{{\ell}_1^j})
\in \Gamma_j} 
w(M_{\gamma_{{\ell}_1^1},\dots,\gamma_{{\ell}_1^j}}).
\end{equation*}
Since for every $j\in\{1,\dots,T\}$, and for every
$(\gamma_{{\ell}_1^1},\dots,\gamma_{{\ell}_1^j})\in\Gamma_j$, the
superimposition
$M_0\cup M_{\gamma_{{\ell}_1^1},\dots,\gamma_{{\ell}_1^j}}$ contains no
doubled edge of $\D$, we have:
\begin{equation}\label{equ:weight1}
w(M_{\gamma_{{\ell}_1^1},\dots,\gamma_{{\ell}_1^j}})=\sgn(\sigma_{M_0(M_{
\gamma_{{\ell}_1^1
},\dots,\gamma_{{\ell}_1^j}})})(-1)^{|\C|
} \prod_{e\in
M_{\gamma_{{\ell}_1^1},\dots,\gamma_{{\ell}_1^j}}}a_{e}.
\end{equation}

By iterating the argument of Section \ref{sec:weight}, we obtain the
following:
\begin{cor}\label{cor:2}
The complete algorithm is weight preserving, that is:
$$
w(\T)=w(M).
$$ 
\end{cor}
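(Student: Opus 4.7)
The plan is to prove by induction on $j \geq 0$ the identity
\begin{equation*}
w(M) = \sum_{i=1}^{j} w(\T_i) + R_j,
\end{equation*}
where $R_j$ denotes the total weight of the configurations still carrying at least one doubled edge after Step $j$ of the complete algorithm has been completed. With the convention $R_0 = w(M)$ and empty first sum for $j=0$, the base case is tautological. Termination at time $T$ means precisely that $R_T = 0$, so the identity at $j = T$ is the desired equality $w(\T) = \sum_{i=1}^T w(\T_i) = w(M)$.

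For the inductive step from $j$ to $j+1$, I would apply Step 1 of Section \ref{sec:sec24} individually to each active configuration $M_{\gamma_{\ell_1^1},\dots,\gamma_{\ell_1^j}}$ contributing to $R_j$, that is, to each tuple with every $\gamma_{\ell_1^i}$ lying in the corresponding $\widetilde{\Gamma}$. The key observation is that the proof of Corollary \ref{cor:1} invokes only the skew-symmetry of $A^R$ and the vanishing row-sum condition $\sum_{k \in V^R} a_{ik} = 0$, together with the bookkeeping of the sign factor $(-1)^{|\D|-(k_1+\dots+k_{j+1})}$ and the permutation sign $\sgn(\sigma_{M_0(\cdot)})$ recorded in \eqref{equ:weight}. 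None of these ingredients depend on the history of earlier openings, so the three-step argument of Lemmas \ref{lem:1}, \ref{lem:2} and Corollary \ref{cor:1} applies verbatim to give
\begin{equation*}
w(M_{\gamma_{\ell_1^1},\dots,\gamma_{\ell_1^j}}) = w(\S_{j+1}(\gamma_{\ell_1^1},\dots,\gamma_{\ell_1^j})).
\end{equation*}
Splitting $\S_{j+1}(\gamma_{\ell_1^1},\dots,\gamma_{\ell_1^j})$ into the configurations whose doubled-edge set has become empty (which join $\T_{j+1}$) and those still carrying a doubled edge (which enter $R_{j+1}$), then summing over all active tuples at stage $j$, converts the identity at stage $j$ into the identity at stage $j+1$.

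The algorithm terminates in finitely many steps because the doubled-edge count strictly decreases along every branch and $|\D|$ is finite, so the induction reaches $j = T$. The main obstacle I anticipate is not mathematical but purely notational: one must verify that the re-orientations of edges of $M_0$ performed during Step $j+1$, combined with the telescoping factor $(-1)^{|\D|-\sum k_i}$ and the sign conventions encoded in \eqref{equ:gamma1}, reproduce exactly the weight formula \eqref{equ:weight1} of the output configurations. Once this bookkeeping is in place, the content beyond Corollary \ref{cor:1} is a single application of the sum-to-zero identity per opening, so the corollary follows by induction.
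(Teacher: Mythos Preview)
Your proposal is correct and is essentially the same approach as the paper's: the paper simply states that the result follows ``by iterating the argument of Section~\ref{sec:weight}'', and your induction on $j$ with remainder term $R_j$ is exactly that iteration written out in full. The only difference is that you have made explicit the bookkeeping (splitting $\S_{j+1}$ into $\T_{j+1}$ and the next remainder, and noting that Corollary~\ref{cor:1} applies verbatim to each active configuration), which the paper leaves to the reader.
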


\subsection{Geometric characterization of
configurations}\label{sec:sec26}

Consider the superimposition $M_0\cup M$, recall that
$\C$ denotes the set of
cycles of length $\geq 4$ of $M_0\cup M$, and that $\D$ denotes its set of
doubled edges. Consider the complete algorithm with initial
superimposition $M_0\cup M$ in the case where $M_0\cup M$ contains doubled
edges, that is, when $\D\neq\emptyset$. Let
$j\in\{1,\dots,T\}$,
$(\gamma_{{\ell}_1^1},\dots,\gamma_{{\ell}_1^j})\in\Gamma_j$, and
$M_{\gamma_{{\ell}_1^1},\dots,\gamma_{{\ell}_1^j}}\in\T$ be a generic output;
and
denote by $F_{\gamma_{{\ell}_1^1},\dots,\gamma_{{\ell}_1^j}}$ the
superimposition
$M_0 \cup M_{\gamma_{{\ell}_1^1},\dots,\gamma_{{\ell}_1^j}}$. In order to
simplify
notations, we introduce:
\begin{equation*}\label{equ:F}
\forall i\in\{1,\dots,j\},\quad F_i:=
F_{\gamma_{{\ell}_1^1},\dots,\gamma_{{\ell}_1^i}}.
\end{equation*} 
One should keep in mind that the index $j$ refers to the last step of the
algorithm, and that indices $i\in\{1,\dots,j-1\}$ refer to intermediate steps.
As a consequence of the algorithm, see \eqref{equ:gamma1}, the configuration
$F_j$ and the paths
$\gamma_{{\ell}_1^1},\dots,\gamma_{{\ell}_1^j}$ satisfy the following
properties.

$\bullet$ The oriented edge configuration $F_j$:
\begin{enumerate}
\item[(I)] has one outgoing edge at every vertex of $V$. It consists of the
paths $\gamma_{{\ell}_1^1},\dots,\gamma_{{\ell}_1^j}$ and of the cycles $\C$ of
the initial superimposition $M_0\cup M$;
\item[(II)] has no doubled edge of $\D$ since the complete algorithm precisely
stops when this is the case.
\end{enumerate}
For every $i\in\{1,\dots,j\}$, the path $\gamma_{{\ell}_1^i}$ satisfies the
following.
\begin{enumerate}
 \item[(III)] It has even length $2k_i$ for some $k_i\in\{1,\dots,m_i\}$ and is
alternating. It starts from the vertex ${\ell}_1^{i}$, followed by an edge of
$M_0$.
\item[(IV)] The vertex ${\ell}_1^i$ is the smallest vertex
belonging to a doubled edge of
$F_{i-1}$ (understood as
$M_0\cup M$ when $i=1$).
The $2k_i$ first vertices are all distinct and the last vertex
${\ell}_{k_i+1}^i$ belongs to:
$$\Bigl(V_{{\ell}_{k_i}^i}^{\D_{\gamma_{{\ell}_1^1},\dots,\gamma_{{\ell}_1^{i-1
}},{\ell}_1^i,
\dots , {\ell}_{k_i}^i}}\Bigr)^c=
V_{{\ell}_{k_i}^i}\cap\bigl\{R\cup V^{\C}\cup
V^{\gamma_{{\ell}_1^1},\dots,\gamma_{{\ell}_1}^{i-1}}\cup
\{{\ell}_1^i,({\ell}_1^i)',\dots,{\ell}_{k_i}^i,({\ell}_{k_i}^i)'\}.
$$
As a consequence, one of the following 5 cases holds. 

$\bullet$ If ${\ell}_{k_i+1}^i\in
R\cup
V^{\C}\cup\{{\ell}_1^i,({\ell}_1^i)',\dots,{\ell}_{k_i}^i,({\ell}_{k_i}^i)'\}
\bigr\}$, then $\gamma_{{\ell}_1^i}$ consists of a new connected component, and
we
recover the three cases obtained
after Step 1 of the algorithm, replacing $\gamma_{{\ell}_1}$ by
$\gamma_{{\ell}_1^i}$, see Section \ref{sec:output}. For convenience of the
reader, we repeat these cases here.
\begin{enumerate}
\item[(IV)(1)] If ${\ell}_{k_i+1}^i\in R$, then $\gamma_{{\ell}_1^i}$ is a
loopless
oriented path from ${\ell}_1^i$ to one of the root vertices of $R$. Since
$R=\{n+1,\dots,n+r\}$, ${\ell}_1^i$ is smaller than all vertices of
~$\gamma_{{\ell}_1^i}$. The vertex ${\ell}_1^i$ is a leaf of a connected
component of
$F_i$, which consists of the path $\gamma_{{\ell}_1^i}$.
 
\item[(IV)(2)] If ${\ell}_{k_i+1}^i\in V^{\C}$, then $\gamma_{{\ell}_1^i}$ is a
loopless oriented path ending at a vertex of one of the cycles of $\C$. The
vertex ${\ell}_1^i$ is smaller than the $2k_i$ first vertices of the path, but
cannot be compared to vertices of the cycle of $\C$. By construction of the
orientation of $M_0\cup M$, see Section \ref{sec:sec21},
the orientation of the cycle is compatible with that of the edge $(i_1,i_2)$,
where $i_1$ is the smallest vertex of the cycle and $i_2$ is its partner in
$M_0$. The vertex ${\ell}_1^i$ is a leaf of a connected component of $F_i$,
which is a unicycle with $\gamma_{{\ell}_1^i}$ as unique branch and a cycle of
$\C$
as cycle.

\item[(IV)(3)]
When ${\ell}_{k_i+1}^i\in
\{{\ell}_1^i,({\ell}_1^i)',\dots,{\ell}_{k_i}^i,({\ell}_{k_i}^i)'\}$,
then $\gamma_{{\ell}_1^i}$ contains a loop of length~$\geq~3$. Recall that
configurations with odd
cycles cancel because of the skew-symmetry of the matrix, so that we only
consider configurations where ${\ell}_{k_i+1}={\ell}_s^i$ for some
$s\in\{1,\dots,k_i\}$. In this case, the part of the path
$\gamma_{{\ell}_1^i}$ to the loop has even length, is alternating and start
with an edge of $M_0$. The loop has even length $\geq 4$, is alternating and its
orientation is compatible with the
orientation of the edge $({\ell}_s^i,{{\ell}_s^i}^{'})$. The vertex ${\ell}_1^i$
is smaller
than all vertices of the path to the cycle and smaller than all vertices of the
cycle.

If ${\ell}_{k_i+1}^i\neq {\ell}_1^i$, then ${\ell}_1^i$ is a leaf of a connected
component
of $F_i$ which is a unicycle with a unique branch, consisting of the
path $\gamma_{{\ell}_1^i}$.

If ${\ell}_{k_i+1}^i={\ell}_1^i$, then ${\ell}_1^i$ is the smallest vertex of a
connected
component of $F_i$ which is a cycle, consisting of the path
$\gamma_{{\ell}_1^i}.$ 
\end{enumerate}

$\bullet$
If ${\ell}_{k_i+1}^i\in V^{\gamma_{{\ell}_1^1},\dots,\gamma_{{\ell}_1^{i-1}}}$
then, the path $\gamma_{{\ell}_1^i}$ attaches itself to a connected component of
$F_{i-1}$, this can only occur
when $i\geq 2$, and one of the following happens. 
\begin{enumerate}
\item[(IV)(4)] The path
$\gamma_{{\ell}_1^i}$ attaches itself to a leaf of $F_{i-1}$, that is,
${\ell}_{k_j+1}={\ell}_1^t$ for some $t\in\{1,\dots,i-1\}$. Then
$\gamma_{{\ell}_1}^i$ is a loopless oriented path from ${\ell}_1^i$ to
${\ell}_1^t$. The vertex ${\ell}_1^i$ is
smaller than the $2k_i$ following ones, but greater than than ${\ell}_1^t$.
Indeed
${\ell}_1^t$ is the starting point of a previous step of the algorithm.
\emph{This
allows to identify the ending vertex of the path $\gamma_{{\ell}_1}^i$}. The
vertex ${\ell}_1^i$ is a leaf of $F_i$.
\item[(IV)(5)] The path $\gamma_{{\ell}_1^i}$ creates a new branch of the
component.
Then $\gamma_{{\ell}_1^i}$ is a loopless oriented path. The vertex ${\ell}_1^i$
is
smaller than the $2k_i$ following ones, but
we have no a priori information on the last vertex of the path. The last
vertex of the path $\gamma_{{\ell}_1^i}$ is nevertheless identified as being a
fork.
The vertex ${\ell}_1^i$ is a leaf of $F_i$. Note that the
component of ${\ell}_1^i$ might be a unicycle with a unique branch. If this is
the case, the branch is the path $\gamma_{{\ell}_1^i}$ and the cycle was created
by
Case (IV)(3) in a previous step of the algorithm. The vertex ${\ell}_1^i$ is
thus
larger than the smallest vertex of the cycle.
\end{enumerate}
\end{enumerate}

\begin{lem}\label{lem:N1}
The oriented edge configuration $F_j$ is an RCRSF
compatible with~$M_0$.
\end{lem}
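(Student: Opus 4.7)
My plan is to verify in turn each defining condition of an RCRSF compatible with $M_0$, using the Properties~(I)--(IV) of $F_j$ and the paths $\gamma_{{\ell}_1^i}$ catalogued above. First I would establish the out-degree structure: by Property~(I), every vertex of $V$ has exactly one outgoing edge in $F_j$, so $F_j$ spans $V$. On the other hand, vertices of $R$ appear in $F_j$ only as terminal vertices ${\ell}_{k_i+1}^i$ via Case~(IV)(1), in which case the final edge of $\gamma_{{\ell}_1^i}$ is oriented into $R$; hence every vertex of $R$ has out-degree zero in $F_j$.

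Next I would verify the edge counts needed for compatibility with $M_0$. The initial superimposition $M_0\cup M$ contains $|V|/2$ edges of $M_0$ and $|V|/2$ edges of $M$. Each iteration of the algorithm replaces an $M$-edge $({\ell}_k',{\ell}_k)$ doubling an $M_0$-edge by a new edge $({\ell}_k',{\ell}_{k+1})$; since ${\ell}_{k+1}\in V_{{\ell}_k}$ is by construction distinct from ${\ell}_k$ while ${\ell}_k'$ is matched in $M_0$ to ${\ell}_k$, the new edge cannot lie in $M_0$. Combined with Property~(II), which asserts that no doubled edge of $\D$ survives in $F_j$, this shows that $F_j$ decomposes as the disjoint union of the $|V|/2$ edges of $M_0$ and $|V|/2$ edges of $E^R\setminus M_0$.

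It remains to analyze the connected components and the cycles. Since every $v\in V$ has out-degree exactly $1$ and every $v\in R$ has out-degree $0$, following outgoing edges from any vertex of $V$ one eventually reaches either a vertex of $R$ or a directed cycle; the standard functional-graph argument then shows that each connected component of $F_j$ is either a tree rooted on $R$ or a unicycle, with edges oriented towards the sink. The cycles of $F_j$ are of two types. Cycles of $\C$ are untouched by the algorithm, since $V^{\C}\cap V^{\D}=\emptyset$, and are alternating of even length $\geq 4$ by the construction of the oriented superimposition. Cycles produced by Case~(IV)(3) arise when some $\gamma_{{\ell}_1^i}$ loops back onto a previous vertex ${\ell}_s^i$; odd closings are discarded by skew-symmetry, and even closings inherit the $M_0$/non-$M_0$ alternation of $\gamma_{{\ell}_1^i}$ and have length $\geq 4$ because ${\ell}_{k_i+1}^i\in V_{{\ell}_{k_i}^i}$ is distinct from ${\ell}_{k_i}^i$. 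The main subtlety is to check that the attaching Cases~(IV)(4)--(IV)(5) do not accidentally create new cycles: this is automatic because $\gamma_{{\ell}_1^i}$ is then a loopless oriented path landing on an existing vertex of $F_{i-1}$ whose out-degree is preserved in $F_i$, so the functional-graph structure, and hence the tree/unicycle classification, is preserved throughout the algorithm.
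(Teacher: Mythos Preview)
Your proof is correct and follows essentially the same approach as the paper's: you use Property~(I) for the out-degree-one structure, the edge-replacement bookkeeping together with Property~(II) for compatibility with $M_0$, and the case analysis (IV)(2)--(IV)(3) to check that all cycles are alternating of even length $\geq 4$. Your version is somewhat more explicit than the paper's---you spell out the functional-graph argument that the paper compresses into the single phrase ``equivalent to saying that it is an RCRSF,'' and you explicitly observe that the attaching cases (IV)(4)--(IV)(5) cannot create new cycles---but the underlying logic is the same.
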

\begin{proof}
By definition of the algorithm, the oriented edge configuration $F_j$ contains
as many edges as $M_0\cup M$, that is $|V|$ edges. By
definition, it contains all edges of $M_0$, that is $|V|/2$ edges, and by the
algorithm no doubled edges of $\D$, that is $|V|/2$ edges not in $M_0$.

By Point (I) the oriented edge configuration $F_j$ has one outgoing edge at
every vertex of $V$, which is equivalent to saying that it is an RCRSF such
that edges of each component are oriented towards its root, and
cycles are oriented in one of the two possible directions. It thus remains to
show that cycles of unicycles are alternating, and have even length $\geq 4$.
By Point (II), the oriented edge configuration $F_j$ has no doubled edge of
$\D$, thus if $F_j$ has a cycle, it 
either comes from Point (IV)(2) meaning that it is a cycle of $\C$ implying that
it is even, alternating and has length $\geq 4$; or from Point
(IV)(3), when it is created by the algorithm. Returning to the description
of Point (IV)(3) and recalling that the contribution of configurations with
odd cycles cancel, we know that it has
the same properties in this case.
\end{proof}

For every $i\in\{1,\dots,j\}$, and for every connected component of $F_i$ which
is a cycle $C$ created by the
algorithm (\emph{i.e.} not a cycle of the
initial superimposition), denote by $m_C$
the smallest vertex of $C$. Define $x_i$ to be:
$$
x_i=
\begin{cases}
\max\{m_C:\,C \text{ is a cycle-connected component of } F_i,\text{ but not of
}\C\}&\text{ if $\{\}\neq\emptyset$}\\
-\infty&\text{ otherwise}.
\end{cases}
$$
If $F_i$ has at least one leaf, let $y_i$ be the maximum leaf of
$F_i$, else let $y_i=-\infty$. 

If both $x_i$ and $y_i$ are $-\infty$, then $F_i$ has no leaves and only
contains cycles of the initial superimposition $M_0\cup M$. This means that
the set $\D$ is empty, and 
that $F$ is the initial superimposition $M_0\cup M$. 
This has been excluded
here, since the complete algorithm
doesn't even start the opening of edges procedure in this case. Thus
$\max\{x_i,y_i\}>-\infty$.

\begin{lem}\label{lem:charactInitial}
For every $i\in\{1,\dots,j\}$,
the initial vertex ${\ell}_1^i$ of Step $i$ is the maximum of $x_i$ and $y_i$. 
\end{lem}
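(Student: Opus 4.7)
The plan is to prove the two inequalities $\max\{x_i,y_i\}\leq {\ell}_1^i$ and $\max\{x_i,y_i\}\geq {\ell}_1^i$ separately. The key tool is a monotonicity observation: as the algorithm proceeds, doubled edges can only disappear, which will force the starting vertices ${\ell}_1^1,\ldots,{\ell}_1^i$ to be strictly increasing.

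I would first establish this strict increase: for every $s<i$, ${\ell}_1^s<{\ell}_1^i$. Two facts are needed. First, doubled edges cannot be created by the algorithm, because $M_0$ is preserved throughout and every new $M$-edge $(({\ell}_r^t)',{\ell}_{r+1}^t)$ has second endpoint in $V_{{\ell}_r^t}$, hence distinct from ${\ell}_r^t$, so cannot coincide with the unique $M_0$-edge at $({\ell}_r^t)'$. Second, the doubled edge sitting at ${\ell}_1^s$ is destroyed during Step $s$. Combining these, the doubled edges of $F_{i-1}$ are contained in the doubled edges of $F_{s-1}$ with the one at ${\ell}_1^s$ removed. Since ${\ell}_1^s$ was by definition the smallest vertex of a doubled edge of $F_{s-1}$, every doubled-edge vertex of $F_{i-1}$, and in particular ${\ell}_1^i$, is strictly greater than ${\ell}_1^s$.

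I would then show that every leaf and every smallest vertex of an algorithm-created cycle-component of $F_i$ is of the form ${\ell}_1^s$ for some $s\leq i$. The only vertex losing an incoming edge without also gaining one at Step $s$ is the starting vertex ${\ell}_1^s$, because intermediate ${\ell}_r^s$ for $r\geq 2$ both lose and gain an incoming edge along $\gamma_{{\ell}_1^s}$, while ${\ell}_{k_s+1}^s$ only gains one; no vertex outside the path is touched. Algorithm-created cycles arise only in Case (IV)(3), and such a cycle is a cycle-component of $F_i$ exactly when no later step has attached a branch to it; in that situation the smallest vertex of the cycle is exactly ${\ell}_1^s$, as recorded in Case (IV)(3). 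Combined with the strict increase above, this yields $\max\{x_i,y_i\}\leq{\ell}_1^i$.

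The reverse inequality is obtained by inspecting the end of Step $i$. In Cases (IV)(1), (IV)(2), (IV)(4), (IV)(5), and in the subcase of (IV)(3) with ${\ell}_{k_i+1}^i\neq{\ell}_1^i$, the vertex ${\ell}_1^i$ is a leaf of $F_i$, so $y_i\geq{\ell}_1^i$. In the remaining subcase of (IV)(3), with ${\ell}_{k_i+1}^i={\ell}_1^i$, the path $\gamma_{{\ell}_1^i}$ closes into a cycle which is a cycle-component of $F_i$ (no later step has yet had the chance to attach a branch) whose smallest vertex is ${\ell}_1^i$, so $x_i\geq{\ell}_1^i$. Either way, $\max\{x_i,y_i\}\geq{\ell}_1^i$, completing the proof. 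The main delicate point is the cycle-versus-unicycle bookkeeping in Case (IV)(3), but this is handled cleanly by the explicit descriptions in Section~\ref{sec:sec26}.
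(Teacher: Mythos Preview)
Your proof is correct and follows essentially the same approach as the paper's. The paper argues in the same three steps---$\ell_1^i$ is either a leaf or a cycle-minimum of $F_i$ (your reverse inequality); all leaves and cycle-minima are initial vertices of earlier steps (your containment claim); and $\ell_1^i$ exceeds all earlier $\ell_1^s$ (your monotonicity)---but states each in a single clause (``arguing by induction'', ``by construction''), whereas you spell out the mechanism for why no new doubled edges or unexpected leaves are created.
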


\begin{proof}
By Point (IV) above, the vertex ${\ell}_1^i$ is either a leaf of $F_i$
or the smallest vertex of a connected component of $F_i$ which is a
cycle created by the algorithm, meaning that it is not a cycle of $\C$
\emph{i.e.} not a cycle of the initial superimposition $M_0\cup M$. Arguing by
induction, all leaves and smallest vertices of cycle-components of $F_i$
which are not present in $\C$,
must be initial vertices of steps $i$ of the algorithm for
some $i\in\{1,\dots,j\}$. Moreover by
construction, the vertex ${\ell}_1^i$ is larger than all previous initial steps
of
the algorithm, thus proving the lemma.
\end{proof}

Properties described in Point(IV) also characterize the path
$\gamma_{{\ell}_1^i}$
once the initial vertex ${\ell}_1^i$ is fixed. This can be summarized in the
following lemma.

\begin{lem}\label{lem:charactPath}
Let ${\ell}_1^i$ be the initial vertex of Step $i$. Then:
\begin{itemize}
\item Suppose that ${\ell}_1^i$ is a leaf of a connected component of
$F_i$. When the connected component is a unicycle rooted on a cycle
created by the algorithm, we assume moreover that it contains more than one
branch. Then, we are in Cases (IV)(1)(2) or (5)
and the path $\gamma_{{\ell}_1^i}$ is characterized as the subpath of 
the unique path from ${\ell}_1^i$ to the root of the connected component,
stopping
the first time one visits a vertex which: belongs to $R$ or to
the cycle of the component; is a fork; is smaller than ${\ell}_1^i$.
\item Suppose that ${\ell}_1^i$ is the leaf of a unicycle of $F_i$
rooted on a cycle
created by the algorithm and containing a unique branch. 
If ${\ell}_1^i$ is larger
than the smallest vertex of the cycle, then we are in Case (IV)(5) and
the path $\gamma_{{\ell}_1^i}$ is the
path from ${\ell}_1^i$ to the cycle, stopping when the cycle is reached. Else,
if
${\ell}_1^i$ is smaller than the smallest vertex of the cycle, we are in Case
(IV)(3) and the path
$\gamma_{{\ell}_1^i}$ is the path from ${\ell}_1^i$ to the cycle, followed by
the cycle,
with the orientation specified in~(IV)(3).
\item If ${\ell}_1^i$ is the smallest vertex of a connected component of
$F_i$ which is a cycle created by the algorithm, then we are in Case
(IV)(3) and the path
$\gamma_{{\ell}_1^i}$ is the cycle, with the orientation specified in~(IV)(3).
\end{itemize}
\end{lem}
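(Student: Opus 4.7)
My plan is to invert the algorithm: given $F_i$ and the initial vertex $\ell_1^i$ of Step $i$, I recover $\gamma_{\ell_1^i}$ by following the unique alternating path in $F_i$ that starts at $\ell_1^i$ with an $M_0$-edge and terminates at the first vertex meeting the algorithm's stopping condition. The first edge is well defined because $\ell_1^i$ is covered by $M_0$ and, by Property (III), the path starts with an $M_0$-edge. At every subsequent intermediate vertex the path has a unique outgoing edge in $F_i$ (intermediate vertices of $\gamma_{\ell_1^i}$ are neither leaves nor forks), so the geometric content of the lemma reduces to translating the algebraic stopping condition ${\ell}_{k+1}^i \in (V_{{\ell}_{k}^i}^{\D_{\gamma_{{\ell}_1^1},\dots,\gamma_{{\ell}_1^{i-1}},{\ell}_1^i,\dots,{\ell}_{k}^i}})^c$ into a criterion expressible purely in terms of $F_i$.

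For the first bullet, I would use the explicit decomposition of this complement given in Point (IV), namely $V_{{\ell}_{k_i}^i}\cap\bigl(R\cup V^{\C}\cup V^{\gamma_{{\ell}_1^1},\dots,\gamma_{{\ell}_1^{i-1}}}\cup\{{\ell}_1^i,({\ell}_1^i)',\dots,{\ell}_{k_i}^i,({\ell}_{k_i}^i)'\}\bigr)$. Under the hypothesis of the first bullet (the component of $\ell_1^i$ is either not a unicycle on an algorithm-created cycle, or it is one with more than one branch), termination through Case (IV)(3) is excluded, so only the four remaining sub-cases can occur. Matching each of them to its geometric fingerprint yields exactly the four stopping criteria in the lemma: reaching $R$ (Case (IV)(1)), reaching a cycle vertex of the component (Case (IV)(2) or (IV)(5)), reaching a fork (Case (IV)(5)), and reaching a vertex smaller than $\ell_1^i$ (Case (IV)(4), since previous initial vertices $\ell_1^t$ with $t<i$ are smaller than $\ell_1^i$ by construction).

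For the second bullet, the path traverses the unique branch of the unicycle and then reaches the cycle. I would argue by induction on $i$ that any algorithm-created cycle already present in $F_{i-1}$ has its minimum vertex equal to the initial vertex $\ell_1^t$ of some prior step $t<i$, hence strictly less than $\ell_1^i$. Consequently, if $\ell_1^i>m_C$, the cycle must have been created at a prior step, and the stopping criterion ``vertex smaller than $\ell_1^i$'' fires at the first cycle-vertex encountered, placing us in Case (IV)(5); whereas if $\ell_1^i<m_C$, the cycle is created within Step $i$ itself, and $\gamma_{\ell_1^i}$ traverses it entirely with the orientation dictated by the $M_0$-edge at $\ell_1^i$, matching Case (IV)(3). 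The third bullet is the degenerate form of the latter: the path starts directly on the cycle at $\ell_1^i=m_C$ and closes up on itself.

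The main obstacle I anticipate is the dichotomy in the second bullet, which forces one to distinguish algorithm-created cycles already present in $F_{i-1}$ from those freshly created in Step $i$. Handling it cleanly relies on the auxiliary inductive statement that the smallest vertex of any algorithm-created cycle is precisely the initial vertex of the step that created it, which itself is a direct consequence of Case (IV)(3) applied to that earlier step together with Lemma~\ref{lem:charactInitial}.
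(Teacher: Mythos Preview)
The paper gives no explicit proof of this lemma; it is introduced with the sentence ``Properties described in Point (IV) also characterize the path $\gamma_{\ell_1^i}$ once the initial vertex $\ell_1^i$ is fixed. This can be summarized in the following lemma'' and then simply stated. Your strategy of reading off the stopping rule from the decomposition of $(V_{\ell_{k_i}^i}^{\D_{\ldots}})^c$ listed in Point (IV) is therefore exactly the paper's intended approach, only made more explicit.

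That said, there is a genuine gap in your argument for the second bullet. Your inductive claim that ``any algorithm-created cycle already present in $F_{i-1}$ has its minimum vertex equal to the initial vertex $\ell_1^t$ of some prior step $t<i$'' is false in general. In Case (IV)(3) the path $\gamma_{\ell_1^t}$ can create a cycle \emph{together with a nontrivial branch}, namely when $\ell_{k_t+1}^t=\ell_s^t$ with $s>1$: then $\ell_1^t$ sits on the branch, not on the cycle, and the minimum cycle vertex is $m_C=\ell_s^t>\ell_1^t$, not $\ell_1^t$. In that situation a later step $i$ may attach via Case (IV)(4) to the leaf $\ell_1^t$, producing in $F_i$ a unicycle with a unique branch whose leaf $\ell_1^i$ satisfies $\ell_1^t<\ell_1^i$ but with no definite relation to $m_C$; in particular one can have $\ell_1^i<m_C$ while the cycle was created at the earlier step $t$ and $\gamma_{\ell_1^i}$ stops at $\ell_1^t$, not at the cycle. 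Your dichotomy ``$\ell_1^i>m_C$ forces a prior step, $\ell_1^i<m_C$ forces the current step'' therefore does not hold, and the argument needs to also track the ``smaller than $\ell_1^i$'' stopping criterion along the unique branch before the cycle is reached.
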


\begin{rem}\label{rem:rem1}
If the initial superimposition $M_0\cup M$ consists of cycles only, that is, if
the set $\D$ is empty, then the output of the complete algorithm is $F=M_0\cup M$,
which consists of alternating cycles of even length $\geq 4$. The orientation
of cycles is specified in Section~\ref{sec:sec21}, and cycles are oriented
in one of the possible two directions. In this case also, $F$ is an RCRSF
compatible with $M_0$. 
\end{rem}

\section{Proofs and corollaries}\label{sec:sec3}

We now prove Theorem \ref{thm:main}, Corollary \ref{cor:main} and state and
prove the line bundle version of the result.

\subsection{Proof of Theorem \ref{thm:main}}\label{sec:sec31}

In this section, we prove Theorem \ref{thm:main}. Let $M_0$ be a reference
perfect matching of $G$, and let $F$ be an RCRSF compatible with $M_0$,
containing $k_F$ unicycles. In Lemma \ref{lem:alpha}, we suppose that $F$ is an
output of the complete algorithm and identify $2^{k_F}$ possible perfect matchings
$M$ for
the initial superimposition $M_0\cup M$. Then, we introduce a partial reverse
algorithm used to define Condition (C) for RCRSFs compatible with $M_0$. In
Proposition \ref{thm:thm2}, we prove that an RCRSF compatible with $M_0$ is an
output of the complete algorithm if and only if it satisfies Condition (C), and if
this is the case, it is obtained $2^{k_F}$ times. The remainder
of the proof consists in showing that contribution of RCRSFs containing
unicycles cancel, and that only spanning forests remain with the appropriate
weight, thus proving Theorem~\ref{thm:main}.

Let $F$ be an RCRSF compatible with $M_0$, and let $k_F$ denote the number of
unicycles it contains. If
$k_F\neq 0$, we let $\{C_1,\dots,C_{k_F}\}$ be its set of cycles. For every
$(\eps_1,\dots,\eps_{k_F})\in\{0,1\}^{k_F}$,
define the edge configuration $M^{(\eps_1,\dots,\eps_{k_F})}$ as follows:
$$
M^{(\eps_1,\dots,\eps_{k_F})}=
\begin{cases}
\text{edges of $M_0$ on branches of $F$}\\
\text{edges of $M_0$ on the cycle $C_j$, when $\eps_j=0$}\\
\text{edge of $F\setminus M_0$ on the cycle $C_j$, when $\eps_j=1$}. 
\end{cases}
$$
If $k_F=0$, then the set of cycles of $F$ is $\{\emptyset\}$, and we set
$M^{(\eps_1,\dots,\eps_{k_F})}=M_0$.

\begin{lem}\label{lem:epsilonDimer}
For every \emph{RSCRSF} $F$ compatible with $M_0$, and
every $(\eps_1,\dots,\eps_k)\in\{0,1\}^{k_F}$, the edge
configuration $M^{(\eps_1,\dots,\eps_{k_F})}$ is a perfect matching of
$G$. 
\end{lem}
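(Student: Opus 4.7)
The plan is to show that each vertex $v \in V$ is incident to exactly one edge of $M^{(\eps_1,\dots,\eps_{k_F})}$, splitting into cases according to where the $M_0$-edge at $v$ lies in $F$. The easy setup observation is that since $M_0$ is a perfect matching of $G$ and $F$ contains $M_0$, every $v\in V$ is incident to a unique edge $e_v\in M_0$, and $e_v\subset F$. Moreover, since cycles of unicycles are cycles of $G$ by definition of an RCRSF, all edges appearing in $M^{(\eps_1,\dots,\eps_{k_F})}$ join vertices of $V$, so the configuration is a subset of $E(G)$. The trivial base case $k_F=0$ gives $M^{(\eps_1,\dots,\eps_{k_F})}=M_0$, which is a perfect matching by assumption, so assume $k_F\geq 1$.

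\textbf{Case A: $e_v$ is not on any cycle of $F$.} By definition of $M^{(\eps_1,\dots,\eps_{k_F})}$, branch edges of $M_0$ are always included, hence $e_v\in M^{(\eps_1,\dots,\eps_{k_F})}$. I would then show that $v$ itself is not on any cycle: if $v$ lay on some cycle $C_j$ of a unicycle, then, since $C_j$ alternates between edges of $M_0$ and $F\setminus M_0$, exactly one of the two $C_j$-edges incident to $v$ would belong to $M_0$; as $M_0$ is a matching this edge must equal $e_v$, contradicting the hypothesis. Consequently, no cycle edge of $M^{(\eps_1,\dots,\eps_{k_F})}$ is incident to $v$, and $v$ is covered exactly once, by $e_v$.

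\textbf{Case B: $e_v$ lies on some cycle $C_j$.} Then $v\in C_j$, and because $C_j$ alternates, $v$ has exactly two edges of $C_j$ incident to it: the $M_0$-edge $e_v$ and a unique edge $f_v\in F\setminus M_0$. By construction, $M^{(\eps_1,\dots,\eps_{k_F})}$ contains $e_v$ if $\eps_j=0$ and $f_v$ if $\eps_j=1$, but not both. No other cycle edge of $M^{(\eps_1,\dots,\eps_{k_F})}$ can touch $v$, because $v$ belongs to at most one connected component of $F$ and each unicycle contains a single cycle; and the only branch $M_0$-edge at $v$ would have to be $e_v$ itself, which here is a cycle edge. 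Hence $v$ is covered by exactly one edge.

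Combining the two cases, every vertex of $V$ is incident to exactly one edge of $M^{(\eps_1,\dots,\eps_{k_F})}$, proving it is a perfect matching of $G$. There is no real obstacle beyond the structural bookkeeping; the key input is the alternation of cycles in a compatible RCRSF together with the uniqueness of the $M_0$-partner of each vertex, both of which are built into the definition of compatibility.
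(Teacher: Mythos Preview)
Your proof is correct and follows essentially the same approach as the paper: both split into the cases of vertices off the cycles versus vertices on a cycle, and use the alternation of cycles in a compatible RCRSF together with the fact that $M_0$ is a perfect matching to conclude each vertex is covered exactly once. Your version is slightly more explicit (you justify why a cycle vertex cannot also be hit by a branch $M_0$-edge, which the paper leaves implicit), but the argument is the same.
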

\begin{proof}
If $F$ contains no unicycles, $M^{(\eps_1,\dots,\eps_{k_F})}=M_0$ and this is
immediate. Suppose $k_F~\neq~0$.
Since $M_0$ is a perfect matching, and since the restriction of $M_0$ and
the restriction of $M^{(\eps_1,\dots,\eps_k)}$ to branches of $F$ are the same,
all vertices of $V\setminus\{V(C_1),\dots,V(C_{k_F})\}$ are incident to
exactly one
edge of $M^{(\eps_1,\dots,\eps_{k_F})}$. Moreover, by assumption for every
$j$,
the cycle $C_j$ is alternating, implying that each vertex of $V(C_j)$ is
incident to exactly one edge of the restriction of $M_0$ and one edge of the
restriction of $F\setminus M_0$ to $C_j$. As a consequence, every vertex of $V$
is incident to exactly one edge of $M^{(\eps_1,\dots,\eps_{k_F})}$, proving
that
it is a perfect matching of $G$.
\end{proof}

\begin{lem}\label{lem:alpha}
Let $F$ be the superimposition of $M_0$ and of an output of the complete
algorithm. Then, the perfect matching $M$
of the initial superimposition $M_0\cup M$, must be equal to
$M^{(\eps_1,\dots,\eps_{k_F})}$ for some
$(\eps_1,\dots,\eps_{k_F})\in\{0,1\}^{k_F}$.
\end{lem}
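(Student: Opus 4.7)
The plan is to read off the initial matching $M$ directly from the geometry of $F$, using two invariants of the algorithm. The first invariant is that each opening operation of Section~\ref{sec:sec23} preserves the $M_0$-edge of the doubled edge it opens and only replaces the overlying $M$-copy; in particular, $M_0$ is untouched throughout, and the only $M$-edges ever modified are the $M$-copies of doubled edges of $\D$. The second invariant, Point~(II) of Section~\ref{sec:sec26}, asserts that $F$ contains no doubled edge of $\D$, equivalently every doubled edge of $\D$ is actually consumed by the algorithm.

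I would first examine the branches of $F$. Every branch is a concatenation of paths $\gamma_{\ell_1^i}$, and each such path has the alternating form $\ell_1^i,(\ell_1^i)',\ldots,\ell_{k_i}^i,(\ell_{k_i}^i)',\ell_{k_i+1}^i$, with each pair $(\ell_s^i,(\ell_s^i)')$ being the two endpoints of a doubled edge of $\D$. Since these pairs are, by definition of a doubled edge, partners in both $M_0$ and $M$, the matching $M$ agrees with $M_0$ on every branch of $F$. Next, by Lemma~\ref{lem:N1} and Cases~(IV)(2)--(3), each cycle $C_j$ of the unicycles of $F$ is either a cycle of $\C$ (Case~(IV)(2)) or a cycle created by the algorithm via Case~(IV)(3) with $\ell_{k_i+1}^i=\ell_1^i$. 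In the first case, $C_j$ involves no doubled edge, is never modified by the algorithm, and the restriction of $M$ to $C_j$ is the alternating set of $F\setminus M_0$-edges of $C_j$; setting $\eps_j=1$ reproduces the restriction of $M^{(\eps_1,\dots,\eps_{k_F})}$ to $C_j$. In the second case, the same bookkeeping as for branches shows that the restriction of $M$ to $C_j$ coincides with that of $M_0$, corresponding to $\eps_j=0$.

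To conclude, since $M_0\cup M$ covers every vertex of $V$ either as an endpoint of a doubled edge of $\D$ or as a vertex of a cycle of $\C$, and since these are precisely the vertices lying on branches and algorithm-cycles, respectively on cycles of type~(IV)(2) of $F$, the description above determines $M$ on all of $V$ and yields $M=M^{(\eps_1,\dots,\eps_{k_F})}$. The main (and essentially only) subtlety is to ensure that every doubled edge of $\D$ is really consumed by the algorithm, but this is exactly Point~(II) of Section~\ref{sec:sec26}; the rest is a direct bookkeeping of $M_0$-edges against newly-created edges.
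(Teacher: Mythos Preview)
Your argument is correct, and it proceeds constructively rather than by contradiction as in the paper. The paper's proof is shorter: it simply notes that any cycle of $\C$ in the initial superimposition $M_0\cup M$ is never touched by the algorithm and therefore persists in $F$; if $M$ were not of the form $M^{(\eps_1,\dots,\eps_{k_F})}$ then $M_0\cup M$ would contain a cycle outside $\{C_1,\dots,C_{k_F}\}$, hence so would $F$, a contradiction. Your approach instead partitions $V=V^{\D}\cup V^{\C}$ and reads $M$ off directly on each piece, with the payoff that you explicitly identify the tuple: $\eps_j=1$ when $C_j$ is a cycle of $\C$, and $\eps_j=0$ when $C_j$ was created by the algorithm. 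This extra information is something the paper only recovers later, inside the complete reverse algorithm of Proposition~\ref{thm:thm2}. One minor imprecision: in Case~(IV)(3) the loop may close at $\ell_s^i$ for any $s\in\{1,\dots,k_i\}$, not only at $s=1$; this does not affect your conclusion, since in every case the cycle vertices all lie in $V^{\D}$.
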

\begin{proof}
If $F$ is an output of the complete algorithm, then by Lemma \ref{lem:N1} and Remark
\ref{rem:rem1}, it is an
RCRSF compatible with $M_0$, so that $\forall
(\eps_1,\dots,\eps_{k_F})\in\{0,1\}^{k_F}$, the perfect matching
$M^{(\eps_1,\dots,\eps_{k_F})}$ is well defined. Suppose that
$F$ is an output of the complete algorithm with initial
superimposition $M_0\cup M$, where $M$ is not $M^{(\eps_1,\dots,\eps_{k_F})}$
for
some $(\eps_1,\dots,\eps_{k_F})\in\{0,1\}^{k_F}$. Then,
$M_0\cup M$ contains at least one cycle $C$ which is not $C_1,\dots,C_{k_F}$.
Returning to the definition of the algorithm, we know that cycles present in
the initial superimposition are also present in the output $F$. This
yields a contradiction since $F$ contains exactly the cycles
$C_1,\dots,C_{k_F}$.
\end{proof}

\underline{\textbf{Partial reverse algorithm}}

\underline{Input}: an RCRSF $F$ compatible with $M_0$ not consisting of cycles
only.

\underline{Initialization}: $F_1=F$. 

\underline{Step $i$, $i\geq 1$}

Let $\bar{\ell}_1^i$ be the largest leaf of $F_i$, and consider the connected
component containing $\bar{\ell}_1^i$. Start from $\bar{\ell}_1^i$ along
the unique path joining $\bar{\ell}_1^i$ to
the root or the cycle of the component, until the
first time one of the following vertices is reached:
\begin{itemize}
\item the root vertex if the component is a tree, or the cycle if it is a
unicycle;
\item a fork;
\item a vertex which is smaller than the leaf $\bar{\ell}_1^i$.  
\end{itemize}
This yields a loopless path $\lambda_{\bar{\ell}_1^i}$ starting from
$\bar{\ell}_1^i$, of length $\geq 1$.
Let $F_{i+1}=F_{i}\setminus\lambda_{\bar{\ell}_1^i}$. If $F_{i+1}$ is
empty or contains cycles only, then stop. Else, go to Step $i+1$.

\underline{End}: since edges are removed at every step and since $F$ contains
finitely
many edges, the algorithm ends in finite time $N$.

\begin{defi}\label{def:N2}
An RCRSF $F$ compatible with $M_0$ is said to satisfy \emph{Condition} (C) if
either $F$ consists of cycles only, or if each of the
paths $\lambda_{\bar{\ell}_1^1},\dots,\lambda_{\bar{\ell}_1^N},$ obtained from
the
partial reverse algorithm has even length and starts from an edge of
$M_0$. 
\end{defi}

\begin{prop}\label{thm:thm2}
Let $F$ be an RCRSF compatible with $M_0$.\\
Then, for every $(\eps_1,\dots,\eps_{k_F})\in\{0,1\}^{k_F}$, $F$ is the
superimposition of $M_0$ and of an output of the complete algorithm, with initial
superimposition $M_0~\cup~
M^{{(\eps_1,\dots,\eps_{k_F})}}$,
if and only if $F$ satisfies Condition \emph{(C)}. The
orientation of cycles of $F$ is specified by the proof.
\end{prop}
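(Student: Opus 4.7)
The proof establishes an explicit correspondence between the steps of the partial reverse algorithm applied to $F$ and those of the complete forward algorithm producing $F$. Two key tools underlie the argument: Lemmas \ref{lem:charactInitial} and \ref{lem:charactPath}, which together show that the forward algorithm is deterministic once the initial superimposition $M_0\cup M$ is specified; and the monotonicity property $\ell_1^1<\ell_1^2<\cdots<\ell_1^j$, which is immediate since each $\ell_1^i$ is the smallest vertex of a remaining doubled edge and that doubled edge is consumed at step $i$. Dually, the reverse algorithm satisfies $\bar{\ell}_1^1>\bar{\ell}_1^2>\cdots>\bar{\ell}_1^N$.

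For the direction $(\Rightarrow)$, suppose $F$ is produced by the complete algorithm applied to $M_0\cup M$ (where $M=M^{(\eps_1,\ldots,\eps_{k_F})}$ for some $\eps$ by Lemma \ref{lem:alpha}), via paths $\gamma_{\ell_1^1},\ldots,\gamma_{\ell_1^j}$. Let $I\subseteq\{1,\ldots,j\}$ denote the set of forward steps that are \emph{not} cycle-only creations (i.e., not Case (IV)(3) with $\ell_{k_i+1}^i=\ell_1^i$), enumerated decreasingly as $I=\{i_1>i_2>\cdots>i_N\}$. I claim, by induction on $s$, that $\bar{\ell}_1^s=\ell_1^{i_s}$ and $\lambda_{\bar{\ell}_1^s}=\gamma_{\ell_1^{i_s}}$. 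The crucial observation is that forward indices strictly between $i_s$ and $i_{s-1}$ all lie in $I^c=\{1,\ldots,j\}\setminus I$, so they are cycle-only creations that neither attach to $\gamma_{\ell_1^{i_s}}$ nor alter its leaf structure. Thus, after removing $\gamma_{\ell_1^{i_1}},\ldots,\gamma_{\ell_1^{i_{s-1}}}$ from $F$, the configuration has $\gamma_{\ell_1^{i_s}}$ free of internal forks, and $\ell_1^{i_s}$ as its largest leaf (by monotonicity); Lemma \ref{lem:charactPath} then identifies the reverse path with $\gamma_{\ell_1^{i_s}}$. Property (III) of Section \ref{sec:sec26} then yields Condition (C), since each $\gamma_{\ell_1^i}$ has even length and starts with an $M_0$-edge.

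For the direction $(\Leftarrow)$, assume $F$ satisfies Condition (C) and fix $\eps\in\{0,1\}^{k_F}$; set $M=M^{(\eps_1,\ldots,\eps_{k_F})}$. The doubled edges of $M_0\cup M$ are the $M_0$-edges of the branches of $F$ together with the $M_0$-edges of the cycles $C_j$ with $\eps_j=0$. Run the reverse algorithm on $F$ to obtain $\lambda_{\bar{\ell}_1^1},\ldots,\lambda_{\bar{\ell}_1^N}$, and let $c_1<\cdots<c_{k_0}$ denote the smallest vertices of the cycles $C_j$ with $\eps_j=0$. The forward algorithm's sequence of initial vertices is the increasing merge of $\{\bar{\ell}_1^N<\cdots<\bar{\ell}_1^1\}$ and $\{c_1<\cdots<c_{k_0}\}$. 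When the next initial vertex is a $\bar{\ell}_1^t$, Condition (C) supplies the even length and $M_0$-start required by Cases (IV)(1), (2), (4), or (5), and Lemma \ref{lem:charactPath} certifies that the forward algorithm indeed produces the path $\lambda_{\bar{\ell}_1^t}$. When it is a $c_k$, Case (IV)(3) with $\ell_{k_i+1}^i=\ell_1^i$ creates $C_j$ as a cycle-only component, oriented out of $c_k$ along its $M_0$-edge; a cycle $C_j$ belonging to a unicycle with additional branches has those branches attached later via Case (IV)(5) at the corresponding reverse-algorithm steps.

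The main obstacle is the inductive verification that, at each forward step, the smallest vertex lying in a remaining doubled edge of $M_0\cup M$ coincides with the expected next element of the merge. This requires tracking carefully, after each forward step, which doubled edges have been consumed (precisely those along the alternating path $\gamma_{\ell_1^i}$ or around the just-created cycle), and invoking monotonicity to certify that the new smallest doubled-edge vertex is either the next unprocessed $\bar{\ell}_1^t$ or the next unprocessed $c_k$. Once this matching is established, Lemma \ref{lem:charactPath} completes the identification of the path at each step, and the orientation of cycles of $F$ is uniquely determined in both cases as out of the smallest vertex along its $M_0$-edge.
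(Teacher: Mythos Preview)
Your overall strategy matches the paper's: use Lemmas~\ref{lem:charactInitial} and~\ref{lem:charactPath} to reconstruct the forward paths deterministically from $F$, relate them to the partial reverse algorithm, and show that Property~(III) is equivalent to Condition~(C). However, there is a genuine gap in your case analysis.

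You overlook Case~(IV)(3) with $\ell_{k_i+1}^i\neq\ell_1^i$: a single forward step that creates a unicycle with a \emph{unique branch}, where the branch leaf $\ell_1^i$ is smaller than every vertex of the cycle $C_\alpha$ (and $\eps_\alpha=0$). In your $(\Rightarrow)$ argument this step lies in $I$ (it is not a cycle-only creation), yet $\gamma_{\ell_1^{i_s}}$ equals the branch \emph{followed by the cycle}, whereas the partial reverse path $\lambda_{\bar\ell_1^s}$ stops when it reaches the cycle; your claimed equality $\lambda_{\bar\ell_1^s}=\gamma_{\ell_1^{i_s}}$ therefore fails. In your $(\Leftarrow)$ argument you assert that the forward initial vertices are the increasing merge of all reverse leaves with \emph{all} cycle minima $c_1,\dots,c_{k_0}$, and that a forward step starting at a reverse leaf always falls under Cases (IV)(1), (2), (4), or (5). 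But in the configuration just described, the vertex at which the branch meets the cycle is still covered by a doubled edge of $M_0\cup M^{(\eps)}$ (since $\eps_\alpha=0$), so it does not lie in $(V_{\ell_k}^{\D_{\ldots}})^c$ and the forward path starting at the leaf \emph{cannot} terminate there: it must continue around the cycle. Consequently $c_\alpha$ never appears as a forward initial vertex, contradicting your merge claim, and the relevant forward case is (IV)(3), not (IV)(5).

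The paper's proof singles out exactly this situation as a separate ``last case'': when $\ell_1^i$ is the unique leaf of a unicycle whose cycle was created by the algorithm and $\ell_1^i$ is smaller than the cycle's minimal vertex, one has $\gamma_{\ell_1^i}=\lambda_{\bar\ell_1^{i'}}$ followed by the cycle, and one checks directly that Property~(III) for $\gamma_{\ell_1^i}$ is equivalent to Condition~(C) for $\lambda_{\bar\ell_1^{i'}}$ (the branch portion has even length and starts with an $M_0$-edge by (IV)(3), and the cycle portion is automatically even, alternating, and correctly oriented). Your argument can be repaired by inserting this case into both directions.
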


\begin{proof}
Let $F$ be an RCRSF compatible with $M_0$ containing $k_F$ unicycles,
and denote by $\{C_1,\dots,C_{k_F}\}$ its set of cycles. For every $
(\eps_1,\dots,\eps_{k_F})\in\{0,1\}^{k_F}$, the
edge configuration
$M^{{(\eps_1,\dots,\eps_{k_F})}}$ is well defined, and by Lemma
\ref{lem:epsilonDimer} is a perfect matching. 
We now fix $(\eps_1,\dots,\eps_{k_F})\in\{0,1\}^{k_F}$. Recall that if
$k_F=0$, then the perfect matching $M^{{(\eps_1,\dots,\eps_{k_F})}}$ is simply
$M_0$.

In the case where $k_F\neq 0$, $(\eps_1,\dots,\eps_{k_F})=(1,\dots,1)$, and
$F$ consists of cycles
only, the superimposition $M_0\cup M^{(\eps_1,\dots,\eps_{k_F})}$
consists of cycles only, and $F=M_0\cup M^{(\eps_1,\dots,\eps_{k_F})}$ is an
output of the complete algorithm. The orientation of the cycles is specified by the
choice of orientation of Section \ref{sec:sec21}, thus proving Proposition
\ref{thm:thm2}.

Assume that we are not in the above case. Then $F$ is an output of the
algorithm with initial superimposition $M_0\cup
M^{(\eps_1,\dots,\eps_{k_F})}$ if and only if there exists a positive integer
$j$
and a sequence of paths
$(\gamma_{{\ell}_1^1},\dots,\gamma_{{\ell}_1^j})\in\Gamma_j$
such that $F=M_0\cup
M_{\gamma_{{\ell}_1^1},\dots,\gamma_{{\ell}_1^j}}^{(\eps_1,\dots,\eps_{k_F})}
$.

Lemmas \ref{lem:charactInitial} and \ref{lem:charactPath} give a
characterization of ${\ell}_1^i$ and $\gamma_{{\ell}_1^i}$ at every step of the
algorithm. This allows us to define a complete reverse algorithm.

\underline{\textbf{Complete reverse algorithm}}

\underline{Input}: an RCRSF $F$ compatible with $M_0$, 
$(\eps_1,\dots,\eps_{k_F})\in\{0,1\}^{k_F}$ as above, and the corresponding
perfect matching $M^{(\eps_1,\dots,\eps_{k_F})}$. If $k_F=0$, then
$M^{(\eps_1,\dots,\eps_{k_F})}=M_0$. 

\underline{Initialization}: $F_1=F$.

\underline{Step $i$, $i\geq 1$}.

Since $F_i$ is either $F_1$ or is obtained from $F_{i-1}$ by removing edges,
the set of cycles of $F_i$ is included in the set of cycles
$\{C_1,\dots,C_{k_F}\}$ of $F_1$.

For every connected component of $F_i$ which is a cycle $C_{\alpha}$ such that
$\eps_\alpha=0$ (meaning that $C_\alpha$ is not a cycle of the initial
superimposition $M_0\cup M^{(\eps_1,\dots,\eps_{k_F})}$), let $m_{C_\alpha}$ 
be the smallest vertex of $C_\alpha$. Define
$$
x_i=
\begin{cases}
\max\{m_{C_\alpha}:\,C_\alpha\text{ is a cycle-connected component of }F_i
\text{, and }\eps_\alpha=0\}&\text{ if $\{\}\neq\emptyset$}\\
-\infty&\text{ else}.
\end{cases}
$$
If $F_i$ has at least one leaf, let $y_i$ be the maximum leaf, else let
$y_i=-\infty$.
Note that by assumption, we do not have $x_i=y_i=-\infty$. We 
let ${\ell}_1^i=\max\{x_i,y_i\}$, and $\gamma_{{\ell}_1^i}$ be the oriented path
as
characterized in Lemma \ref{lem:charactPath}.

Let $F_{i+1}=F_i\setminus\gamma_{{\ell}_1^i}$. If the oriented edge
configuration $F_{i+1}$ is empty, or if it consists of cycles of the
superimposition 
$M_0\cup M^{(\eps_1,\dots,\eps_{k_F})}$ only, then stop; else go to Step $i+1$.

\underline{End}: since edges are removed at every step and since $F$ contains
finitely
many edges, the algorithm ends in finite time $j$, for some integer $j$.

This defines for every RCRSF $F$ compatible with $M_0$, a sequence of paths
$\gamma_{{\ell}_1^1},\dots,\gamma_{{\ell}_1^j}$ such that $F$ is the union of
these paths and of cycles of the initial superimposition $M_0\cup
M^{(\eps_1,\dots,\eps_{k_F})}$. As a consequence, the oriented edge
configuration $F$ satisfies Properties (I), (II), (IV)(1)$-$(5). We are thus
left with
proving that $F$ satisfies Property (III) if and only if it satisfies
Condition (C), \emph{i.e.} we need to show that the paths
$(\gamma_{{\ell}_1^1},\dots,\gamma_{{\ell}_1^j})$ all have even length, are
alternating and start from an edge of $M_0$ if and only if $F$ satisfies
Condition (C).

Observe that initial vertices $({\ell}_1^1,\dots,{\ell}_1^j)$ of the complete
reverse
algorithm consist of initial vertices
$( {\bar{\ell}}_1^1,\dots,{\bar{\ell}}_1^{N})$ of the partial reverse
algorithm, interlaced with smallest vertices of
components which are cycles. Indeed, the only difference in the partial
reverse algorithm is that cycles are not removed, but this does not change the
characterization of largest leaf.

If ${\ell}_1^i$ is the smallest vertex of a component of $F_i$ which is a cycle,
that is, if ${\ell}_1^i=x_i$, then
$\gamma_{{\ell}_1^i}$ is a cycle $C_\alpha\in\{C_1,\dots,C_{k_F}\}$ such that
$\eps_\alpha=0$.
Since $F$ is compatible with $M_0$, the cycle has even length and is
alternating. The orientation is fixed by the algorithm and $\gamma_{{\ell}_1^i}$
always satisfies Property (III).

If ${\ell}_1^i$ is the largest leaf of $F_i$, that is, if ${\ell}_1^i=y_i$,
then in all cases except one, which we treat below, the path
$\gamma_{{\ell}_1^i}$
is exactly the path $\lambda_{\bar{\ell}_1^{i'}}$ of the partial reverse
algorithm, for some $i'\leq i$.
Condition (C)
says that $\lambda_{\bar{\ell}_1^{i'}}$ has even length and starts from an
edge
of $M_0$.
In order to show that this is equivalent to satisfying Property (III), we are
left with showing that, by construction, the path
$\lambda_{\bar{\ell}_1^{i'}}$ is
always
alternating. Suppose that this is not the case, then there are at least two
edges of the same kind (either in $M_0$ or not in $M_0$) which follow each other.
This implies that there is a vertex $v$ of the path incident to two
edges of the same kind. Since $M_0$ is a perfect matching, every vertex is
incident to exactly one edge of $M_0$, so that we cannot have two edges of
$M_0$ following each other. Thus these two edges do not belong to $M_0$. Again,
since $M_0$ is a perfect matching, the vertex $v$ is also incident to an
edge of $M_0$, implying that $v$ is the end of a branch. By construction of the
path $\lambda_{\bar{\ell}_1^{i'}}$, the path must stop at $v$, implying that
one of
the
two edges is not in $\lambda_{\bar{\ell}_1^{i'}}$, yielding a contradiction.

We now treat the last case. If ${\ell}_1^i$ is a leaf of a connected component
of
$F_i$ which is a unicycle rooted on a cycle $C_\alpha$ such that
$\eps_\alpha=0$,
with a unique branch, and such that ${\ell}_1^i$ is smaller than the smallest
vertex of the cycle. Then the path $\gamma_{{\ell}_1^i}$ is the path
$\lambda_{\bar{\ell}_1^{i'}}$ followed by the cycle with the appropriate
orientation. We
have to show that $\gamma_{{\ell}_1^i}$ satisfies Property (III) if and only if
$\lambda_{\bar{\ell}_1^{i'}}$ satisfies Condition (C). 
By Property (III), we know that the part of $\gamma_{{\ell}_1^i}$ stopping when
the
cycle is reached, which is precisely $\lambda_{\bar{\ell}_1^{i'}}$, has even
length
and
starts from an edge of $M_0$. This is exactly Condition (C), since by the
same argument as above, the path $\lambda_{\bar{\ell}_1^{i'}}$ is
alternating. We
conclude
by observing that since $F$ is compatible with $M_0$, the cycle part of
$\gamma_{{\ell}_1^i}$ is alternating, and
starts from an edge of $M_0$ by construction of the orientation of
the cycle. Thus $\gamma_{{\ell}_1^i}$ satisfies Property (III) if and only if
$F$ is compatible with $M_0$ and satisfies Condition (C).
\end{proof}

We denote by $\GG(M_0)$ the set of RCRSFs compatible with $M_0$ satisfying
Condition (C). Let $F$ be an RCRSF of $\GG(M_0)$, and let $k_F$ be its number of
unicycles. If $k_F\neq 0$, then for every
$(\eps_1,\dots,\eps_{k_F})\in\{0,1\}^{k_F}$, denote by
$F^{(\eps_1,\dots,\eps_{k_F})}$ the version of $F$ obtained from the complete
algorithm with initial superimposition $M_0 \cup
M^{(\eps_1,\dots,\eps_{k_F})}$, with the orientation of cycles given by
Proposition \ref{thm:thm2}. If $k_F=0$, then $F$ is obtained exactly once from
the complete algorithm with initial superimposition $M_0 \cup M_0$. 

Since $M_0 \cup M^{(\eps_1,\dots,\eps_{k_F})}$ has exactly
$\sum_{i=1}^{k_F}\eps_i$ cycles, and since $M_0\cup M_0$ has none, we have as
a consequence of the complete algorithm, see Equation \eqref{equ:weight1}, that the
weight $w_{M_0}(F^{(\eps_1,\dots,\eps_{k_F})}\setminus M_0)$ is equal to:
\begin{equation}\label{equ:final1}
\sgn(\sigma_{M_0(F^{(\eps_1,\dots,\eps_{k_F})}\setminus M_0)})\cdot
\begin{cases}
(-1)^{\sum_{i=1}^{k_F}\eps_i}
\prod\limits_{e\in F^{(\eps_1,\dots,\eps_{k_F})}\setminus M_0}a_e& \text{ if
$k_F\neq 0$}\\
\prod\limits_{e\in F^{(\eps_1,\dots,\eps_{k_F})}\setminus M_0}a_e& \text{ if
$k_F=0$}.
\end{cases}
\end{equation}

Recall that $\T$ denotes the output of the complete algorithm with initial
superimposition $M_0\cup M$ for a fixed reference perfect matching $M_0$ and a
generic perfect matching $M$ of $G$. Since we now aim at taking the union over
all
perfects matchings $M$, we write $\T$ as
$\T_{M_0}(M)$.

As a consequence of Proposition \ref{thm:thm2}, we have that $\bigcup_{M\in
\M}\T_{M_0}(M)$ is equal to:
\begin{equation}\label{equ:final}
\Bigl(
\bigcup\limits_{\{F\in \GG(M_0):\,k_F\neq 0\}}
\bigcup\limits_{(\eps_1,\dots,\eps_k)\in\{0,1\}^{k_F}}
F^{(\eps_1,\dots,\eps_k)}\setminus M_0
\Bigr)
\bigcup
\Bigl(
\bigcup\limits_{\{F\in \GG(M_0):\,k_F=0\}}
F\setminus M_0
\Bigr).
\end{equation}
Returning to the definition of
the Pfaffian of Equation \eqref{equ:pfaffian}, using Corollary \ref{cor:2} and
Equation \eqref{equ:final} in the last line, we deduce that:
\begin{align*}
\Pf(A)&=\sum_{M\in\M}w_{M_0}(M), \text{
(Definition of Equation \eqref{equ:pfaffian})}\\
&=\sum_{M\in \M}w(\T_{M_0}(M)),\text{ (by Corollary \ref{cor:2})}\\
&=\underbrace{\sum_{\{F\in \GG(M_0):\,k_F\neq 0\}}
\sum_{(\eps_1,\dots,\eps_{k_F})\in\{0,1\}^{k_F}}
w_{M_0}(F^{(\eps_1,\dots,\eps_{k_F})}\setminus M_0)}_{(I)}+
\underbrace{\sum_{\{F\in \GG(M_0):\,k_F= 0\}}
w_{M_0}(F\setminus M_0)}_{(II)}.
\end{align*}
Let us show that $(I)$ is equal to zero. As a consequence of Equation
\eqref{equ:final1}, it is equal to:
\begin{align*}
(I)&=\sum\limits_{\{F\in \GG(M_0):\,k_F\neq 0\}} 
\sum_{(\eps_1,\dots,\eps_{k_F})\in\{0,1\}^{k_F}}
\sgn(\sigma_{M_0(F^{(\eps_1,\dots,\eps_{k_F})}\setminus
M_0)})(-1)^{\sum_{i=1}^{k_F}
\eps_i}
\prod_{e\in
F^{(\eps_1,\dots,\eps_{k_F})}\setminus M_0} a_{e}.
\end{align*}
Observing that the term
$\sgn(\sigma_{M_0(F^{(\eps_1,\dots,\eps_{k_F})}\setminus
M_0)})\prod_{e\in
F^{(\eps_1,\dots,\eps_{k_F})}\setminus M_0} a_{e}$
is independent of $(\eps_1,\dots,\eps_{k_F})$, we conclude that:
\begin{align*}
(I)&= \sum\limits_{\{F\in \GG(M_0):\,k_F\neq 0\}}
\Bigl(\sgn(\sigma_{M_0(F^{(\eps_1,\dots,\eps_{k_F})}\setminus M_0)})
\prod_{e\in F^{(\eps_1,\dots,\eps_{k_F})}\setminus M_0} a_{e}
\Bigr)
\sum_{(\eps_1,\dots,\eps_{k_F})\in\{0,1\}^{k_F}}
(-1)^{\sum_{i=1}^{k_F}}\\
&=\sum\limits_{\{F\in \GG(M_0):\,k_F\neq 0\}}
\Bigl(\sgn(\sigma_{M_0(F^{(\eps_1,\dots,\eps_{k_F})}\setminus M_0)})
\prod_{e\in F^{(\eps_1,\dots,\eps_{k_F})}\setminus M_0} a_{e}
\Bigr)(1-1)^{k_F}\\
&=0.
\end{align*}
Thus,
\begin{align*}
\Pf(A)&=\sum_{\{F\in \GG(M_0):\,k_F= 0\}}
w_{M_0}(F\setminus M_0) \\
&=\sum_{\{F\in \GG(M_0):\,k_F= 0\}}
\sgn(\sigma_{M_0(F\setminus M_0)})
\prod_{e\in F\setminus M_0}a_e.
\end{align*}

The set $\{F\in\GG(M_0):\,k_F=0\}$ consists of RCRSFs compatible with $M_0$
containing no unicycles, and satisfying Condition (C). Observing that:
\begin{itemize}
 \item the set of RCRSFs compatible with $M_0$, containing no unicycle is
exactly the set of spanning forests of $G^R$ compatible with $M_0$ of Section
\ref{sec:11} of the introduction,
\item in the case of spanning forests, the partial reverse algorithm is exactly
the trimming algorithm of Section \ref{sec:11},
\item Condition (C) of Definition \ref{def:def0} and Condition
(C) of Definition \ref{def:N2} are the same in the case of spanning forests,
\item the permutation $\sigma_{M_0(F\setminus M_0)}$ obtained from the
algorithm is the permutation of Definition \ref{def:def2},
\end{itemize}
we deduce that $\{F\in\GG(M_0):\,k_F=0\}=\F(M_0)$, and thus conclude the
proof of Theorem \ref{thm:main}.

\textsc{Example}. Let us take the reference matching $M_0$ which followed us
throughout the paper, and consider the three possible perfect matchings
$M_1,M_2,M_3$ of the graph $G$ given in Figure \ref{fig:fig8}. Figure
\ref{fig:fig5} shows the output of the complete algorithm with initial
superimposition $M_0\cup M_1$. Since the superimpositions $M_0\cup M_2$ and
$M_0\cup M_3$ contain doubled edges only, the output of the algorithm with
these
respective initial superimpositions, are the configurations themselves. By the
Theorem \ref{thm:main}, configurations $M_2$ and $M_{1,2,1}$ have opposite
weights, and configurations $M_3$ and $M_{1,3,1}$ as well, so that their
contributions cancel in the Pfaffian. As a consequence,
signed weighted half-spanning trees counted by the Pfaffian of the matrix $A$
are those of Figure \ref{fig:fig10} below.

\begin{figure}[ht]
\begin{center}
\includegraphics[width=8cm]{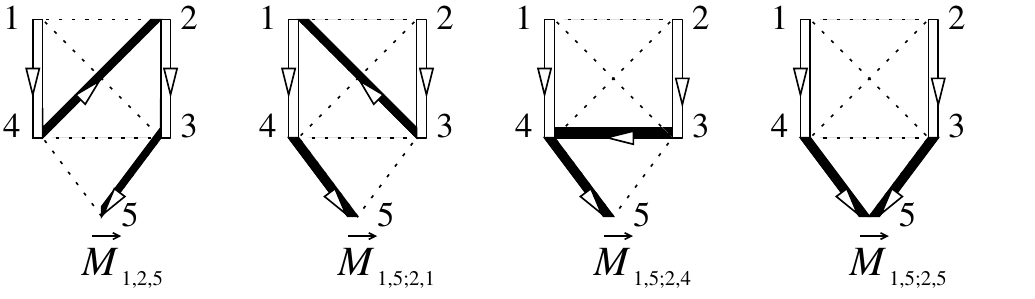} 
\caption{Black edges of the above configurations are
half-spanning trees counted by the Pfaffian of the matrix
$A$.}\label{fig:fig10}
\end{center}
\end{figure}

\subsection{Proof of Corollary \ref{cor:main}}\label{sec:sec32}

Let us recall the setting: $A^R$ is a skew-symmetric matrix of size
$(n+r)\times (n+r)$, whose column sum is zero, with $n$ even; $A$ is the matrix
obtained from $A^R$ by removing the $r$ last lines and columns; $G^R$ and $G$
are the graphs naturally constructed from $A^R$ and $A$ in the introduction.  

Recall, see Section \ref{sec:sec21}, that the sign of the permutation
$\sigma_{M}$ assigned to a perfect matching $M$ counted by the Pfaffian
of $A$, depends on the ordering of the two elements of pairs involved in the
perfect matching, but not on the ordering of the pairs themselves. Choosing the
sign of $\sigma_M$ thus amounts to choosing an orientation of edges of the
perfect matching $M$. The Pfaffian of $A$ can thus be written as:
\begin{equation*}
\Pf(A)=\sum_{M\in\M}\sgn(\sigma_{M})\prod_{e\in M}a_e, 
\end{equation*}
where the product is over coefficients corresponding to a choice of orientation
of edges of $M$, specifying a choice of permutation $\sigma_M$. 

Now, it is a known fact that the determinant of a skew-symmetric matrix is
equal to the square of the Pfaffian:
\begin{align*}
\det(A)&=\Bigl(\sum_{M_0\in \M}\sgn(\sigma_{M_0})\prod_{e\in M_0} a_e\Bigr)
\Bigl(\sum_{M\in \M}\sgn(\sigma_{M})\prod_{e\in M} a_e\Bigr).
\end{align*}
As in Section \ref{sec:sec21}, for every $M_0\in\M$, we choose the permutation
$\sigma_M$ using the superimposition $M_0\cup M$. Equation \eqref{equ:pfaffian}
thus yields:
\begin{equation*}
\det(A)=\sum_{M_0\in \M}\sgn(\sigma_{M_0})\prod_{e\in M_0} a_e
\Bigl(\sum_{M\in \M}\sgn(\sigma_{M_0(M)})(-1)^{|\D(M_0\cup M)|}(-1)^{|\C(M_0\cup
M)|}\prod_{e\in M}a_e \Bigr).
\end{equation*}
As a consequence of Theorem \ref{thm:main}, this can be rewritten as:
\begin{align*}
\det(A)&=\sum_{M_0\in \M}\sgn(\sigma_{M_0})\prod_{e\in M_0} a_e
\Bigl(\sum_{F\in \F(M_0)}\sgn(\sigma_{M_0(F\setminus M_0)})
\prod_{e\in F\setminus M_0}a_e \Bigr),\\
&=\sum_{M_0\in \M}\sum_{F\in \F(M_0)}\Bigl(\sgn(\sigma_{M_0})\prod_{e\in M_0}
a_e\Bigr)\sgn(\sigma_{M_0(F\setminus M_0)})
\prod_{e\in F\setminus M_0}a_e.
\end{align*}
where $\sigma_{M_0(F\setminus M_0)}$ is defined in Definition \ref{def:def2}.

We have not yet chosen the permutation $\sigma_{M_0}$ assigned to $M_0$, we do
so now. For every $F\in \F(M_0)$, we chose the orientation of $M_0$ to be the
orientation of edges induced by the spanning forest $F$: this is precisely
$\sigma_{M_0(F\setminus M_0)}$. Combining the product of coefficients $a_e$ over
oriented edges in $M_0$ and in $F\setminus M_0$ yields:
\begin{align*}
\det(A)&=\sum_{M_0\in \M}\sum_{F\in \F(M_0)}
\sgn(\sigma_{M_0(F\setminus M_0)})^2
\prod_{e\in F} a_e.\\
&=\sum_{M_0\in \M}\sum_{F\in \F(M_0)}
\prod_{e\in F} a_e,
\end{align*}
thus proving Corollary \ref{cor:main}. 

\begin{rem}\label{rem:main}$\,$
\begin{enumerate}
\item We now give an intrinsic characterization of $\cup_{M_0\in\M}\F(M_0)$,
not using
reference perfect matchings.

Consider the trimming algorithm of Section
\ref{sec:11} applied to general spanning
forests of $G^R$ (not assuming that they are compatible with a reference
perfect matching $M_0$). Since the reference perfect matching is not used in the
algorithm, everything works out in the same way, and the algorithm yields
a sequence of paths $\lambda_{{\ell}_1}^1,\dots,\lambda_{{\ell}_1}^N$.  
This yields the following more general form of Definition \ref{def:def0}.

\begin{defi}
A spanning forest $F$ of $G^R$ is said to satisfy \emph{Condition} (C) if each
of the paths $\lambda_1^1,\dots,\lambda_1^N$ obtained from the trimming
algorithm has even length. Let $\F$ denote the set of spanning forests of $G^R$
satisfying Condition (C).
\end{defi}

\begin{lem}
\begin{equation*}
\F=\bigcup_{M_0\in\M}\F(M_0).
\end{equation*}
\end{lem}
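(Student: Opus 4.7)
The plan is to prove the two inclusions separately. The forward direction $\bigcup_{M_0 \in \M} \F(M_0) \subseteq \F$ is immediate from the definitions: if $F \in \F(M_0)$ for some perfect matching $M_0$, then by Definition~\ref{def:def0} each trimming path has even length, which is precisely the requirement for membership in $\F$.

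For the reverse inclusion, given $F \in \F$, I would explicitly construct a perfect matching $M_0$ of $G$ such that $F \in \F(M_0)$. Applying the trimming algorithm to $F$ produces a sequence of paths, which I write as $\lambda_{{\ell}_1^i} = (v_0^i, v_1^i, \ldots, v_{2k_i}^i)$ with $v_0^i = {\ell}_1^i$, of even lengths $2k_i$. Define
\begin{equation*}
M_0 = \bigcup_{i=1}^N \bigl\{(v_0^i, v_1^i),\, (v_2^i, v_3^i),\, \ldots,\, (v_{2k_i - 2}^i, v_{2k_i - 1}^i)\bigr\},
\end{equation*}
i.e., the odd-indexed edges of each path. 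By construction, $M_0 \subseteq F$ and each path starts with an $M_0$-edge, so as soon as $M_0$ is verified to be a perfect matching of $G$, it follows that $F$ is compatible with $M_0$ (it splits into $|V|/2$ edges of $M_0$ and $|V|/2$ edges of $E^R \setminus M_0$) and satisfies Condition~(C) of Definition~\ref{def:def0}; hence $F \in \F(M_0)$.

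The heart of the argument is therefore to show that every $v \in V$ is incident to exactly one edge of $M_0$. The key observation is that each edge of $F$ belongs to a unique trimming path, so the outgoing edge $e_v^{\mathrm{out}}$ of $v$ pins down a unique path $i_v$ containing $v$. Since $v$ has a unique outgoing edge in $F$, it cannot be the stop vertex of $i_v$ (a stop vertex has no outgoing edge in its path). Hence $v$ is either the starting leaf of $i_v$, so $e_v^{\mathrm{out}}$ sits at position~$1$ (odd) and lies in $M_0$; or an interior vertex, so its two incident edges in $i_v$ sit at consecutive positions $s-1$ and $s$, exactly one of which is odd. Any remaining incoming edge $(u,v)$ lies in some other path $j \neq i_v$ and must occupy the last position $2k_j$: if it sat at position $t < 2k_j$, the next edge of $j$ would be an outgoing edge at $v$ distinct from $e_v^{\mathrm{out}}$, contradicting uniqueness. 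Since $2k_j$ is even, such edges are never in $M_0$. Combining these cases, $v$ is incident to exactly one $M_0$-edge.

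The main obstacle is this positional bookkeeping — specifically, ruling out that $v$ sits at the stop end of $i_v$ and forcing every foreign incoming edge to the final position of its own path. Once this is established, the perfect matching property, compatibility, and Condition~(C) follow at once; the argument uses nothing beyond the structure of the spanning forest and the trimming algorithm, and no property of the matrix $A^R$ is invoked.
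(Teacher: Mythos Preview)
Your proposal is correct and follows the same construction as the paper: take the odd-position edges along each trimming path to build $M_0$, then verify it is a perfect matching. The paper's verification is terser (counting $|V|/2$ edges and invoking even path length to rule out double incidence), whereas you carry out the positional bookkeeping explicitly; the paper also records the extra fact that the $\F(M_0)$ are pairwise disjoint, which you omit but which is not needed for the stated equality.
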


\begin{proof}
By definition, we have the following immediate inclusion:
$\bigcup_{M_0\in\M}\F(M_0)~\subset~\F$. 

If $M_0$ and $M_0'$ are two distinct perfect matchings of $G$, then $\F(M_0)\cap
\F(M_0')=\emptyset$. Indeed suppose there exists a spanning forest $F$ in the
intersection. Then, it must be compatible with $M_0$ and $M_0'$, meaning that
it contains all edges of $M_0\cup M_0'$. Since $M_0$ and $M_0'$ are distinct,
the superimposition $M_0\cup M_0'$ must contain a cycle, yielding a
contradiction with the fact that $F$ is a spanning forest.

Thus it remains to show that given a spanning forest $F$ satisfying Condition
(C) there exists a perfect matching $M_0$ such that $F\in\F(M_0)$, meaning that
$F$ is compatible with $M_0$ and satisfies Condition (C) of Definition
\ref{def:def0}. Let $F$ be a spanning forest satisfying Condition (C), and let
$\lambda_{{\ell}_1^1},\dots,\lambda_{{\ell}_1^N}$ be the sequence of paths
obtained from
the trimming algorithm. For every $i\in\{1,\dots,N\}$, let
$M_0(\lambda_{{\ell}_1^i})$ consist of half of the edges of
$\lambda_{{\ell}_1^i}$ such
that $\lambda_{{\ell}_1^i}$ alternates between edges of
$M_0(\lambda_{{\ell}_1^i})$ and edges of $\lambda_{{\ell}_1^i}\setminus
M_0(\lambda_{{\ell}_1^i})$, starting from an edge of
$M_0(\lambda_{{\ell}_1^i})$. Let
$M_0=\cup_{i=1}^N M_0(\lambda_{{\ell}_1^i})$. Then $F$ is compatible with $M_0$
and
satisfies Condition (C) of Definition \ref{def:def0}. It remains to show that
$M_0$ is a perfect matching. The edge configuration consists of $|V|/2$ edges,
since by construction, it consists of half of the edges of a spanning forest.
Moreover, since each of the paths
$\lambda_{{\ell}_1^1},\dots,\lambda_{{\ell}_1^N}$
has even length, no vertex is incident to two edges of $M_0$, thus proving that
$M_0$ is a perfect matching. 
\end{proof}

As a consequence, Corollary \ref{cor:main} can be rewritten in the simpler
form:
\begin{cor}\label{cor:main2}
\begin{equation*}
\det(A)=\sum_{F\in\F}\prod_{e\in F}a_e. 
\end{equation*}
\end{cor}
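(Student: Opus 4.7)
The plan is to derive Corollary \ref{cor:main2} by combining Corollary \ref{cor:main} with the lemma just proved, which identifies $\F$ with the union $\bigcup_{M_0\in\M}\F(M_0)$. The key observation is that this union is in fact disjoint, so the double sum appearing in Corollary \ref{cor:main} collapses to a single sum over $\F$.

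First I would recall Corollary \ref{cor:main}, which expresses $\det(A)$ as
$$\det(A)=\sum_{M_0\in\M}\sum_{F\in\F(M_0)}\prod_{e\in F}a_e.$$
The summand depends only on $F$, not on $M_0$, so if the sets $\F(M_0)$ are pairwise disjoint, one can rewrite the double sum as $\sum_{F\in\bigsqcup_{M_0}\F(M_0)}\prod_{e\in F}a_e$, and then apply the preceding lemma to identify this index set with $\F$.

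The disjointness is precisely the content of the first part of the lemma's proof: if $F\in\F(M_0)\cap\F(M_0')$ with $M_0\ne M_0'$, then $F$ contains all edges of both $M_0$ and $M_0'$, hence the superimposition $M_0\cup M_0'$ sits inside $F$; but $M_0\cup M_0'$ necessarily contains an alternating cycle when $M_0\ne M_0'$, contradicting the fact that $F$ is a forest. This ensures that each $F\in\F$ appears in exactly one $\F(M_0)$, namely the perfect matching $M_0$ reconstructed in the lemma as the union of the odd-indexed edges of the paths $\lambda_{\ell_1^1},\dots,\lambda_{\ell_1^N}$ produced by the trimming algorithm.

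No substantive obstacle remains: the argument is a direct bookkeeping step once the disjoint decomposition $\F=\bigsqcup_{M_0\in\M}\F(M_0)$ is in hand. The only mild subtlety is to make sure that in Corollary \ref{cor:main} the weight $\prod_{e\in F}a_e$ is intrinsic to $F$ (not to the pair $(M_0,F)$), which is clear because the product ranges over all oriented edges of the spanning forest and involves no matching-dependent sign.
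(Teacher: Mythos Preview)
Your proposal is correct and follows exactly the paper's approach: the paper simply states that Corollary~\ref{cor:main2} is a rewriting of Corollary~\ref{cor:main} using the lemma $\F=\bigcup_{M_0\in\M}\F(M_0)$, and your argument spells out the disjointness and bookkeeping that make this rewriting valid.
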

\item Let $\Xi$ be the set of cycle coverings of the graph $G$ by cycles of
even length: a typical element $\xi\in\,\Xi$ is of the form
$\xi=(C_1,\dots,C_k)$ for some $k$. Then, since the matrix $A$ is
skew-symmetric, the determinant of $A$ is equal to:
\begin{align*}
\det(A)&=\sum\limits_{\xi=(C_1,\dots,C_k)\in\Xi}\;
\prod_{\{i:|C_i|\geq 4\}}(-1)
\Bigl(\prod_{e\in \overrightarrow{C_i}}a_{e}+
\prod_{e\in \overleftarrow{C_i}}a_e\Bigr)
\prod_{\{i:|C_i|=2\}}(-1)a_e a_{-e}\\
&=\sum\limits_{\xi=(C_1,\dots,C_k)\in\Xi}\;
\prod_{\{i:|C_i|\geq 4\}}(-2)
\bigl(\prod_{e\in \overrightarrow{C_i}}a_{e}\bigr)
\prod_{\{i:|C_i|=2\}}a_e^2.
\end{align*}

It is also possible to prove Corollary \ref{cor:main2} directly, without
passing through the Pfaffian, by applying the complete algorithm to doubled edges
of configurations counted by the determinant, and by taking into account all
edges instead of half of them.
\end{enumerate}
\end{rem}

\subsection{Line-bundle matrix-tree theorem for skew-symmetric
matrices}\label{sec:sec33}

In the whole of this section, we change notations slightly, and we let $A$ be a
skew-symmetric matrix of size $n\times n$, whose column sum is zero, with $n$
even; $G=(V,E)$ denotes the graph associated to the matrix $A$.

We now state a line-bundle version of the matrix-tree theorem for
skew-symmetric matrices of Corollary \ref{cor:main}, in the spirit of what is
done for the Laplacian matrix in \cite{Forman}, \cite{KenyonVectorBundle}, but
first we need a few definitions.

A $\CC$-bundle is a copy $\CC_v$ of $\CC$ associated to each vertex $v\in V$.
The \emph{total space} of the bundle is the direct sum $W=\oplus_{v\in
V}\CC_v$. A \emph{connection} $\Psi$ on $W$ is the choice, for each oriented
edge $(i,j)$ of $G$ of linear isomorphism $\psi_{i,j}:\CC_i\rightarrow\CC_j$,
with the property that $\psi_{i,j}=\psi_{j,i}^{-1}$; that is, we associate to
each oriented edge $(i,j)$ a non-zero complex number $\psi_{i,j}$ such that
$\psi_{i,j}=\psi_{j,i}^{-1}$. We
say that $\psi_{i,j}$ is the \emph{parallel transport} of the connection over
the edge $(i,j)$. The \emph{monodromy} of the connection around an oriented
cycle $\vec{C}$
is the complex number $\omega_{\vec{C}}=\prod_{e\in \vec{C}} \psi_{e}$. 

We consider the matrix $A^{\psi}$ constructed from the
matrix $A$ and the connection $\psi$:
\begin{equation*}
(A^{\psi})_{i,j}=a_{i,j}^{\psi}=a_{i,j}\psi_{i,j}.
\end{equation*}

A \emph{cycle-rooted spanning forest} of $G$, also denoted $CRSF$, is an
oriented edge configuration spanning vertices of $G$ such that each connected
component is a tree rooted on a cycle. In all that follows, we assume that
cycles have length $\geq 3$. Edges of branches of the trees are oriented towards
the cycle, and the cycle is oriented in one of the two possible directions. 

Consider the partial reverse algorithm of Section \ref{sec:sec31} applied to
a general CRSF $F$. Since the reference perfect matching plays no role in this
algorithm, everything
works out in the same way, and the algorithm yields a sequence of paths
$\lambda_{{\ell}_1^1},\dots,\lambda_{{\ell}_1^N}$, whose union corresponds to
branches
of $F$. 

\begin{defi}
A CRSF of $G$ is said to satisfy \emph{Condition} (C) if all of the
paths $\lambda_{{\ell}_1}^1,\dots,\lambda_{{\ell}_1^N}$ obtained from the
partial
reverse algorithm have even length. Let us
denote by $\GG$ the set of CRSFs satisfying Condition (C). Then,
\end{defi}

Then, for a generic CRSF $F$ of $G$, let us denote by $(C_1,\dots,C_k)$ its cycles.

\begin{cor}\label{thm:linebundle}
\begin{align*}
\det(A^{\psi})=\sum_{F\in \GG}
\Bigl(\prod_{\{e\in \text{\rm branch}(F)\}}a_e\Bigr)
\Bigl(\prod_{\{i:|C_i|\text{\rm is odd}\}}
&\prod_{e\in
\overrightarrow{C_i}}a_e[\omega_{\overrightarrow{C_i}}-\omega_{\overrightarrow{
C_i }}^{-1} ]
\Bigr)\cdot\\
&\cdot\Bigl(\prod_{\{i:|C_i|\text{\rm is even}\}}
\prod_{e\in
\overrightarrow{C_i}}a_e[2-\omega_{\overrightarrow{C_i}}-\omega_{\overrightarrow
{C_i}}^{-1}]\Bigr).
\end{align*}
\end{cor}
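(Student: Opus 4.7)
The strategy is to proceed in direct analogy with Section~\ref{sec:sec32}, but to replace the ``$\det(A)=\Pf(A)^2$'' decomposition by the direct Leibniz expansion of $\det(A^\psi)$ and to carefully track the monodromies introduced by the connection. The first step is to expand
\[
\det(A^\psi)=\sum_{\sigma\in\Sn}\sgn(\sigma)\prod_{v\in V}a_{v,\sigma(v)}\psi_{v,\sigma(v)}
\]
and group terms according to the unoriented cycle decomposition $\Xi=(C_1,\dots,C_k)$ of $\sigma$; all cycles have length $\geq 2$ since the diagonal of $A$ vanishes. Using skew-symmetry and the cocycle relation $\psi_{ij}\psi_{ji}=1$, each $2$-cycle contributes the orientation- and $\psi$-free factor $a_{ij}^{2}$, while summing the two orientations of a cycle of length $\ell\geq 3$ produces $[\omega_{\overrightarrow{C}}-\omega_{\overrightarrow{C}}^{-1}]\prod_{e\in\overrightarrow{C}}a_e$ when $\ell$ is odd and $-[\omega_{\overrightarrow{C}}+\omega_{\overrightarrow{C}}^{-1}]\prod_{e\in\overrightarrow{C}}a_e$ when $\ell$ is even; both are orientation-invariant because for odd $\ell$ the sign flip of $\prod a_e$ balances the inversion $\omega\leftrightarrow\omega^{-1}$.

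The second step is to apply the complete algorithm of Section~\ref{sec:sec25} to each $\Xi$, treating its $2$-cycles as the set $\D$ of doubled edges and its cycles of length $\geq 3$ as the set $\C$ of alternating cycles. The opening-of-doubled-edges procedure of Section~\ref{sec:sec23} uses only the identity $a_{i'i}=-\sum_{j\in V_i}a_{i'j}$ coming from the zero column sum of $A$, and the monodromies $\omega$ on cycles of length $\geq 3$ are inert spectators throughout; the verbatim analogues of Lemmas~\ref{lem:1}--\ref{lem:2} and Corollary~\ref{cor:2} then provide weight preservation. After discarding the odd-loop configurations that cancel by skew-symmetry in Case~(IV)(3), the analogues of Lemma~\ref{lem:N1} and Lemmas~\ref{lem:charactInitial}--\ref{lem:charactPath} identify the outputs with the CRSFs of $G$ satisfying Condition~(C) of Section~\ref{sec:sec33}, that is, with the elements of $\GG$.

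The third step is to group the cycle covers by their image $F\in\GG$ under the algorithm and to match the summed weight to the claim. For a fixed $F$ with cycles $(C_1,\dots,C_{k_F})$, the fiber over $F$ decomposes cycle by cycle. For each odd cycle $C_i$ the only admissible possibility is that $C_i$ appears in $\Xi$ in one of its two orientations, yielding the factor $[\omega_{\overrightarrow{C_i}}-\omega_{\overrightarrow{C_i}}^{-1}]\prod_{e\in\overrightarrow{C_i}}a_e$. For each even cycle $C_i$ there are four admissible possibilities: the two orientations of $C_i$ as a cycle in $\Xi$ (contributing $-[\omega_{\overrightarrow{C_i}}+\omega_{\overrightarrow{C_i}}^{-1}]\prod a_e$ together) and the two alternating dimerizations of $V(C_i)$, each of which is reassembled by Case~(IV)(3) into $C_i$ with the canonical orientation prescribed by the smallest-vertex rule and contributing $+\prod_{e\in\overrightarrow{C_i}}a_e$; combined, these give $[2-\omega_{\overrightarrow{C_i}}-\omega_{\overrightarrow{C_i}}^{-1}]\prod a_e$. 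The remaining vertices of $V$ are exactly the branch vertices of $F$; they are dimerized in $\Xi$ in the unique way dictated by the partial reverse algorithm of Section~\ref{sec:sec31} applied to the branches, and opened linearly by the algorithm to produce $\prod_{e\in\text{branch}(F)}a_e$. Summing over $F\in\GG$ then yields the claimed formula.

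The hard part will be the cycle-by-cycle analysis in Step~3, and specifically the verification that each of the two alternating dimerizations of the vertex set of an even cycle $C_i$ is reassembled by the complete algorithm into $C_i$ itself (and neither into a different cycle of $G$ nor into a purely spanning-forest configuration), and that both dimerizations produce the same canonical orientation of $C_i$ so that their weights add coherently to $+2\prod_{e\in\overrightarrow{C_i}}a_e$. This is the line-bundle analogue of the $(1-1)^{k_F}=0$ cancellation used in the proof of Theorem~\ref{thm:main}: the factor $2-\omega_{\overrightarrow{C_i}}-\omega_{\overrightarrow{C_i}}^{-1}$ replaces the vanishing binomial sum, and its non-vanishing when $\omega\neq 1$ is precisely what allows CRSFs with genuine cycles to survive in the twisted setting, in contrast with Corollary~\ref{cor:main} where only pure spanning forests remained.
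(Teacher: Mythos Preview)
Your proposal is correct and follows essentially the same route as the paper's proof: expand $\det(A^\psi)$ over cycle covers, observe that the connection cancels on $2$-cycles so the opening-of-doubled-edges algorithm applies verbatim with the $A$-column-sum identity, and then read off the per-cycle factors from the fiber analysis. The paper organizes this slightly differently by first freezing the odd cycles (letting them play the role of the root set $R$) and only then running the algorithm on the even part, whereas you treat all cycles of length $\geq 3$ uniformly as the set $\C$; the two are equivalent, and your fiber-by-fiber accounting in Step~3 is in fact a more explicit version of what the paper gestures at with ``looking at the proof of Theorem~\ref{thm:main}, we know precisely what those are.''

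One small imprecision worth flagging: in your ``hard part'' paragraph you assert that both alternating dimerizations of an even cycle $C_i$ are reassembled by Case~(IV)(3) into $C_i$ \emph{with the same canonical orientation}. They are not: the orientation produced by the algorithm is the one compatible with $(\ell_1,\ell_1')$, where $\ell_1$ is the smallest vertex of $C_i$ and $\ell_1'$ is its partner in the chosen dimerization, so swapping the dimerization reverses the cycle. This is harmless, however, because for an even cycle $\prod_{e\in\overrightarrow{C_i}}a_e=\prod_{e\in\overleftarrow{C_i}}a_e$ by skew-symmetry, so both dimerizations still contribute the same weight $+\prod_{e\in\overrightarrow{C_i}}a_e$ and the sum $2-\omega_{\overrightarrow{C_i}}-\omega_{\overrightarrow{C_i}}^{-1}$ comes out as claimed.
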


\begin{proof}

We expand the determinant of $A^{\Psi}$ using cycle decompositions, as we
have done for the determinant of $A$ in Point 2 of Remark \ref{rem:main}.
Since the matrix
$A^{\Psi}$ is not skew-symmetric, we cannot omit odd cycles, and we let
$\Xi$ be the set of cycle decompositions of the graph $G$, that is, the
set of coverings of the graph by disjoint cycles. A typical element of $\Xi$ can
be written as $\xi=\{C_1,\dots,C_k\}$, for some positive integer $k$. Then, the
determinant of the matrix $A^{\psi}$ is:
\begin{equation*}
\det(A^{\psi})=
\sum\limits_{\xi=(C_1,\dots,C_k)\in\Xi}\;
\prod_{\{i:|C_i|\geq 3\}}(-1)^{|C_i|+1}
\Bigl(\prod_{e\in \overrightarrow{C_i}}a_{e}\psi_e +
\prod_{e\in \overleftarrow{C_i}}a_e\psi_e\Bigr)
\prod_{\{i:|C_i|=2\}}\Bigl((-1)a_e\psi_e a_{-e}\psi_{-e}\Bigr).
\end{equation*}

Using the skew-symmetry of the matrix $A$ and the fact that
$\psi_{-e}=\psi_e^{-1}$ this yields:
\begin{align*}
\det(A^{\psi})=
\sum_{\xi=(C_1,\dots,C_k)\in \Xi}\;&
\prod_{\{i:|C_i|\text{ is odd}\}}
\Bigl(\prod_{e\in
\overrightarrow{C_i}}a_e[\omega_{\overrightarrow{C_i}}-\omega_{\overrightarrow{
C_i }}^{-1} ]\Bigr)\cdot\\
&\cdot\prod_{\{i:|C_i|\text{ is even}\geq 4\}}(-1)
\Bigl(\prod_{e\in
\overrightarrow{C_i}}a_e[\omega_{\overrightarrow{C_i}}+\omega_{\overrightarrow{
C_i}}^{-1} ]\Bigr)\cdot
\prod_{\{i:|C_i|=2\}}
\Bigl(\prod_{e\in
\overrightarrow{C_i}}a_e^2\Bigr).
\end{align*}
Note that in a given covering there is always an even number of odd cycles,
since otherwise there is no covering of the remaining graph by even cycles. We
now fix a partial covering of the graph by odd cycles, and
sum over coverings of the remaining graph by even cycles. Since the
contribution of the parallel transport to doubled edges cancels out, and since
the matrix $A$ has columns summing to zero, we then
`open' doubled edges according to the complete algorithm, using Remark
\ref{rem:main}. Everything works out in
the same way, with the role of $R$ played by odd cycles. In this case though,
because of the parallel transport, the contributions of RCRSFs do not cancel, but
looking
at the proof of Theorem \ref{thm:main}, we know precisely what those are.
Summing over all partial coverings by odd cycles yields the result.
\end{proof}

\begin{rem}
Theorem \ref{thm:linebundle} can then be specified in the case of bipartite
graphs, in which case there are no odd cycles, in the case of planar graphs or
of graphs embedded on the torus etc. 
\end{rem}

\appendix
\makeatletter
\def\@seccntformat#1{Appendix~\csname the#1\endcsname:\quad}
\makeatother
\section{Pfaffian matrix-tree theorem for 3-graphs and Pfaffian
half-tree theorem for graphs} \label{App:AppendixA}

In the paper \cite{MasbaumVaintrob}, Masbaum and Vaintrob prove a
Pfaffian matrix-tree theorem for spanning trees of 3-uniform hypergraphs. We
start by giving an idea of their result. 

A \emph{3-uniform hypergraph}, or simply
\emph{3-graph} consists of a set of vertices and a set of \emph{hyper-edges},
hyper-edges being triples of vertices. Consider the complete 3-graph
$K^{(3)}_{n+1}$ on the vertex set $\{1,\dots,n+1\}$, where $n$ is even;
hyper-edges consist of the $\binom{n+1}{3}$ possible triples of points. Suppose
that hyper-edges are assigned anti-symmetric weights $y=(y_{ijk})$, that is,
$y_{ijk}=-y_{jik}=y_{jki}$, and $y_{iij}=0$. Note that considering other
3-graphs amounts to setting some of the hyper-edge weights to zero.

A
\emph{spanning tree}
of $K^{(3)}_{n+1}$ is a sub-3-graph spanning all vertices and containing no
cycle; let us denote by $\T^{(3)}$ the set of spanning trees of $K^{(3)}_{n+1}$.
To apprehend spanning trees of 3-graphs, it is helpful to use their bipartite
representation: a hyper-edge is pictured as a Y, where the end points are black
and correspond to vertices of the hyper-edge, and the degree three vertex is
white. Then a sub-3-graph is a spanning tree of $K^{(3)}_{n+1}$ if and only if
its bipartite representation is a spanning tree of the corresponding bipartite
graph, see Figure \ref{fig:Spanning} for an example.

\begin{figure}[ht]
\begin{center}
\includegraphics[width=\linewidth]{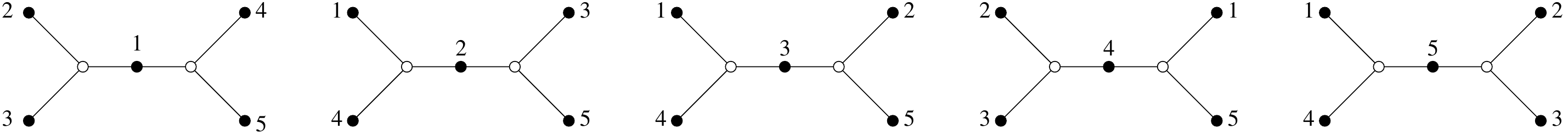}
\caption{Bipartite graph representation of the following 5 spanning trees of
$K^{(3)}_5$: $\{123,145\}$, $\{124,235\}$, $\{134,235\}$, $\{234,145\}$,
$\{145,235\}$. The graph $K^{(3)}_5$ has a total of 15 spanning
trees.}\label{fig:Spanning}
\end{center}
\end{figure}

Define the 
$(n+1)\times(n+1)$ matrix $A^{n+1}=(a_{ij})$ by:
$$
\forall\,i,j\,\in\{1,\dots,n+1\},\quad
a_{ij}=\sum_{k=1}^{n+1}y_{ijk}.
$$
Then, Masbaum and Vaintrob \cite{MasbaumVaintrob} prove that  Pfaffian of the
matrix $A$, obtained from the
matrix $A^{n+1}$ by removing the last line and column, is a signed
$y$-weighted sum over spanning trees of $K^{(3)}_{n+1}$:
\begin{equation}\label{equ:MasbaumVaintrob}
\Pf(A)=\sum_{T\in \T^{(3)}} \sgn(T)\prod_{(i,j,k)\in T}y_{ijk},
\end{equation}
where the product is over all hyper-edges of the spanning tree. We refer to the
original paper \cite{MasbaumVaintrob} for the definition
of $\sgn(T)$. A combinatorial proof of this result is given by Hirschman and
Reiner \cite{HirschmanReiner} and yet another proof using Grassmann variables is
provided by Abdesselam \cite{Abdesselam}. 

Using Sivasubramanian's result \cite{sivasubramanian}, spanning trees of
$K^{(3)}_{n+1}$ can be related to half-spanning trees of the (usual) complete
graph $K_{n+1}$. Sivasubramanian introduces an analog of the Pr\"ufer code for
3-graphs, allowing him to establish a bijection between spanning
trees of $K^{(3)}_{n+1}$ and pairs $(\gamma,M)$, where
$\gamma\in\{1,\dots,n+1\}^{\frac{n}{2}-1}$ and $M$ is a perfect matching of
the (usual) complete graph $K_{n}$ on the vertex set $\{1,\dots,n\}$. This 
bijection is also very clearly explained in the paper \cite{DeMierGoodall} by
Goodall and De Mier. Writing $\M(K_n)$ for the set of perfect matchings of
$K_n$, the set of spanning trees $\T^{(3)}$ can thus
be written as $\cup_{M\in\M(K_n)}\T^{(3)}(M)$, where $\T^{(3)}(M)$ consists of
the spanning trees corresponding to $M$ in the bijection. Equation
\eqref{equ:MasbaumVaintrob} then becomes:
\begin{equation}\label{equ:MasbaumVaintrob2}
\Pf(A)=\sum_{M\in \M(K_n)}\sum_{T\in \T^{(3)}(M)}
\sgn(T)\prod_{(i,j,k)\in T}y_{ijk}.
\end{equation}

\textsc{Example}. When $n+1=5$, spanning trees of $K^{(3)}_5$ are in bijection
with pairs $(\gamma,M)$, where $\gamma\in\{1,\dots,5\}$, and $M$ is a perfect
matching of $K_4$. Returning to the `Pr\"ufer code' of \cite{sivasubramanian},
one sees that the five spanning trees of Figure \ref{fig:Spanning} are in
bijection with the perfect matching $M=\{14,23\}$, and
$\gamma=1,\dots,\gamma=5$, respectively.

We now fix a perfect matching $M$ of $K_n$ and let $T_M$ be one of the
$(n+1)^{\frac{n}{2}-1}$ corresponding spanning trees of $K^{(3)}_{n+1}$. From
$T_M$, we construct a half-spanning tree of $K_{n+1}$ compatible with $M$ as
follows. By the bijection, for every hyper-edge
$ijk$ of $T_M$, exactly one of the pairs $ij,ik,jk$ belongs to $M$; without
loss of generality, let us assume it is $ij$ and that $i<j$. To this hyper-edge,
assign the edge configuration of $K_{n+1}$ consisting of the edge $ij$ and of
the edge $jk$. Repeating this procedure yields a half-tree of $K_{n+1}$
compatible with $M$. It seems that for different $\gamma$'s, the corresponding
half-spanning trees are different.

\textsc{Example}. Recall that Figure \ref{fig:Spanning} consists of the
spanning trees of $K^{(3)}_{5}$ corresponding to the perfect matching
$M=\{14,23\}$ through the `Pr\"ufer code'. Figure \ref{fig:Spanning1} pictures
the half-spanning trees of $K_5$ compatible with $M$ obtained by the above
construction. 

\begin{figure}[ht]
\begin{center}
\includegraphics[width=10cm]{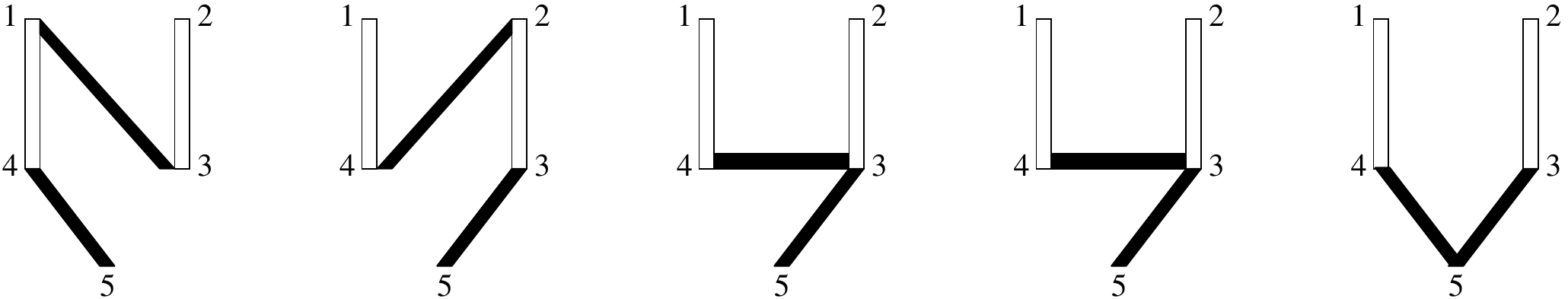}
\caption{Half-spanning trees assigned to spanning
trees of $K^{(3)}_{5}$ of Figure \ref{fig:Spanning}.}\label{fig:Spanning1}
\end{center}
\end{figure}

It is interesting to note that not all half-spanning trees compatible
with $M$ are obtained, and that they do not all satisfy Condition (C) of
Definition \ref{def:def0} (the third one does not satisfy it, see also Figure
\ref{fig:fig10}). A new family of half-spanning trees compatible with $M$ is
constructed; it has $(n+1)^{\frac{n}{2}-1}$ elements, and could probably be
characterized using the `Pr\"ufer code' and the construction of the
half-spanning trees.

This implies that the Pfaffian of the matrix $A$, written using the `Pr\"ufer
code' of \cite{sivasubramanian} as in Equation~\eqref{equ:MasbaumVaintrob2}, can
be expressed as a sum over all perfect matchings $M$ of $K_n$ of a
sum over a new family of half-spanning trees compatible with $M$. 

Now, by the anti-symmetry of the $y$-weights, the matrix $A^{n+1}$ constructed
from the $y$-weights is skew-symmetric and has column sum equal to 0. 
It thus satisfies the hypothesis of Theorem
\ref{thm:main}. Let $M_0$ be a fixed perfect matching of $K_{n}$.
Since the root $R$ consists of a single vertex $n+1$, the
theorem involves half-spanning trees instead of forests, and we denote by
$\T(M_0)$ the set of half-spanning trees compatible with 
$M_0$ of $K_n$, satisfying Condition (C) of Definition~\ref{def:def0}. By
Theorem \ref{thm:main}, we have:
\begin{equation*}\label{equ:thmmain}
\Pf(A)=\sum_{T\in\T(M_0)}\sgn(\sigma_{M_0(T\setminus M_0)})\prod_{e\in
T\setminus M_0}a_e. 
\end{equation*}
Replacing $a_e$ by its definition using $y$-variables, yields
\begin{equation*}
\Pf(A)=\sum_{T\in\T(M_0)}\sgn(\sigma_{M_0(T\setminus M_0)}\prod_{e\in
T\setminus M_0}(\sum_{k=1}^{n+1}y_{ek}).
\end{equation*}
This time, the Pfaffian of $A$ is written as a sum over half-spanning trees
compatible with a \emph{single} fixed perfect matching $M_0$, satisfying
Condition (C).
The term corresponding to a specific half-spanning tree is not a single
\emph{spanning tree} of $K^{(3)}_{n+1}$, but a sum over 3-subgraphs which are
not necessarily trees. To recover the form of \eqref{equ:MasbaumVaintrob}, there
must be cancellations involved.

\textsc{Example}. Take $M_0=\{14,23\}$, and consider the leftmost half-tree
compatible with $M_0$ of Figure \ref{fig:fig10}. Not taking into account signs,
its contribution to $\Pf(A)$ is $a_{42}a_{35}$. Replacing with the $y$-weights,
and using the fact that $y_{iij}=0$ gives a contribution of:
\begin{equation*}
(y_{421}+y_{423}+y_{425})(y_{351}+y_{352}+y_{354})=
y_{421}y_{351}+y_{421}y_{352}+\dots+y_{425}y_{354}.
\end{equation*}
Each term corresponds to a 3-subgraph of $K^{(3)}_5$, but not necessarily a
tree: as soon as a pair of triples of points has more than one index in common,
it is not a tree, for example $y_{425}y_{354}$.

Summarizing, using the `Pr\"ufer code' of \cite{sivasubramanian}, the
Pfaffian matrix-tree theorem of \cite{MasbaumVaintrob} can be written as a sum
over a new family of half-spanning trees, and to each half-spanning tree
corresponds a single spanning tree of $K^{(3)}_{n+1}$. 

When applied to 3-graphs,
our Pfaffian half-tree theorem \ref{thm:main} can be written as a sum over
half-spanning trees  compatible with a
\emph{single} perfect matching $M_0$, satisfying Condition~(C). To each
half-spanning tree corresponds a family of 3-subgraphs of $K^{(3)}_{n+1}$, not
all of which are trees, there are cancellations involved. The Pfaffian
half-tree theorem can be applied in the context of 3-graphs, but the result in
this case is not naturally related to spanning trees of 3-graphs; this theorem
takes its full meaning for (regular) graphs.


\bibliographystyle{alpha}
\bibliography{survey}

\begin{thebibliography}{KPW00}

\bibitem[Abd04]{Abdesselam}
A.~Abdesselam.
\newblock The {G}rassmann--{B}erezin calculus and theorems of the matrix-tree
  type.
\newblock {\em Adv. in Appl. Math.}, 33(1):51--70, 2004.

\bibitem[CD13]{CimasoniDuminil}
D.~{Cimasoni} and H.~{Duminil-Copin}.
\newblock The critical temperature for the {I}sing model on planar doubly
  periodic graphs.
\newblock {\em Electron. J. Probab.}, 18:no. 44, 1--18, 2013.

\bibitem[Cha82]{Chaiken}
S.~Chaiken.
\newblock A combinatorial proof of the all minors matrix tree theorem.
\newblock {\em SIAM J. Alg. Discr. Meth.}, 3(3):319--329, 1982.

\bibitem[CS12]{ChelkakSmirnov:ising}
D.~Chelkak and S.~Smirnov.
\newblock Universality in the {2D} {I}sing model and conformal invariance of
  fermionic observables.
\newblock {\em Inv. Math.}, 189:515--580, 2012.

\bibitem[dT13]{deTiliereCRSF}
B.~de~Tili\`ere.
\newblock Critical {I}sing model and cycle rooted spanning forests: an explicit
  correspondence.
\newblock {\em Comm. Math. Phys.}, 319(1):69--100, 2013.

\bibitem[{Fis}66]{Fisher}
M.~E. {Fisher}.
\newblock On the dimer solution of planar {I}sing models.
\newblock {\em J. Math. Phys.}, 7:1776--1781, 1966.

\bibitem[For93]{Forman}
R.~Forman.
\newblock {Determinants of Laplacians on graphs}.
\newblock {\em Topology}, 32(1):35--46, 1993.

\bibitem[GDM11]{DeMierGoodall}
A.~Goodall and A.~De~Mier.
\newblock Spanning trees of 3-uniform hypergraphs.
\newblock {\em Adv. in Appl. Math.}, 47(4):840--868, 2011.

\bibitem[HR04]{HirschmanReiner}
S.~Hirschman and V.~Reiner.
\newblock Note on the {P}faffian matrix-tree theorem.
\newblock {\em Graphs Combin.}, 20(1):59--63, 2004.

\bibitem[Kas67]{Kasteleyn}
P.~W. Kasteleyn.
\newblock Graph theory and crystal physics.
\newblock In {\em Graph {T}heory and {T}heoretical {P}hysics}, pages 43--110.
  Academic Press, London, 1967.

\bibitem[Ken09]{KenyonLectures}
R.~Kenyon.
\newblock Lectures on dimers.
\newblock In {\em Statistical mechanics}, volume~16 of {\em IAS/Park City Math.
  Ser.}, pages 191--230. Amer. Math. Soc., Providence, RI, 2009.

\bibitem[Ken11]{KenyonVectorBundle}
R.~Kenyon.
\newblock Spanning forests and the vector bundle {L}aplacian.
\newblock {\em Ann. Probab.}, 39(5):1983--2017, 2011.

\bibitem[{Kir}47]{Kirchhoff}
G.~{Kirchhoff}.
\newblock {U}eber die {A}ufl{\"o}sung der {G}leichungen, auf welche man bei der
  {U}ntersuchung der linearen {V}ertheilung galvanischer {S}tr{\"o}me
  gef{\"u}hrt wird.
\newblock {\em Ann. Physik}, 148:497--508, 1847.

\bibitem[KPW00]{KPW}
R.~Kenyon, J.~Propp, and D.~B. Wilson.
\newblock Trees and matchings.
\newblock {\em Electron. J. Combin.}, 7:Research Paper 25, 34 pp. (electronic),
  2000.

\bibitem[Li12]{Li:critical}
Z.~Li.
\newblock Critical temperature of periodic {I}sing models.
\newblock {\em Comm. Math. Phys.}, 315:337--381, 2012.

\bibitem[MV02]{MasbaumVaintrob}
G.~Masbaum and A.~Vaintrob.
\newblock A new matrix-tree theorem.
\newblock {\em Int. Math. Res. Not.}, 2002(27):1397--1426, 2002.

\bibitem[Siv06]{sivasubramanian}
S.~Sivasubramanian.
\newblock Spanning trees in complete uniform hypergraphs and a connection to
  extended r-{S}hi hyperplane arrangements.
\newblock {\em arXiv preprint math/0605083}, 2006.

\bibitem[Tem72]{Temperley}
H.~N.~V. Temperley.
\newblock The enumeration of graphs on large periodic lattices.
\newblock In {\em Combinatorics ({P}roc. {C}onf. {C}ombinatorial {M}ath.,
  {M}ath. {I}nst., {O}xford, 1972)}, pages 285--294. Inst. Math. Appl.,
  Southend, 1972.

\bibitem[TF61]{TemperleyFisher}
H.~N.~V. Temperley and M.~E. Fisher.
\newblock {Dimer problem in statistical mechanics-an exact result}.
\newblock {\em Philos. Mag.}, 6(68):1061--1063, 1961.

\end{thebibliography}

\end{document}